\numberwithin{equation}{section}
\definecolor{webgreen}{rgb}{0,.5,0}
\definecolor{webbrown}{rgb}{.8,0,0}
\definecolor{emphcolor}{rgb}{0.95,0.95,0.95}
\ifpdf \hypersetup{pdftex,
	bookmarksopen=true,
	bookmarksnumbered=true
} \else \hypersetup{dvips} \fi
\theoremstyle{plain}
  \newtheorem{teor}{Theorem}[section]
  \newtheorem{defin}[teor]{Definition}
  \newtheorem{prop}[teor]{Proposition}
  \newtheorem{lema}[teor]{Lemma}
  \newtheorem{coro}[teor]{Corollary}
\theoremstyle{remark}  
  \newtheorem{rem}[teor]{Remark}
\newtheoremstyle{hyp}{}{}{\itshape}{}{}{}{3pt}{}
\theoremstyle{hyp}
\DeclareMathAlphabet{\mathpzc}{OT1}{pzc}{m}{it}
\DeclareMathOperator*{\limess}{lim\, inf\, ess}
\DeclareMathOperator{\deri}{D}
\DeclareMathOperator{\dist}{dist}
\DeclareMathOperator{\Lp}{L}
\DeclareMathOperator{\trans}{T}
\DeclareMathOperator{\comp}{c} 
\DeclareMathOperator{\loc}{loc}
\DeclareMathOperator{\hol}{C}
\DeclareMathOperator{\tr}{tr}
\DeclareMathOperator{\sob}{W}
\DeclareMathOperator{\expo}{e}
\DeclareMathOperator{\sop}{supp}
\DeclareMathOperator{\inted}{\mathcal{I}}
\newcommand{\E}{\mathbbm{E}}
\newcommand{\set}{\mathcal{O}}
\newcommand{\dif}{\mathcal{L}}
\newcommand{\inte}{\mathcal{I}}
\newcommand{\ind}{\mathcal{D}}
\newcommand{\R}{\mathbbm{R}}
\newcommand{\uno}{\mathbbm{1}}
\newcommand{\der}{\mathrm{d}}
\newcommand{\F}{\mathbbm{F}}
\newcommand{\Pro}{\mathbbm{P}}
\newcommand{\BE}{\begin{equation}}
\newcommand{\EE}{\end{equation}}
\newcommand {\BA}{\begin{align}}
\newcommand{\EA}{\end{align}}
\newcommand{\eqdef}{\raisebox{0.4pt}{\ensuremath{:}}\hspace*{-1mm}=}
\newcommand{\defeq}{=\hspace*{-1mm}\raisebox{0.4pt}{\ensuremath{:}}}
\title{\Large{\bf HJB equations with gradient constraint associated with controlled jump-diffusion  processes}\footnote{\textbf{Funding:}  {This} study has been funded by the Russian Academic Excellence Project `5-100'.}}
\author{\large{\bf Mark Kelbert}\\ \small{\it Laboratory of Stochastic Analysis and its Applications}\\
\small{\it National Research University Higher School of Economics, Moscow, Russia}\\
\large{\bf Harold A. Moreno-Franco}\footnote{Corresponding author: hamoreno@uninorte.edu.co}\\
\small{\it Department  of Mathematics and Statistics}\\
\small{\it Universidad del Norte, Barranquilla, Colombia}\\
\small{\it and}\\
\small{\it Laboratory of Stochastic Analysis and its Applications}\\ 
\small{\it National Research University Higher School of Economics, Moscow, Russia}}
\date{}
\begin{document}	
\maketitle
\vspace{-0.5cm}
\begin{abstract}
\noindent 
In this paper, we guarantee  {the} existence and uniqueness (in  {the}  almost everywhere sense) of the solution to  a Hamilton-Jacobi-Bellman  (HJB) equation with gradient constraint and a partial integro-differential operator whose L\'evy measure has bounded variation. This type of equation arises in a singular control problem,  where the state process is a  {multidimensional} jump-diffusion with jumps of finite variation and infinite activity.  {We verify, by means of $\varepsilon$-penalized controls,  that the value function associated with this problem satisfies the aforementioned HJB equation.} 
\end{abstract}

\section{Introduction}

 {Our main goal is to study} the following HJB equation, 
\begin{equation}\label{p1}	
\max\{\Gamma u-h,|\deri^{1} u|-g\}=0,\  \text{a.e. in}\ \set,\quad \text{s.t.}\  u=0,\ \text{on}\  \overline{\set}_{\inted}\setminus\set,
\end{equation}
where   $\set$ is a convex, open and  bounded set  such that  $\set\subset\overline{\set}_{\inted}\subseteq \R^{d}$  and its boundary $\partial\set$ is of class $\hol^{3,\alpha'}$, with $\alpha'\in(0,1)$ fixed.   The set $\overline{\set}_{\inted}$ shall be given later on. The partial integro-differential operator  $\Gamma$ is defined by
\begin{align}\label{p6}
  \Gamma u(x)&=\dif u(x)-\inte u(x),
\end{align}  
with  
\begin{align}\label{p6.1}
\begin{split}
\dif u(x)&\eqdef -\tr[a(x)\deri^{2} u(x)]+\langle b(x),\deri^{1} u(x)\rangle+c(x)u(x),\\
\inte u(x)&\eqdef\int_{\R^{d}_{*}}[u(x+z)-u(x)]s(x,z)\nu(\der z),
\end{split} \qquad \text{for}\ x\in\set.
\end{align}
Here $|\cdot|$, $\langle\cdot,\cdot\rangle$ and $\tr[\,\cdot\,]$  {represent} the Euclidean norm, the inner product, and  the  {matrix trace}, respectively; $\deri^{1}u=(\partial_{1}u,\dots,\partial_{d}u)$, $\deri^{2}u=(\partial_{ij}u)_{d\times d}$,
 $h,c:\overline{\set}\longrightarrow\R$,   $g:\overline{\set}_{\inted}\longrightarrow\R$, $b:\overline{\set}\longrightarrow\R^{d}$, $a:\overline{\set}\longrightarrow\mathcal{S}(d)$,    with $\mathcal{S}(d)$ the set of $d\times d$ symmetric matrices,  $\nu$ is a Radon measure on $\R^{d}_{*}\eqdef\R^{d}\setminus\{0\}$  {satisfying}
 \begin{equation}\label{H4}
 \int_{\R^{d}_{*}}[|z|\wedge 1]\nu(\der z)\leq C_{\nu}, 
 \end{equation}
 for some finite positive constant $C_{\nu}$, and  $s:\overline{\set}\times\R^{d}\longrightarrow[0,1]$ is such that
\begin{equation}\label{p2}
\int_{\R^{d}_{*}}s(x,z)\uno_{\{x+z\notin\set\}}\nu(\der z)<\infty,\  \text{for}\ x\in\set. 
\end{equation}

 {The} notations concerning  function spaces  { that we have}  used in the paper  are standard   and     are discussed in Subsection \ref{notation}.

The HJB equation \eqref{p1} when $\Gamma=\dif$  was introduced by  Evans in 1979 \cite{evans}. {Under some regularity  assumptions on the coefficients of \eqref{p1}, and $\dif$ satisfying the elliptic property},  he {showed} that the unique solution to this problem belongs to  $\sob^{1,\infty}(\set)\cap\sob^{2,p}_{\loc}(\set)$,  for each $p\in[1,\infty)$. Shortly  {afterwards,} Wiegner \cite{wieg} proved that this solution is in $\hol^{1,1}(\set)$.  Later on, Ishii and  Koike \cite{ishii}  considered this problem with a gradient constraint more general  than  Evans proposed in \cite{evans}. They verified that the solution to their HJB equation is in $\sob^{2,\infty}(\set)$.   {Then,}   Hynd \cite{hynd2} studied the problem with a convex gradient constraint and showed that the solution to this problem is in a viscosity sense and belongs to  $\hol^{1,\alpha}_{\loc}(\set)\cap\hol^{0,1}(\set)$, for  $\alpha\in(0,1)$.

Recently, Moreno-Franco \cite{moreno} analysed the HJB equation \eqref{p1}  when the domain set is a ball $B_{R}(0)\subset\
\R^{d}$,  the coefficients of the partial integro-differential operator $\Gamma$ are constant, $s=g=1$, $c=q$, with $q$ {being} a positive constant large enough, and the L\'evy measure $\nu$ has a density $\kappa\in\hol^{0,\alpha'}(\R^{d}_{*})$ with respect to the Lebesgue measure  $\der z$ such that $\nu(\R^{d}_{*})<\infty$. In this case, assuming that $h\in\hol^{2}(\overline{B_{R}(0)})$ is non-negative, $\nu$ is such that $\int_{\R^{d}_{*}}|z|\nu(\der z)<\infty$, and using  {PDEs} and probabilistic methods, the author proved the equation \eqref{p1} has a unique solution  in $\hol^{0,1}(\overline{B_{R}(0)})\cap\sob^{2,p}_{\loc}(B_{R}(0))$ a.e., for each $p\in(d,\infty)$.  {It was also} shown that there is a relationship  between the HJB equation \eqref{p1} on the whole space $\R^{d}$ and a singular control problem, when the controlled process is a L\'evy process, whose components are a $d$-dimensional standard Brownian motion (SBM) with  drift and a Poisson compound process. 

Notice that the HJB equation \eqref{p1} with the operator $\Gamma$ defined as in \eqref{p6} is more general than in \cite{moreno}. The L\'evy measure $\nu$ can  satisfy $ \nu(\R^{d}_{*})=\infty$ and it is not required that $\nu$ has a density $\kappa$ with respect to the Lebesgue measure $\der z$.    {This} type of HJB equation is also related to a singular control problem  when the state process is a jump-diffusion process $X=\{X_{\mathpzc{t}}:\mathpzc{t}\geq0\}$ (see Eq. \eqref{esd1}) with  infinitesimal generator of the form
\begin{equation}\label{p.13.1}
\tr[a\deri^{2} u]-\langle b,\deri^{1} u\rangle+\int_{\R^{d}_{*}}[u(\cdot+z)-u]s(\cdot,z)\nu(\der z),\ \text{on}\  \overline{\set}.
\end{equation}
The last term in \eqref{p.13.1} corresponds  to the infinitesimal generator of a jump process, whose \textit{jump size and rate} are given by $z\in\R^{d}_{*}$, and $s(x,z)$, respectively. The jump rate of the process $X$ depends on its position at time $t$. For more detail about this problem, see Subsection \ref{Prob.1}.

\subsection*{Assumptions and main results}\label{ass1}
 The following assumptions will  {henceforth} be  imposed:

\begin{enumerate}
	\item[(A1)]Assume that    $h,c,a_{ij},b_{i},c\in\hol^{1,\alpha'}(\overline{\set})$, with $\alpha'\in(0,1)$ fixed, $g\in\hol^{2}(\overline{\set})\cap\hol^{1}(\overline{\set}_{\inted})$ and $||h||_{\hol^{1,\alpha'}(\overline{\set})}$, $ ||a_{ij}||_{\hol^{1,\alpha'}(\overline{\set})}$, $||b_{i}||_{\hol^{1,\alpha'}(\overline{\set})}$, $||c||_{\hol^{1,\alpha'}(\overline{\set})}$, $||g||_{\hol^{2}(\overline{\set})}$ and $||g||_{\hol^{1}(\overline{\set}_{\inted})}$     are bounded by some finite positive constant $\Lambda$. 
	
	\item[(A2)] The functions $h$, $g$ and $c$ are such that $h\geq0$, $c>0$  on $\overline{\set}$, and  $g\geq0$ on $\overline{\set}_{\inted}$. 
	
	\item[(A3)] The differential part of the operator $\Gamma$ is   \textit{strictly elliptic}; i.e., there exists a real number $\theta>0$  such that $\langle a(x) \zeta,\zeta\rangle\geq \theta|\zeta|^{2},\ \text{for all}\  x\in\overline{\set},\ \zeta\in \R^{d}$.
	
	\item[(A4)]  Finally, we assume that $\nu$ is a Radon measure on $\R^{d}_{*}$  satisfying \eqref{H4} and  {$s\in \hol^{1,\alpha'}(\overline{\set}\times\R^{d})$ is such that \eqref{p2} holds}.   
\end{enumerate}

 {Before introducing the main results of the paper, let us define the support $\overline{\set}_{\inted}$ of the operator $\inted$. Consider the L\'evy kernel $M_{s}(x,B)=\int_{z\in B}s(x,z)\nu(\der z)$, where $x\in\set$ and $B$ a Borel measurable set of $\R^{d}_{*}$. Then,} 
\begin{align}\label{p3}
\overline{\set}_{\inted}\eqdef\overline{\bigcup_{x\in \overline{\set}}\{x+[\R^{d}\setminus \mathcal{Z}_{\inted}(x)]\}}, 
\end{align} 
where   $\mathcal{Z}_{\inted}(x)\eqdef\{z'\in\R^{d}_{*}:M_{s}(x,B_{\epsilon}(z'))=0,\ \text{for some}\ \epsilon\in(0,|z'|)\}$; see \cite[Definition 2.3.10]{garroni}. 
The set $\mathcal{Z}_{\inted}(x)$ is called the {\it zero-jump set}. Notice that $ \overline{\set} \subset \overline{\set}_{\inted}$ and $\overline{\set}= \overline{\set}_{\inted}$ if $s(x,z)=0$, for all $(x,z)\in \overline{\set} \times\R^{d}_{*}$ such that $x+z\notin\overline{\set}$.

Without loss of generality consider $\overline{\set}\subset\overline{\set}_{\inted}\subset\R^{d}$ from now on. Taking the operator $\Gamma$ as in \eqref{p6} and under  {Assumptions} (A1)--(A4), the main {goal} obtained in this document  is  as follows:
\begin{teor}\label{princ1.0.1}
	For each $p\in(d,\infty)$, there exists a unique non-negative solution $u$ to the HJB equation \eqref{p1} in the space  $\hol^{0,1}(\overline{\set})\cap\sob^{2,p}_{\loc}(\set)$.
\end{teor}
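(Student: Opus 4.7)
The plan is to follow the penalization scheme indicated in the introduction. First, for each $\varepsilon\in(0,1)$ I would establish existence and uniqueness of a classical solution $u^{\varepsilon}\in\hol^{2,\alpha}(\overline{\set})$ to the NIDD problem \eqref{p13.0}. Since $\Gamma$ has a zero-order term with $c>0$ and the integral kernel is bounded by hypothesis (H\ref{h3}), the linear part satisfies a comparison principle; the nonlinearity $\psi_{\varepsilon}(|\deri^{1}u|^{2}-g^{2})$ is convex and monotone, hence a Leray–Schauder/continuity method applied in the scale $\hol^{1,\alpha}(\overline{\set})$ gives a unique $\hol^{2,\alpha}$-solution. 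The non-local term $\inte u$ is handled as a compact perturbation because $s(\cdot,z)\in\hol^{1,\alpha'}$ and $\nu$ has bounded variation away from the origin and integrates $|z|$ near the origin, so $\inte:\hol^{1,\alpha}(\overline{\set}_{\inted})\to\hol^{0,\alpha}(\overline{\set})$ is bounded.

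Next I would derive uniform estimates on $\{u^{\varepsilon}\}_{\varepsilon}$. A global $L^{\infty}$-bound follows from the maximum principle by comparison with the solution of $\Gamma v=h$, $v=0$ on $\overline{\set}_{\inte}\setminus\set$. The crucial step is a uniform Lipschitz bound $\|\deri^{1}u^{\varepsilon}\|_{\Lp^{\infty}(\set)}\leq C$ obtained via a Bernstein-type argument: differentiate \eqref{p13.0}, multiply by $\partial_{k}u^{\varepsilon}$, and sum to get an equation for $w\eqdef|\deri^{1}u^{\varepsilon}|^{2}$. The convexity of $\psi_{\varepsilon}$ produces a non-negative term $2\psi_{\varepsilon}'(w-g^{2})|\deri^{2}u^{\varepsilon}|^{2}$, while the non-local contribution $\int[w(\cdot+z)-w]s(\cdot,z)\nu(\der z)$ is controlled using the Lipschitz character of $s$, the bound $s\leq 1$, and integrability of $|z|\nu(\der z)$ near the origin. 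Boundary Lipschitz estimates are obtained using barriers on $\partial\set$ (smooth and convex). Together these give $|\deri^{1}u^{\varepsilon}|\leq C$ independently of $\varepsilon$. Plugging this bound back into \eqref{p13.0} yields a uniform bound on $\psi_{\varepsilon}(|\deri^{1}u^{\varepsilon}|^{2}-g^{2})$ in $\Lp^{\infty}$, which by interior Calderón–Zygmund theory for the linear equation $\dif u^{\varepsilon}=h-\psi_{\varepsilon}(\cdot)+\inte u^{\varepsilon}$ gives $\|u^{\varepsilon}\|_{\sob^{2,p}(\set')}\leq C_{\set'}$ for every $\set'\Subset\set$ and every $p\in(d,\infty)$.

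With these bounds in hand, a diagonal subsequence converges in $\hol^{0,1}(\overline{\set})$ and weakly in $\sob^{2,p}_{\loc}(\set)$ to some non-negative $u$. Passing to the limit in the penalized equation: on one hand, $\dif u^{\varepsilon}-\inte u^{\varepsilon}\to \Gamma u$ weakly in $\Lp^{p}_{\loc}$; on the other, the uniform bound on $\psi_{\varepsilon}(|\deri^{1}u^{\varepsilon}|^{2}-g^{2})$ together with the defining properties \eqref{p12.1} forces $|\deri^{1}u|^{2}\leq g^{2}$ a.e.\ in $\set$. Combining these two facts and using that $\psi_{\varepsilon}\geq 0$ produces $\Gamma u\leq h$ and, on the set $\{|\deri^{1}u|<g\}$, the equality $\Gamma u=h$, which is exactly \eqref{p1}. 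Uniqueness follows from a standard comparison argument: if $u_{1},u_{2}$ are two such solutions, the positive part $(u_{1}-u_{2})^{+}$ attains its maximum at some $x_{0}$; if $x_{0}\in\set$, then at $x_{0}$ one has $|\deri^{1}u_{1}|=|\deri^{1}u_{2}|$, $\deri^{2}(u_{1}-u_{2})\leq 0$, and the non-local increment is $\leq 0$, so $c(x_{0})(u_{1}-u_{2})(x_{0})\leq 0$ and hence $u_{1}\leq u_{2}$; symmetry gives equality.

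The main technical obstacle I anticipate is the uniform Lipschitz estimate for $u^{\varepsilon}$, specifically controlling the non-local term in the Bernstein computation: unlike the finite-measure case treated in \cite{moreno}, here $\nu$ may be infinite near the origin, so one must exploit $\int_{|z|<1}|z|\,\nu(\der z)<\infty$ together with the $\hol^{1,\alpha'}$-regularity of $s(\cdot,z)$ to write the non-local contribution to the equation for $w=|\deri^{1}u^{\varepsilon}|^{2}$ as a bounded lower-order perturbation that can be absorbed via the maximum principle.
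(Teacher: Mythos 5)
Your overall strategy---penalize, derive $\varepsilon$-independent estimates, pass to the limit, prove uniqueness by comparison---is the same as the paper's, and several of your intermediate estimates (the $L^{\infty}$ bound via comparison with $\Gamma v=h$, the Bernstein-type gradient estimate with barriers on the boundary) match the paper's Lemmas \ref{Lb1}, \ref{lemfrontera1.0}, and \ref{gr1}. However, two steps as written contain genuine gaps.

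\paragraph{The uniform bound on the penalization term.}
You assert that the uniform Lipschitz bound $|\deri^{1}u^{\varepsilon}|\leq C_{3}$, ``plugged back into \eqref{p13.0}'', yields a uniform $L^{\infty}$ bound on $\psi_{\varepsilon}(|\deri^{1}u^{\varepsilon}|^{2}-g^{2})$. This does not follow. First, $\psi_{\varepsilon}(r)=\frac{r-\varepsilon}{\varepsilon}\to\infty$ as $\varepsilon\to 0$ for any fixed $r>0$, so the gradient bound alone gives only $\psi_{\varepsilon}(\cdot)\leq C_{3}^{2}/\varepsilon$, which blows up. Second, isolating $\psi_{\varepsilon}(\cdot)=h-\Gamma u^{\varepsilon}$ produces a right-hand side containing $\tr[a\deri^{2}u^{\varepsilon}]$, which is exactly the quantity one does not yet control. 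The paper addresses this with Lemmas \ref{gr3}--\ref{cotaphi}: one studies $\phi=\xi\psi_{\varepsilon}(|\deri^{1}u^{\varepsilon}|^{2}-g^{2})$ for a cut-off $\xi$, derives a differential inequality for $\phi$ that produces a sign-favourable term $\psi'_{\varepsilon}(\cdot)\theta\xi|\deri^{2}u^{\varepsilon}|^{2}$, and applies the maximum principle at the interior maximum of $\phi$; crucially, when $|\deri^{1}u^{\varepsilon}|^{2}-g^{2}\geq 2\varepsilon$ one has $\psi'_{\varepsilon}(\cdot)=1/\varepsilon$, which allows the $1/\varepsilon$'s to cancel and gives an $\varepsilon$-independent bound on $|\deri^{2}u^{\varepsilon}|$ at that point; then \eqref{p9.0} converts this into a bound on $\psi_{\varepsilon}(\cdot)$. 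None of this is ``plugging back in''; it is the main new technical ingredient beyond the gradient estimate, and without it your local Calder\'on--Zygmund step does not produce $\varepsilon$-independent $W^{2,p}_{\loc}$ bounds.

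\paragraph{Uniqueness.}
Your comparison argument has a gap. At an interior maximum $x_{0}$ of $u_{1}-u_{2}$ you do get $\deri^{1}u_{1}(x_{0})=\deri^{1}u_{2}(x_{0})$, $\tr[a\deri^{2}(u_{1}-u_{2})](x_{0})\leq 0$, and a non-positive non-local increment. But to conclude $c(x_{0})(u_{1}-u_{2})(x_{0})\leq 0$ you need $\Gamma u_{1}(x_{0})-\Gamma u_{2}(x_{0})\leq 0$; the HJB equation only gives $\Gamma u_{i}\leq h$ for both $i$, which one cannot subtract. One must know that $x_{0}$ lies in the region $\{\Gamma u_{2}=h\}$, i.e.\ that $|\deri^{1}u_{2}|<g$ near $x_{0}$, and the information $|\deri^{1}u_{1}(x_{0})|=|\deri^{1}u_{2}(x_{0})|\leq g(x_{0})$ does not rule out both gradients saturating the constraint. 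The paper resolves this by the standard dilation trick: set $f=(1-\rho)u_{1}-u_{2}$ with $\rho>0$ small; at the maximum $x_{1}^{*}$ of $f$ one then has $|\deri^{1}u_{2}(x_{1}^{*})|=(1-\rho)|\deri^{1}u_{1}(x_{1}^{*})|<g(x_{1}^{*})$ strictly, forcing $\Gamma u_{2}=h$ in a neighbourhood, and the comparison goes through. Moreover, since $u_{1},u_{2}\in\sob^{2,p}_{\loc}(\set)$ are not $C^{2}$, the pointwise second-order information at $x_{1}^{*}$ has to come from Bony's maximum principle ($\limess$ as $x\to x_{1}^{*}$), which you should invoke explicitly.

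As a minor remark, the convergence $\inte u^{\varepsilon_{\kappa}}\to\inte u$ needed to pass the non-local term to the limit in the weak formulation also deserves justification (the paper devotes a separate lemma to it, splitting the L\'evy measure into $\{|z|<r_{0}\}$, $\{r_{0}\leq|z|<1\}$, and $\{|z|\geq 1\}$), though this is routine given the uniform $\hol^{0,1}$ and $\hol^{1}_{\loc}$ convergence.
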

The solution $u$ to the HJB equation \eqref{p1} is established in  {the} almost everywhere sense in line with \cite{moreno}. To prove  Theorem \ref{princ1.0.1}; see Section \ref{pp2},  we will employ a penalization technique, which  has been used  {by} \cite{evans, hynd, hynd2, Hynd3, ishii, moreno, soner, wieg},  when the operator $\Gamma$ has only the elliptic differential part $\dif$ or when the L\'evy measure of its integral part $\inted$ is finite.  Considering the non-linear partial integro-differential Dirichlet (NPIDD) problem 
\begin{equation}\label{p13.0}
\Gamma  u^{\varepsilon}+ \psi_{\varepsilon}(|\deri^{1} u^{\varepsilon}|^{2}- g^{2})=h,\ \text{in}\ \set,\ \quad\text{s.t.}\ u^{\varepsilon}=0,\ \text{on}\ \overline{\set}_{\inted}\setminus\set  ,
\end{equation}
where the \textit{penalizing function} $\psi_{\varepsilon}:\R\longrightarrow\R$, with $\varepsilon\in(0,1)$, belongs to $\hol^{\infty}(\R)$ and is determined as
\begin{equation}\label{p12.1}
\begin{split}
\psi_{\varepsilon}(r)&=0,\ r\leq0,\quad 
\psi_{\varepsilon}(r)>0,\ r>0,\\ 
\psi_{\varepsilon}(r)&=\frac{r-\varepsilon}{\varepsilon},\  r\geq2\varepsilon,\quad
\psi_{\varepsilon}'(r)\geq0,\quad \psi_{\varepsilon}''(r)\geq0,
\end{split}
\end{equation}
we first guarantee  {the} existence and uniqueness of the classical  solution  {$u^{\varepsilon}$} to the NPIDD problem \eqref{p13.0}, with  $\Gamma$ as in \eqref{p6}. Once this is done, we establish  uniform estimates of  {the sequence $\{u^{\varepsilon}\}̣_{\varepsilon\in(0,1)}$} that allow us to pass to  the limit as $\varepsilon\rightarrow0$,  in a weak sense in \eqref{p13.0}, which leads to the existence and regularity of the solution to the HJB equation  \eqref{p1}. 

Under Assumptions (A1)--(A4), the other main result obtained in the paper is as follows:

\begin{prop}\label{princ1.0}
	For each $\varepsilon\in(0,1)$, there exists a unique non-negative solution $u^{\varepsilon}$ to the NPIDD problem  \eqref{p13.0} in the space $\hol^{3,\alpha'}(\overline{\set})$.
\end{prop}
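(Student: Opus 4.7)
The plan is to construct $u^{\varepsilon}$ as a fixed point of a map that freezes the non-local operator $\inte$ and reduces (\ref{p13.0}) to a local quasilinear Dirichlet problem. Given $w\in\hol^{2,\alpha'}(\overline{\set})$ extended by $0$ on $\overline{\set}_{\inted}\setminus\set$, consider the auxiliary problem
\[
  \dif u+\psi_{\varepsilon}(|\deri^{1}u|^{2}-g^{2})=h+\inte w\quad\text{in }\set,\qquad u=0\ \text{on }\partial\set.
\]
Under (H\ref{h3}), a direct computation using (\ref{H4}) and the $\hol^{1,\alpha'}$-regularity of $s(\cdot,z)$ shows that $\inte$ is bounded from $\hol^{2,\alpha'}(\overline{\set})$ into $\hol^{1,\alpha'}(\overline{\set})$. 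Combined with $h\in\hol^{2}(\overline{\set})$, the right-hand side then lies in $\hol^{1,\alpha'}(\overline{\set})$. Since $a,b,c\in\hol^{2}(\overline{\set})$ with $a$ uniformly elliptic by (H\ref{h2}), and $\psi_{\varepsilon}$ is smooth, convex and non-decreasing, classical quasilinear elliptic theory (Ladyzhenskaya--Uraltseva, followed by a Schauder bootstrap that differentiates the PDE and applies $\hol^{1,\alpha'}$-Schauder estimates to the linearised equation) furnishes a unique classical solution $u=T(w)\in\hol^{3,\alpha'}(\overline{\set})$, so $T$ is well defined.

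To apply Schauder's fixed-point theorem in $\hol^{2,\alpha'}(\overline{\set})$, I would establish uniform bounds on $T(w)$ in three stages: an $L^{\infty}$-bound by comparison with a constant supersolution built from $\|h\|_{\infty}/\inf_{\set}c$, exploiting (\ref{H3}); a Lipschitz bound via the Bernstein-type device underlying Evans's penalization method (differentiate the equation and apply the non-local maximum principle to $|\deri^{1}u|^{2}$, using convexity of $\psi_{\varepsilon}$; cf.\ \cite{evans, moreno}); and finally $\hol^{2,\alpha'}$-estimates from Schauder theory applied to the linearised equation. These yield a closed ball $\mathcal{B}\subset\hol^{2,\alpha'}(\overline{\set})$ invariant under $T$, with $T:\mathcal{B}\to\mathcal{B}$ compact via the embedding $\hol^{3,\alpha'}\hookrightarrow\hol^{2,\alpha'}$ and continuous via stability of the Dirichlet problem together with the bound $\|\inte(w_{1}-w_{2})\|_{\hol^{1,\alpha'}}\leq C\|w_{1}-w_{2}\|_{\hol^{2,\alpha'}}$, producing the desired solution $u^{\varepsilon}\in\hol^{3,\alpha'}(\overline{\set})$.

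For uniqueness, set $w=u^{\varepsilon}_{1}-u^{\varepsilon}_{2}$ and linearise the gradient nonlinearity by the mean-value theorem to obtain
\[
  -\tr[a\deri^{2}w]+\langle\tilde b,\deri^{1}w\rangle+cw-\inte w=0\ \text{in }\set,\qquad w=0\ \text{on }\overline{\set}_{\inted}\setminus\set,
\]
for some bounded $\tilde b$ (by monotonicity of $\psi_{\varepsilon}$). Since $c>0$ on $\set$ by (\ref{H3}) and $s,\nu\geq0$, evaluating at an interior extremum of $w$ gives $\inte w\leq0$ there, so the non-local maximum principle forces $w\equiv0$. Non-negativity of $u^{\varepsilon}$ follows from the same comparison argument applied to the zero subsolution, which is admissible because $h\geq0$ and $\psi_{\varepsilon}(-g^{2})=0$. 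The main obstacle is producing the invariant $\hol^{2,\alpha'}$-ball for $T$: the gradient-dependent nonlinearity $\psi_{\varepsilon}(|\deri^{1}u|^{2}-g^{2})$ couples to a non-local source $\inte w$ whose Hölder norm is controlled only by $\|w\|_{\hol^{2,\alpha'}}$ on the enlarged set $\overline{\set}_{\inted}$, so arranging the Schauder bootstrap so that the estimates close---without invoking bounds that would later degenerate as $\varepsilon\to0$, since Proposition \ref{princ1.0} is used to pass to the limit in Theorem \ref{princ1.0.1}---is where the technical work is concentrated.
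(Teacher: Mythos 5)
Your scheme is a genuinely different decomposition from the paper's. You freeze the non-local term, so $T(w)$ solves the purely local quasilinear Dirichlet problem $\dif u+\psi_{\varepsilon}(|\deri^{1}u|^{2}-g^{2})=h+\inte w$ on $\set$, and you invoke Ladyzhenskaya--Uraltseva/Evans quasilinear theory together with Schauder's fixed point theorem in $\hol^{2,\alpha'}(\overline{\set})$. The paper does the opposite: it freezes the nonlinearity $\psi_{\varepsilon}(|\deri^{1}w|^{2}-g^{2})$ on the right-hand side and keeps the full integro-differential operator $\Gamma$ on the left, so each iterate solves the \emph{linear} integro-differential Dirichlet problem \eqref{p13.0.0}, for which existence and $\hol^{2,\alpha'}$-regularity come directly from Garroni--Menaldi \cite[Thm.\ 3.1.12]{garroni}, and it applies Schaefer's theorem in $\hol^{1,\alpha'}(\overline{\set})$, which only requires boundedness of the set $\widetilde{\mathcal{A}}$ of $\rho$-fixed points rather than an invariant ball. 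The paper's choice keeps the auxiliary problem linear, so solvability is handed to an external reference, and Schaefer sidesteps exactly the obstacle you flag at the end (closing the Schauder estimates so that a ball is invariant). Your choice confines the new (non-local) difficulty to a passive source term, at the cost of having to resolve a quasilinear gradient-dependent problem at every iterate. Both routes then share the a priori estimates of Lemmas \ref{Lb1}, \ref{gr1}, \ref{gr2} and the comparison principle of Lemma \ref{supsuper}; your mean-value-theorem linearisation for uniqueness and non-negativity is the paper's super/sub-solution argument in disguise.

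One point you should tighten: the claim that $\inte$ maps $\hol^{2,\alpha'}(\overline{\set})$ (zero-extended) boundedly into $\hol^{1,\alpha'}(\overline{\set})$ is delicate, because the zero extension of $w$ across $\partial\set$ is in general only Lipschitz (the normal derivative jumps), so Lemma \ref{g1}.(iv), which requires $w\in\hol^{1}(\overline{\set}_{\inted})$, does not apply; Lemma \ref{g1}.(iii) gives only $\inte w\in\hol^{0,1}(\overline{\set})$, one derivative short. The paper's bootstrap to $\hol^{3,\alpha'}$ faces the same subtlety, but in your scheme this estimate also drives the continuity of $T$ and the construction of the invariant ball, so it carries more weight. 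A safer variant of your argument would measure $\inte w$ only in $\hol^{0,\alpha'}(\overline{\set})$ (which does follow from Lemma \ref{g1}.(iii)), run the fixed point in $\hol^{1,\alpha'}(\overline{\set})$ as the paper does so that $T(w)\in\hol^{2,\alpha'}(\overline{\set})$, and bootstrap to $\hol^{3,\alpha'}$ only once the fixed point $u^{\varepsilon}$ is in hand.
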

 Although the NPIDD problem \eqref{p13.0} is a tool to guarantee the existence of the  solution to the HJB equation \eqref{p1}, this turns out to be a problem of interest itself  because, previously to this paper, we find few references related to this class of problems. Say, paper \cite{moreno} analyses  the NPIDD problem \eqref{p13.0} when the L\'evy measure $\nu$ is finite on $\R^{d}_{*}$, and \cite{taira} studies a degenerate Neumann problem for quasi-linear elliptic integro-differential operators when  the L\'evy measure $\nu$ has unbounded variation, i.e., $\int_{\R^{d}_{*}}[|z|^{2}\wedge1]\nu(\der z)<\infty$,  and $s$ satisfies $s(x,z)=0$, for $(x,z)\in\overline{\set}\times\R^{d}_{*}$ such that  $x+z\notin\overline{\set}$.  {This} type of problem  can  {also} be  related to an absolutely continuous optimal control problem when the controlled process is a jump-diffusion with jump measure of finite variation; see Section \ref{HJBPro}.

To finalize this part, let us make  some comments about the assumptions mentioned {in the beginning of this subsection}. Under (A1), (A3), (A4) and the fact that the boundary $\partial\set $ is of class $\hol^{3,\alpha'}$, we ensure the existence and uniqueness of the classical solution $\mathpzc{u}$ to the linear partial integro-differential Dirichlet (LPIDD) problem in \eqref{p13.0.0} when $w\in\hol^{1,\alpha'}(\overline{\set})$; see \cite[Thm. 3.1.12]{garroni}.  Assumptions (A1),  (A2), (A4) and that $\set$ is a bounded convex set are required to show some \textit{\`a priori} estimates of the solution $u^{\varepsilon}$ to the NPIDD problem \eqref{p13.0}, which must be independent of $\varepsilon$; see Lemmas \ref{Lb1}--\ref{gr1}. Since $h\geq0$, $c>0$ on $\overline{\set}$ and using Lemma \ref{supsuper}, it can be verified that $u^{\varepsilon}$ is the unique non-negative solution to the NPIDD problem \eqref{p13.0}; see Subsection \ref{pp1}. Finally, {once again making use of} $c>0$ on $\overline{\set}$, it is  {proven} that the solution to the HJB equation \eqref{p1} is unique; see Subsection \ref{proofHJB1}.  

 {The rest of this} document is organized as follows: Section \ref{apr1} is devoted to  {prove}  the existence and uniqueness  of  the solution  to the NPIDD problem \eqref{p13.0}.   {First}, some properties of the integral operator $\inte$ and some \textit{\`a priori} estimates of the solution to the NPIDD problem \eqref{p13.0} are studied.  Afterwards,  using Lemmas \ref{g1}, \ref{Lb1}, \ref{gr1}, \ref{gr2}, and the Schaefer fixed point Theorem; see \cite[Thm. 4, p. 539]{evans2},  it is  {proven} that the classical solution $u^{\varepsilon}$ to the NPIDD problem \eqref{p13.0} exists and is unique; see Subsection \ref{pp1}. Then, in Section \ref{pp2}, by Lemmas \ref{Lb1}, \ref{gr1}, \ref{cotaphi}, \ref{lemafrontera4},  using Arzel\`a-Ascoli Theorem and the reflexivity of $\Lp^{p}_{\loc}(\set)$; see \cite[Thm. 7.25, p. 158 and Thm. 2.46, p. 49, respectively]{rudin, adams}, we extract a convergent sub-sequence of $\{u^{\varepsilon}\}_{\varepsilon\in(0,1)}$, whose limit is the solution to the HJB equation \eqref{p1}; see Subsection \ref{proofHJB1}.  {In the following subsection, we present } the singular control problem that is related to the HJB equation \eqref{p1}. The probabilistic arguments of this part are given in  Section \ref{HJBPro}.  Finally, we draw  {our}  conclusions and discuss possible extensions of this paper.

\subsection{Probabilistic interpretation}\label{Prob.1} 
Let $W=\{W_{\mathpzc{t}}:\mathpzc{t}\geq0\}$ and $N$ be a $d$-dimensional SBM and a Poisson random measure on $(\mathcal{S}\times[0,\infty),\mathcal{B}(\mathcal{S})\times\mathcal{B}([0,\infty)),\eta(\der\rho,\der z)\times\der \mathpzc{t})$, with $\mathcal{S}\eqdef [0,1]\times\R^{d}$ and $\eta(\der\rho,\der z)=\der\rho\nu(\der z)$, respectively, which are  defined on a complete probability space  $(\Omega,\mathcal{F},\Pro)$. We assume that $W$ and $N$ are independent. Let $\F=\{\mathcal{F}_{\mathpzc{t}}\}_{\mathpzc{t}\geq0}$ be the filtration  generated by $W$ and $N$.  
We assume furthermore that the filtration $\F$ is completed with the null sets of $\Pro$. The uncontrolled stochastic process $X=\{X_{\mathpzc{t}}:\mathpzc{t}\geq0\}$ is governed by the stochastic differential equation (SDE)
\begin{equation}\label{esd1}
X_{\mathpzc{t}}=\tilde{x}-\int_{0}^{\mathpzc{t}} \tilde{b} (X_{\mathpzc{s}})\der \mathpzc{s}+\int_{0}^{\mathpzc{t}}\sigma(X_{\mathpzc{s}})\der  W_{\mathpzc{s}}+\int_{0}^{\mathpzc{t}}\der  J_{\mathpzc{s}},\  \mathpzc{t}>0,
\end{equation}
where $\tilde{x}\in\set$, $ \tilde{b} :\R^{d}\longrightarrow\R^{d}$, and   $\sigma:\R^{d}\longrightarrow\R^{d\times d}$. The jump process $ J $ is defined  by
\begin{align}\label{esd2.1}
J_{\mathpzc{t}}&=\int_{0}^{\mathpzc{t}}\int_{\mathcal{S}_{1}}z\uno_{\{\rho\in[0,  s (X_{\mathpzc{s}-},z)]\}} \widetilde{N}(\der \rho,\der z,\der \mathpzc{s})+\int_{0}^{\mathpzc{t}}\int_{\mathcal{S}\setminus\mathcal{S}_{1}}z\uno_{\{\rho\in[0,  s (X_{\mathpzc{s}-},z)]\}} N(\der \rho,\der z,\der \mathpzc{s}),
\end{align}
with $s:\R^{d}\times\R^{d}\longrightarrow[0,1]$, $\mathcal{S}_{1}\eqdef\{(\rho,z)\in\mathcal{S}:|z|\in(0,1)\}$, and $\widetilde{N}(\der \rho,\der z,\der \mathpzc{t})\eqdef N(\der \rho,\der z,\der \mathpzc{t})-\eta(\der \rho,\der z)\der \mathpzc{t}$ is the  compensated Poisson random  measure with intensity $\eta(\der\rho,\der z)\der\mathpzc{t}$. For each $\tilde{x}\in\set$, $\Pro_{\tilde{x}}$ represents the probability law of $X$ when it starts at $\tilde{x}$, and $\E_{\tilde{x}}$ is the expected value associated with $\Pro_{\tilde{x}}$.

In addition to  (A1)--(A4), we need to add other assumption on the whole space $\R^{d}$ in such a way that the SDE \eqref{esd1} has a unique c\`adl\`ag adapted solution $X$. This assumption will only be  used here and in Section \ref{HJBPro}. 
\begin{enumerate}
		\item[(A5)] Assume that there exists a positive constant $C$ such that
		\begin{equation}\label{jum.1}
		\begin{split}
		&|\sigma(x)|^{2}+|\tilde{b}(x)|^{2}\leq C[1+|x|^{2}],\\
		&|\sigma(x)-\sigma(y)| +|\tilde{b}(x)-\tilde{b}(y)|\leq C|x-y|\\
		&\int_{\{|z|\in(0,1)\}}|z|\,|s(x,z)-s(y,z)|\nu(\der z)\leq C|x-y|,
		\end{split}
		\end{equation} 
		for $x,y\in\R^{d}$ with $x\neq y$.
	\end{enumerate}
	\begin{rem}
		Notice that for each $x,y\in\R^{d}$ with $x\neq y$, 
		\begin{align}\label{jum.2}
		\begin{split}
		\int_{\mathcal{S}_{1}}|z|^{2}\uno_{\{0\leq\rho\leq s(x,z)\}}\eta(\der\rho,\der z)&=\int_{\{|z|\in(0,1)\}}|z|^{2}s(x,z)\nu(\der z)\leq C_{\nu},\\
		\int_{\mathcal{S}_{1}}|z|\,|\uno_{\{0\leq\rho\leq s(x,z)\}}-\uno_{\{0\leq\rho\leq s(y,z)\}}|\eta(\der\rho,\der z)&=\int_{\{|z|\in(0,1)\}}|z|\,|s(x,z)-s(y,z)|\nu(\der z)\\
		&\leq C|x-y|,
		\end{split}
		\end{align}
		since (A4) holds. Then, from \eqref{jum.1}--\eqref{jum.2},  the SDE \eqref{esd1} has a unique c\`adl\`ag adapted solution $X$; see \cite{KK2014}.
	\end{rem}

Since $\int_{\{|z|\in(0,1)\}}|z|\nu(\der z)<\infty$  and $\eta(\der\rho,\der z)=\der \rho\nu(\der z)$, the infinitesimal generator of $X$ is given by
\begin{align}\label{esd3.0}
\Gamma_{1} u(x)&=\tr[a(x)\deri^{2}u(x)]-\langle  \tilde{b}(x) ,\deri^{1}u(x)\rangle\notag\\
&\quad+\int_{\mathcal{S}}[u(x+z\uno_{\{\rho\in[0,s(x,z)]\}})-u(x)-\langle\deri^{1}u(x),z\rangle\uno_{\{\rho\in[0, s(x,z)],\,|z|\in(0,1)\}}]\eta(\der\rho,\der z)\notag\\
&=\tr[a(x)\deri^{2}u(x)]-\langle  b(x) ,\deri^{1}u(x)\rangle+\int_{\R^{d}_{*}}[u(x+z)-u(x)]   s   (x,z)\nu(\der z), 
\end{align}
where  $a_{ij}=\frac{1}{2}(\sigma\sigma^{\trans})_{ij}$ and $ b =  \tilde{b} +\int_{\{ |z|\in(0,1)\}}z s   (\cdot,z)\nu(\der z)$. Let $\mathcal{U}$ be the admissible class of control processes $(n,\zeta)$ that satisfies
\begin{equation}\label{cont.1}
\begin{cases}
(n_{\mathpzc{t}},\zeta_{\mathpzc{t}})\in\R^{d}\times\R_+,\ \mathpzc{t}\geq0,\ 
(n,\zeta)\ \text{is adapted to the filtration}\  \F,\ \zeta_{0-}=0\\
\text{and}\ \zeta_{\mathpzc{t}}\ \text{is non-decreasing and is right continuous with left hand limits,}\ \mathpzc{t}\geq0,\\
\text{and }\ |n_{\mathpzc{t}}|=1\ {\der\zeta_{\mathpzc{t}}\text{-a.s.},\ \mathpzc{t}\geq0} .
\end{cases}
\end{equation}
Then, for each $(n,\zeta)\in\mathcal{U}$ and $\tilde{x}\in\set$, the process $X^{n,\zeta}=\{X^{n,\zeta}_{\mathpzc{t}}:\mathpzc{t}\geq0\}$ evolves as
\begin{align}\label{esd3.1}
X^{n,\zeta}_{\mathpzc{t}}=\tilde{x}-\int_{0}^{\mathpzc{t}} \tilde{b} (X^{n,\zeta}_{\mathpzc{s}})\der \mathpzc{s}+\int_{0}^{\mathpzc{t}}\sigma(X^{n,\zeta}_{\mathpzc{s}})\der  W_{\mathpzc{s}}+\int_{0}^{\mathpzc{t}}\der  J_{\mathpzc{s}}-\int_{[0,\mathpzc{t}]}n_{\mathpzc{s}}\der \zeta_{\mathpzc{s}},\ \mathpzc{t}\geq0.
\end{align}
The process $n$ provides the direction and $\zeta$  the intensity of the push applied to the state process $X^{n,\zeta}$.   Since \eqref{jum.1}--\eqref{jum.2} hold, we get that the SDE \eqref{esd3.1} has a unique c\`adl\`ag adapted solution $X^{n,\zeta}$; see \cite{dade}. The jumps of $X^{n,\zeta}$ are given by  the processes $J$ and $\zeta$, i.e.,
$\Delta X^{n,\zeta}_{\mathpzc{t}}\eqdef X^{n,\zeta}_{\mathpzc{t}}-X^{n,\zeta}_{\mathpzc{t}-}=\Delta  J_{\mathpzc{t}}-n_{\mathpzc{t}}\Delta \zeta_{\mathpzc{t}}$, for $\mathpzc{t}\geq0$. The \textit{cost function} corresponding to  $(n,\zeta)\in\mathcal{U}$, is defined as
\begin{align}\label{esd1.1}
V_{n,\zeta}(\tilde{x})&=\E_{\tilde{x}}\biggr[\int_{[0,\tau^{n,\zeta}]}\expo^{-q\mathpzc{t}}[ h (X^{n,\zeta}_{\mathpzc{t}})\der \mathpzc{t}+ g   (X^{n,\zeta}_{\mathpzc{t}-})\circ\der\zeta_{\mathpzc{t}}]\biggr],\ \tilde{x}\in\overline{\set},
\end{align}
where  $\tau^{n,\zeta}\eqdef\inf\{ \mathpzc{t}>0:X_{\mathpzc{t}}^{n,\zeta}\notin\set\}$, $q$ is a positive constant and
\begin{multline}\label{esd.1}
\int_{[0,\mathpzc{t}]}\expo^{-q\mathpzc{s}} g   (X^{n,\zeta}_{\mathpzc{s}-})\circ\der\zeta_{\mathpzc{s}}\eqdef\int_{0}^{\mathpzc{t}}\expo^{-q\mathpzc{s}} g   (X^{n,\zeta}_{\mathpzc{s}})\der\zeta_{\mathpzc{s}}^{\comp}\\
+\sum_{0\leq\mathpzc{s}\leq\mathpzc{t}}\expo^{-q\mathpzc{s}}\Delta\zeta_{\mathpzc{s}}\int_{0}^{1} g   (X^{n,\zeta}_{\mathpzc{s}-}+\Delta  J_{\mathpzc{s}}-\lambda n_{\mathpzc{s}}\Delta\zeta_{\mathpzc{s}})\der\lambda,\ \text{for}\ \mathpzc{t}>0,
\end{multline}
where $\zeta^{\comp}$ denotes the continuous part of $\zeta$ and $ h , g:\R^{d}\longrightarrow\R   $ are continuous and non-negative. Notice that each control $(n,\zeta)\in\mathcal{U}$ generates two types of costs  {because} $(n,\zeta)$ controls the process $X^{n,\zeta}$  continuously or by jumps of $\zeta$ while $X^{n,\zeta}$ is inside $\set$. The term $\int_{0}^{1} g   (X^{n,\zeta}_{\mathpzc{s}-}+\Delta  J_{\mathpzc{s}}-\lambda n_{\mathpzc{s}}\Delta\zeta_{\mathpzc{s}})\der\lambda$ represents the cost for using the   jump  $\Delta\zeta_{\mathpzc{s}}\neq0$ with direction $-n_{\mathpzc{s}}$ on $X^{n,\zeta}_{\mathpzc{s}-}+\Delta  J_{\mathpzc{s}}$ at time $ \mathpzc{s}$. The \textit{value function} is defined by
\begin{equation}\label{vf1}
V(\tilde{x})=\inf_{(n,\zeta)\in\mathcal{U}}V_{n,\zeta}(\tilde{x}),\ \text{for}\ \tilde{x}\in\overline{\set}.
\end{equation}

A heuristic derivation from dynamic programming principle; see \cite[Ch. VIII]{flem}, shows that the HJB equation corresponding to the value function $V$ is given by
\begin{equation}\label{esd5}
\max\{[q-\Gamma_{1}]u-  h ,|\deri^{1}u|- g   \}= 0,\ \text{on}\ \set,\quad \text{s.t.}\ u=0,\ \text{in}\ \overline{\set}_{\inted}\setminus\set ,
\end{equation}
where $\Gamma_{1}$ is as in \eqref{esd3.0}. An immediate consequence of Theorem \ref{princ1.0.1} is the following corollary. 

\begin{coro}
Assume that $a_{ij}$, $ b _{i}$, $ h $, $ g   $, $  s  $ satisfy (A1)--(A5). Then, the HJB equation \eqref{esd5} has a unique non-negative solution $u$ in $\hol^{0,1}(\overline{\set})\cap\sob^{2,p}_{\loc}(\set)$, for each $p\in(d,\infty)$.
\end{coro}

\begin{prop}\label{veri1}
	Let $u$ be the non-negative solution to the HJB equation \eqref{esd5}. Then,  $V$ defined in \eqref{vf1} and $u$ agree on $\overline{\set}$. 
\end{prop}
To give the  proof of this proposition, we need to introduce a class of penalized controls which are related to the singular control problem described above and the NPIDD problem \eqref{p13.0}. For more detail, see Section \ref{HJBPro}.

\subsubsection*{Comments}
\begin{rem}
Previously to the paper  {by}  Moreno-Franco \cite{moreno} and  {this paper}, the singular stochastic control problem described above has been studied extensively in the   one-dimensional case when the state process  {includes} the continuous part only; see, e.g., \cite{A1999,DZ1998,GZ2015,JJZ2008, K1983}. Several articles  {focused} on the  multidimensional case when  the state process is a multidimensional SBM \cite{evans,kruk,menaldi,soner}, a diffusion process \cite{flem,hynd2, Hynd3},  or  a multidimensional SBM with jumps process, whose L\'evy measure $\nu$ satisfies $\int_{\R^{d}_{*}}|z|^{p}\nu(\der z)<\infty$, for all $p\geq2$ \cite{menal2}.  It should be noted that the results in \cite{A1999,DZ1998,GZ2015,JJZ2008, K1983,kruk, menal2, menaldi,  soner},  {were} given on the whole space $\R^{d}$, and that in \cite{soner}, under convexity and polynomial growth assumptions on the function $ h $, it is  {shown} that the value function associated with a controlled two-dimensional SBM is in $\hol^{2}(\R^{2})$. 
\end{rem}
\begin{rem}	
For the one-dimensional case,  similar problems to ours can be found in the mathematical finance and risk theory; see, e.g.,  \cite{DDKL2016,Y2017} and \cite{app,BKY2014}, respectively. In the risk theory, one wishes to determine an optimal dividend payment strategy for an insurance company (or discovery company) to pay its shareholders, where  {the} insurance company's surplus is modelled by  a spectrally negative (or positive) L\'evy process, i.e., a stochastic process which has a c\`adl\`ag path, and stationary and independent increments without positive (negative) discontinuity. Using some results of fluctuation theory, it can be shown (in some cases) that the value function associated with this problem is in $\hol^{2}(\R)$  and satisfies a similar HJB equation as in \eqref{esd5} on the whole space $\R$; see, e.g., \cite{app,BKY2014}.
\end{rem}
\begin{rem}	
Some ideas given here  and in Section \ref{HJBPro} are taken from \cite{zhu}, where the author has shown that the value function associated with  a controlled multidimensional diffusion process, satisfies the dynamic programming  variational inequality in  {the}  almost everywhere sense. 
\end{rem}
	
\subsection{Notation}\label{notation}
	
We introduce the notation and basic definitions of some spaces that are used in this paper. Let $\alpha\in[0,1]$ and $m\in\{0,\dots,k\}$, with $k\geq0$ an integer. The set $\hol^{k}(\set)$ consists of real-valued functions on $\set$ that are $k$-fold  {continuously} differentiable. We define $\hol^{\infty}(\set)=\bigcap_{k=0}^{\infty}\hol^{k}(\set)$. The sets $\hol^{k}_{\comp}(\set)$  and $\hol^{\infty}_{\comp}(\set)$ consist  of functions in $\hol^{k}(\set)$ and $\hol^{\infty}(\set)$, whose support is compact and contained in $\set$, respectively. The set $\hol^{k}(\overline{\set})$ is defined as the set of real-valued functions such that $\partial^{a}f$ is  bounded and uniformly continuous  on $\set$, for all $a\in\ind_{m}$ and $m\leq k$ , where $\ind_{m}$  is the set of all multi-indices of order $m\leq k$. This space is equipped with the following norm $||f||_{\hol^{k}(\overline{\set})}=\sum_{m=0}^{k}\sum_{a\in\ind_{m}}\sup_{x\in\set}\{|\partial^{a}f(x)|\}$, where $ \sum_{a\in\ind_{m}}$ denotes summation over all possible $m$-fold derivatives of $f$.  The operator $[\,\cdot\,]_{\hol^{0,\alpha}(\set)}$ is given by $[f]_{\hol^{0,\alpha}(\set)}\eqdef\sup_{x,y\in \set,\, x\neq y}\Bigl\{\frac{|f(x)-f(y)|}{|x-y|^{\alpha}}\Bigr\}$. We define $\hol^{k,\alpha}_{\loc}(\set)$ as the set of functions in $\hol^{k}(\set)$ such that $[\partial^{a}f]_{\hol^{0,\alpha}(\mathcal{K})}<\infty$, for all compact set $\mathcal{K}\subset\set$, $a\in\ind_{m}$ and $m\leq k$. The set $\hol^{k,\alpha}(\overline{\set})$ denotes the set of all functions in $\hol^{k}(\overline{\set})$ such that $[\partial^{a}f]_{\hol^{0,\alpha}(\set)}<\infty$, for every $a\in\mathcal{D}_{m}$ and $m\leq k$.  This set is equipped with the following norm
$||f||_{\hol^{k,\alpha}(\overline{\set})}=\sum_{m=0}^{k}\sum_{a\in\ind_{m}}[||\partial^{a}f(x)||_{\hol(\overline{\set})}+[\partial^{a}f]_{\hol^{0,\alpha}(\set)}]$.
We understand $\hol^{k,\alpha}(\R^{d})$ as $\hol^{k,\alpha}(\overline{\R^{d}})$, in the sense that  $[\partial^{a}f]_{\hol^{0,\alpha}(\R^{d})}<\infty$, for every $a\in\ind_{m}$ and $m\leq k$. As usual, $\Lp^{p}(\set)$ with $1\leq p<\infty$, denotes the class of real-valued functions on $\set$ with finite norm $||f||^{p}_{\Lp^{p}(\set)}\eqdef\int_{\set}|f|^{p}\der x<\infty$, where $\der x$ denotes the Lebesgue measure. Also, let  $\Lp^{p}_{\loc}(\set)$ consist of functions whose $\Lp^{p}$-norm is finite on any compact subset of $\set$. Define the Sobolev space $\sob^{k,p}(\set)$ as the class of functions  $f\in\Lp^{p}(\set)$ with weak or distributional partial derivatives $\partial^{a}f$, see \cite[p. 22]{adams}, and with finite norm $||f||^{p}_{\sob^{k,p}(\set)}=\sum_{m=0}^{k}\sum_{a\in\ind_{m}}||\partial^{a}f||^{p}_{\Lp^{p}(\set)}$. The space $\sob^{k,p}_{\loc}(\set)$ consists of functions whose $\sob^{k,p}$-norm is finite on any compact subset of $\set$. When $p=\infty$, the Sobolev and Lipschitz spaces are related. In particular, $\sob^{k,\infty}_{\loc}(\set)=\hol^{k-1,1}_{\loc}(\set)$  and $\sob^{k,\infty}(\set)=\hol^{k-1,1}(\overline{\set})$. Finally, $C=C(*,\dots,*)$ and $K=K(*,\dots,*)$ represent positive constants that depend only on the quantities appearing in parenthesis. 

\section{Existence and uniqueness of the NPIDD problem }\label{apr1}
 
In this section, we are interested in establishing the existence, uniqueness and regularity of the solution to the NPIDD problem  \eqref{p13.0}. The arguments used here  are based  on the Schaefer fixed point Theorem; see \cite[Thm. 4, p. 539]{evans2}.  {First}, we shall analyse some properties of $\inted w$, defined in \eqref{p6.1}, when the function $w$ is $\hol^{0,1}$ and $\hol^{1,1}$  on
$\overline{\set}_{\inted}$. These results will be helpful to show some  properties of the solution to the NPIDD problem  \eqref{p13.0}. 
\begin{rem} \label{s0}
	Notice that by definition of $\overline{\set}_{\inted}$; see \eqref{p3}, and since $s$ is a non-negative function on $\overline{\set}\times\R^{d}$, it follows that $s(x,z)=0$, for each $(x,z)\in\overline{\set}\times\R^{d}_{*}$ such that $x+z\notin\overline{\set}_{\inted}$. Then, $\inted w$ can be rewritten as
		\begin{align*}
		\inted w(x)&=\int_{\R^{d}_{*}}[w(x+z)-w(x)]s(x,z)\uno_{\{x+z\in\overline{\set}_{\inted}\}}\nu(\der z),\  \text{for}\ x\in\overline{\set}.
		\end{align*}
\end{rem}

\begin{lema}\label{g1}
\begin{enumerate}[(i)]
\item If $w\in\hol^{0,1}(\overline{\set}_{\inted})$, then $\inted w\in\hol(\overline{\set})$.

\item If $w,v\in\hol^{0,1}(\overline{\set}_{\inted})$, then $[w,v]_{\inted}=\inted [wv]-w\inted v-v\inted w$ on $\overline{\set}$, where
$$[w,v]_{\inte}\eqdef\int_{\R^{d}_{*}}[w(\cdot+z)-w][v(\cdot+z)-v]s(\cdot,z)\nu(\der z),\  \text{on}\ x\in\overline{\set}.$$

\item If $w\in\hol^{1,1}(\overline{\set}_{\inted})$, then $\inted w\in\hol^{1}(\overline{\set})$ and $\partial_{i}[\inted w]=\inted[\partial_{i}w]+\widetilde{\inte}_{i}w$
where $$\widetilde{\inte}_{i}w\eqdef\int_{\R^{d}_{*}}[w(\cdot+z)-w]\partial_{x_{i}}s(\cdot,z)\nu(\der z).$$

\end{enumerate}
\end{lema}

\begin{proof} 
Using (A4)  and by Dominated Convergence Theorem, it is easy to see that $\inted w\in\hol(\overline{\set})$, when $w\in\hol^{0,1}(\overline{\set}_{\inted})$. Calculating $[w(\cdot+z)-w][v(\cdot+z)-v]$, the reader can verify that the statements in (ii) of the  lemma above is true. We shall prove the statement given in (iii). Let  $w$ be in $\hol^{1,1}(\overline{\set}_{\inted})$, and define $f_{w}:\overline{\set}\times\R^{d}_{*}\longrightarrow\R$ as
\begin{equation*}
f_{w}(x,z)\eqdef
\begin{cases} 
[w(x+z)-w(x)]s(x,z), &\text{if}\ x+z\in\overline{\set}_{\inted},\\
0, &\text{otherwise}.
\end{cases}
\end{equation*}
 Consider $x,y\in\overline{\set}$ such that $x\neq y$. Then, by Remark \ref{s0}, we see that
\begin{align*}
|\inted w(x)-\inted w(y)|&\leq\int_{\{|z|\in(0,1)\}}|f_{w}(x,z)\uno_{\{x+z\in\overline{\set}_{\inted}\}}-f_{w}(y,z)\uno_{\{y+z\in\overline{\set}_{\inted}\}}|\nu(\der z)\\
&\quad+\int_{\{|z|\geq1\}}|f_{w}(x,z)\uno_{\{x+z\in\overline{\set}_{\inted}\}}-f_{w}(y,z)\uno_{\{y+z\in\overline{\set}_{\inted}\}}|\nu(\der z).
\end{align*}
 {Meanwhile},   from Mean Value Theorem and noting that 
	$$s(x,z)\uno_{\{y+z\in\overline{\set}_{\inted},\,x+z\notin\overline{\set}_{\inted}\}}=s(y,z)\uno_{\{x+z\in\overline{\set}_{\inted},\,y+z\notin\overline{\set}_{\inted}\}}=0,$$
we have   
\begin{align}\label{lips2}
|f_{w}(x,z)-f_{w}(y,z)|
&\leq|x-y|\bigg[\uno_{\{x+z\in\overline{\set}_{\inted},\,y+z\in\overline{\set}_{\inted}\}}\int_{0}^{1}|\deri^{1}_{x}f_{w}(y+t[x-y],z)|\der t\notag\\
&\quad+\int_{0}^{1}|\deri^{1}_{x}s(y+t[x-y],z)|\der t\Big[|w(x+z)-w(x)|\uno_{\{x+z\in\overline{\set}_{\inted},\,y+z\notin\overline{\set}_{\inted}\}}\notag\\
&\quad+|w(y+z)-w(y)|\uno_{\{y+z\in\overline{\set}_{\inted},\, x+z\notin\overline{\set}_{\inted}\}}\Big]\bigg],
\end{align}
where $\deri^{1}_{x}f_{w}$  denotes the gradient  with respect to $x$. Observe that
\begin{equation}\label{lips3}
\deri^{1}_{x}f(x,z)=s(x,z)\deri^{1}[w(x+z)-w(x)]+[w(x+z)-w(x)]\deri^{1}_{x}s(x,z),
\end{equation}
for $(x,z)\in\overline{\set}\times\R^{d}$ such that $x+z\in\overline{\set}_{\inted}$. If $|z|<1$, by \eqref{lips2}--\eqref{lips3} and using $w,\  \deri^{1}w$ are Lipschitz functions on $\overline{\set}_{\inted}$, we get 
\begin{equation}\label{lips4}
|f_{w}(x,z)-f_{w}(y,z)|
\leq K_{1} |z|\,|x-y|,
\end{equation}
where $K_{1}\eqdef\Big[\sum_{k}[\partial_{k}w]^{2}_{\hol^{0,1}(\overline{\set}_{\inted})}\Big]^{\frac{1}{2}}+3K_{2}[w]_{\hol^{0,1}(\overline{\set}_{\inted})}$ and $K_{2}\eqdef\Big[\sum_{k}||\partial_{x_{k}}s||^{2}_{\hol(\overline{\set}\times\R^{d})}\Big]^{\frac{1}{2}}$. If $|z|\geq1$, by \eqref{lips2}--\eqref{lips3} and since $w,\ \deri^{1}w$ are bounded on $\overline{\set}_{\inted}$,  it can be verified that 
\begin{align}\label{lips5}
|f_{w}(x,z)-f_{w}(y,z)|\leq K_{3}|x-y|,
\end{align}
where $K_{3}\eqdef2\Big[\sum_{k}||\partial_{k}w||^{2}_{\hol(\overline{\set}_{\inted})}\Big]^{\frac{1}{2}}+6K_{1}||w||_{\hol(\overline{\set}_{\inted})}$. Using  \eqref{lips4}--\eqref{lips5}, we have that for $(x,z)\in\overline{\set}\times\R^{d}$ such that $x+z\in\overline{\set}_{\inted}$, $\frac{1}{\varrho}|f_{w}(x+\varrho e_{i},z)-f_{w}(x,z)|$, is bounded by $K_{2}|z|\uno_{\{|z|\in(0,1)\}}+K_{3}\uno_{\{|z|\geq1\}}$, which is an integrable function with respect to the L\'evy measure $\nu$. Then, using   Dominated Convergence Theorem and \eqref{lips3}, it follows that $\partial_{i}[\inted w(x)]=\inted[\partial_{i}w(x)]+\widetilde{\inte}_{i}w(x)$. From here, (A4) and since $\partial_{i}w\in\hol^{0,1}(\overline{\set}_{\inted})$, we conclude that $\inted w\in\hol^{1}(\overline{\set})$.
\end{proof}

\subsection{ \`A priori estimates of the solution to the NPIDD problem}

To apply the Schaefer fixed point Theorem   in our problem, we need to show an \textit{\`a  priori} estimate  of   the classical solution $u^{\varepsilon}$  to  the NPIDD problem  \eqref{p13.0} on the space $(\hol^{1,\alpha'}(\overline{\set}),||\cdot||_{\hol^{1,\alpha'}(\overline{\set})})$; see Lemma \ref{gr2}. 
\begin{rem}\label{dep.1}
Notice that if the solution $u^{\varepsilon}$ is at least $\hol^{2}$ on $\overline{\set}_{\inted}$,  {then} using \cite[Thm. 3.1.22]{garroni} and the Sobolev embedding Theorem \cite[Thm. 4.12, p. 85]{adams}, it can be verified that for each $\varepsilon\in(0,1)$ fixed,
\begin{equation}\label{es.1}
||u^{\varepsilon}||_{\hol^{1,\alpha'}(\overline{\set})}\leq C\Big[||h||_{\Lp^{p'}(\set)}+||\psi_{\varepsilon}(|\deri^{1}u^{\varepsilon}|^2-g^{2})||_{\Lp^{p'}(\set)}\Big],
\end{equation}  
for some $C=C(\Lambda,\nu,s,\alpha')$, where $p'\in(d,\infty)$ is such that $\alpha'=1-\frac{d}{p'}$. We see that the second term in the RHS of \eqref{es.1} depends on $\psi_{\varepsilon}(|\deri^{1}u^{\varepsilon}|^{2}-g^{2})$. If $|\deri^{1}u^{\varepsilon}|\leq C'$ for some constant $C'$ independent of $\varepsilon$; see Lemma \ref{gr1}, then, from (A.1), \eqref{p12.1} and \eqref{es.1}, it follows that $||u^{\varepsilon}||_{\hol^{1,\alpha'}(\overline{\set})}\leq [\Lambda+\frac{1}{\varepsilon}[[C']^{2}+\Lambda^{2}+1]]C\int_{\set}\der x$. Although this estimation depends on $1/\varepsilon$, it is sufficient to use the Schaefer fixed  point Theorem in our problem, since  $\varepsilon$ is fixed; see Subsection \ref{pp1}. Later on, in Section \ref{pp2}, we will give a local estimation for $\psi_{\varepsilon}(|\deri^{1}u^{\varepsilon}|^2-g^{2})$ which is independent of $\varepsilon$; see Lemma \ref{cotaphi}. 
\end{rem}

Before continuing, we need to introduce the concepts of sub-solution and super-solution for the NPIDD problem \eqref{p13.0}.

\begin{defin}\label{princmax1}
\begin{enumerate}[1.]
  \item A function $f$ in $\hol^{2}(\overline{\set})\cap\hol^{0,1}(\overline{\set}_{\inted})$ is a sub-solution of \eqref{p13.0} if
  \begin{equation*}
  \Gamma f+ \psi_{\varepsilon}(|\deri^{1} f|^{2}- g^{2}) \leq h,\ \text{in}\ \set,\quad \text{s.t.}\ f=0,\ \text{on}\ \overline{\set}_{\inted}\setminus\set  .
    \end{equation*}
  \item  A function $f$ in  $\hol^{2}(\overline{\set})\cap\hol^{0,1}(\overline{\set}_{\inted})$ is a super-solution of \eqref{p13.0} if
 \begin{equation*}
 \Gamma f+ \psi_{\varepsilon}(|\deri^{1} f|^{2}- g^{2}) \geq h,\ \text{in}\ \set,\quad\text{s.t.}\
  f=0,\ \text{on}\  \overline{\set}_{\inted}\setminus\set. 
\end{equation*}
\end{enumerate}
 \end{defin}
An immediate consequence of  {this}  definition is the following result, which is used to prove Lemma \ref{Lb1}.
\begin{lema}\label{supsuper}
If $\varphi$ and $\eta$ are a sub-solution and a super-solution of \eqref{p13.0}, respectively, then $\varphi-\eta\leq0$  on $\overline{\set}$.
\end{lema}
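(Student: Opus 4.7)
The plan is to argue by contradiction via a maximum-principle computation at the contact point. Set $w\eqdef\varphi-\eta$, which lies in $\hol^{2}(\overline{\set})\cap\hol^{0}(\overline{\set}_{\inted})$ and vanishes on $\overline{\set}_{\inted}\setminus\set$. Suppose, for contradiction, that $\max_{\overline{\set}}w>0$. Then the positive maximum is attained at some interior point $x_{0}\in\set$, because $w=0$ on the complement.

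At $x_{0}$, the first- and second-order optimality conditions give $\deri^{1}w(x_{0})=0$ and $\deri^{2}w(x_{0})\leq0$ as a symmetric matrix, so by the ellipticity assumption \eqref{H2} we have $-\tr[a(x_{0})\deri^{2}w(x_{0})]\geq0$, together with $\langle b(x_{0}),\deri^{1}w(x_{0})\rangle=0$. For the nonlocal term, every $z\in\sop M(x_{0},\cdot)$ has $x_{0}+z$ lying either in $\overline{\set}$, where $w(x_{0}+z)\leq w(x_{0})$ by maximality, or in $\overline{\set}_{\inted}\setminus\set$, where $w(x_{0}+z)=0<w(x_{0})$. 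Since $s\geq0$ and $\nu\geq0$, this forces $\inte w(x_{0})\leq0$. Combining these facts with \eqref{H3} and $w(x_{0})>0$ gives
\begin{equation*}
\Gamma w(x_{0})\geq c(x_{0})w(x_{0})>0,\qquad\text{i.e.}\quad \Gamma\varphi(x_{0})>\Gamma\eta(x_{0}).
\end{equation*}

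The contradiction is completed by observing that the penalization contribution drops out at $x_{0}$: since $\deri^{1}\varphi(x_{0})=\deri^{1}\eta(x_{0})$, one has $\psi_{\varepsilon}(|\deri^{1}\varphi(x_{0})|^{2}-g^{2}(x_{0}))=\psi_{\varepsilon}(|\deri^{1}\eta(x_{0})|^{2}-g^{2}(x_{0}))$. Subtracting the defining inequalities from Definition \ref{princmax1} at $x_{0}$ therefore yields $\Gamma\varphi(x_{0})\leq\Gamma\eta(x_{0})$, which contradicts the strict inequality above. Hence $\max_{\overline{\set}}w\leq0$, proving the claim.

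The main technical point to check carefully is the nonlocal inequality $\inte w(x_{0})\leq0$: one must explicitly split the integration domain according to whether the jump destination lies in $\overline{\set}$ or in $\overline{\set}_{\inted}\setminus\set$, and exploit both the exterior boundary condition and the positivity of $w(x_{0})$. Everything else is classical; the argument is tailored to the fact that the nonlinearity in the penalization depends on $u$ only through $|\deri^{1}u|^{2}$, so that it trivializes at a point where the gradient of the difference vanishes and no monotonicity or convexity property of $\psi_{\varepsilon}$ is needed beyond what is already used to define sub- and super-solutions.
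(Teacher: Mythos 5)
Your proposal is correct and follows essentially the same argument as the paper: locate the maximum of $\varphi-\eta$, use the first- and second-order conditions together with ellipticity and the sign of the nonlocal increment, note that the penalization terms coincide at the max because the gradients agree, and conclude from $c>0$. The only stylistic difference is that you frame it as a contradiction starting from a hypothetical positive maximum, which incidentally makes the nonlocal inequality $\inte w(x_0)\leq0$ immediate without having to remark that the case $[\varphi-\eta](x^*)\leq0$ is already trivial; both versions are sound.
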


\begin{proof}
From Definition \ref{princmax1}, we get
\begin{equation}\label{sup1}
\Gamma[\varphi-\eta]+ \psi_{\varepsilon}(|\deri^{1}\varphi|^{2}- g^{2})-\psi_{\varepsilon}(|\deri^{1}\eta|^{2}- g^{2})\leq 0,\ \text{in}\ \set,\quad\text{s.t.}\  \varphi-\eta=0,\ \text{on}\   \overline{\set}_{\inted}\setminus\set.
\end{equation}
Let $x^{*}\in\overline{\set}$ be a maximum point of $\varphi-\eta$. If $x^{*}\in\partial{\set}$, trivially, we have $\varphi-\eta\leq0$ on $\overline{\set}$. Suppose that $x^{*}\in \set$. This means that 
\begin{equation}\label{sup1.1}
\begin{split}
&\deri^{1}[\varphi-\eta](x^{*})=0,\ \tr[a(x^{*})\deri^{2}[\varphi-\eta](x^{*})]\leq 0,\\
&[\varphi-\eta](x^{*}+z)-[\varphi-\eta](x^{*})\leq 0 ,\ \text{for}\ z\in\R^{d}_{*}\  \text{with}\  x^{*}+z\in\overline{\set}_{\inted}. 
\end{split}
\end{equation}
Since $[\varphi-\eta](x^{*}+z)=0$ when $x^{*}+z\in\overline{\set}_{\inted}\setminus\set$, it follows that $[\varphi-\eta](x^{*})\geq0$.  {Meanwhile}, applying \eqref{sup1.1} in \eqref{sup1}, it yields $c(x^{*})[\varphi-\eta](x^{*})\leq0$. Then, $[\varphi-\eta](x^{*})\leq0$, since $c>0$ on $\overline\set$. Therefore, $[\varphi-\eta](x)\leq[\varphi-\eta](x^{*})=0$ for all $x\in\overline{\set}$.
\end{proof}

\begin{lema}\label{Lb1}
If $u^{\varepsilon}\in\hol^{2}(\overline{\set})\cap\hol^{0,1}(\overline{\set}_{\inted})$ is a solution to the NPIDD problem \eqref{p13.0}, there exists a positive constant  $C_{1}$ independent of $\varepsilon$, such that $0\leq u^{\varepsilon}\leq C_{1}$ on $\overline{\set}$ and $|\deri^{1}u^{\varepsilon}|\leq d^{\frac{1}{2}} C_{1}$ on $\partial \set$.
\end{lema}

From now on, for simplicity of notation, we replace $u^{\varepsilon}$ by $u$ in the proofs of the results. 
 
\begin{proof}[Proof of Lemma \ref{Lb1}]
For $h$ which is a $\hol^{1,\alpha'}$-function on $\overline{\set}$, let $v\in\hol^{2,\alpha'}(\overline{\set})$ be  the unique solution to the LPIDD problem
$$\Gamma v=h,\ \text{in}\ \set,\quad \text{s.t.}\  v=0,\ \text{on}\  \overline{\set}_{\inted}\setminus\set.$$
Then, $||v||_{\hol^{2,\alpha'}(\overline{\set})}\leq K_{4} ||h||_{\hol^{0,\alpha'}(\overline{\set})}\leq K_{4}\Lambda\defeq C_{1}$, where $K_{4}=K_{4}(d,\Lambda,\nu,s,\alpha')$; see \cite[Thm. 3.1.12]{garroni}. Then $v$ is a super-solution of \eqref{p13.0}.  {Meanwhile}, we know that $h\geq0$, this implies that the zero function is a sub-solution of \eqref{p13.0}. Therefore, using Lemma \ref{supsuper}, it follows that $0\leq u\leq K_{4}\Lambda$ on $\overline{\set}$. Take a point $x$ in $\partial \set$ and a unit vector $\mathbb{n}_{x}$ outside $\set$ such that it is not tangent to $\set$. Defining $\mathbb{v}=-\mathbb{n}_{x}$, we have that $\langle\mathbb{v},\deri^{1}v(x)\rangle=\lim_{\varrho\rightarrow0}\frac{v(x+\varrho\mathbb{v})}{\varrho}\geq\lim_{\varrho\rightarrow0}\frac{u(x+\varrho\mathbb{v})}{\varrho}=\langle\mathbb{v},\deri^{1}u(x)\rangle$. Since $\mathbb{v}=-\mathbb{n}_{x}$, it yields $\langle\mathbb{n}_{x},\deri^{1}v(x)\rangle\leq\langle\mathbb{n}_{x},\deri^{1}u(x)\rangle\leq 0$. Then, $|\langle\mathbb{n}_{x},\deri^{1}u(x)\rangle|\leq \Big(\sum_{i}||\partial_{i}v||^{2}_{\hol(\overline{\set})}\Big)^{\frac{1}{2}}\leq d^{\frac{1}{2}} C_{1}$. Suppose that $|\deri^{1}u(x)|\neq0$ and that the  vector $\frac{\deri^{1}u(x)}{|\deri^{1}u(x)|}$ is outside $\set$.  Taking $\mathbb{n}_{x}=\frac{\deri^{1}u(x)}{|\deri^{1}u(x)|}$, it follows that $|\deri^{1}u(x)|\leq  d^{\frac{1}{2}}C_{1}$. If the vector $\mathbb{v}=\frac{\deri^{1}u(x)}{|\deri^{1}u(x)|}$ is inside  $\set$, proceeding as before, we have $0\leq\langle\mathbb{v},\deri^{1}u(x)\rangle\leq\langle\mathbb{v},\deri^{1}v(x)\rangle$. Therefore, $|\deri^{1}u(x)|\leq  d^{\frac{1}{2}}C_{1}$. In the case that $|\deri^{1}u(x)|=0$, the inequality is trivially true.  {Consequently, we have} finished the proof.
\end{proof}

Before checking  $|\deri^{1}u^{\varepsilon}|\leq C'$ on $\overline{\set}$, for some constant $C'>0$ independent of $\varepsilon$, we need  to define an auxiliary function $\varphi$, which satisfies \eqref{partu4} on $\set$. In particular, \eqref{partu4} is true, when $\varphi$ is evaluated at its maximum $x^{*}\in\set$, which helps us to prove Lemma \ref{gr1}.  

\begin{lema}\label{lemfrontera1.0}
Let $u^{\varepsilon}\in\hol^{3}(\overline{\set})\cap\hol^{2}(\overline{\set}_{\inted})$ be a solution to the NPIDD problem \eqref{p13.0}. Define the auxiliary function $\varphi:\overline{\set}_{\inted}\longrightarrow\R$ as 
$\varphi\eqdef|\deri^{1}u^{\varepsilon}|^{2}-\lambda M_{\varepsilon}u^{\varepsilon}$, on $\overline{\set}_{\inted}$, where  $M_{\varepsilon}\eqdef\sup_{x\in\overline{\set}}|\deri^{1}u^{\varepsilon}(x)|$ and $\lambda>0$. Then, {$\varphi\in\hol^{2}(\overline{\set}_{\inted})$ and} there exists a positive constant  $C_{2}$ independent of $\varepsilon$ such that
\begin{multline}\label{partu4}
-\tr[a\deri^{2}\varphi]-\inte\varphi\leq C_{2}|\deri^{1}u^{\varepsilon}|^{2}+C_{2} [1+M_{\varepsilon}[1+\lambda]]|\deri^{1}u^{\varepsilon}|+\lambda C_{2} M_{\varepsilon}\\
-\psi'_{\varepsilon}(\cdot)[2\langle\deri^{1}u^{\varepsilon},\deri^{1}|\deri^{1}u^{\varepsilon}|^{2}\rangle-C_{2}|\deri^{1}u^{\varepsilon}|-\lambda M_{\varepsilon} [|\deri^{1}u^{\varepsilon}|^{2}- g^{2}]],\ \text{on}\ \set,
\end{multline}
where $\psi'_{\varepsilon}(\cdot)$ denotes $\psi'_{\varepsilon}(|\deri^{1}u|^{2}- g^{2})$.
\end{lema}
\begin{proof}
Notice that $\varphi\in\hol^{2}(\overline{\set}_{\inted}\setminus\partial\set)\cap\hol^{1}(\overline{\set}_{\inted})$, $\partial_{i}\varphi=2\langle\deri^{1}\partial_{i}u,\deri^{1}u\rangle-\lambda M_{\varepsilon}\partial_{i}u$ and $\partial_{ij}\varphi=2[\langle\deri^{1}\partial_{ij}u,\deri^{1}u\rangle+\langle\deri^{1}\partial_{i}u,\deri^{1}\partial_{j}u\rangle]-\lambda M_{\varepsilon}\partial_{ij}u$ on $\overline{\set}$. Then, from here and using $u=\partial_{i}u=\partial_{ij}u=0$ on $\partial\set$, it is easy to see that $\partial_{ij}\varphi=0$ on $\partial\set$ and thus $\varphi\in\hol^{2}(\overline{\set}_{\inted})$. Observe that
\begin{align}\label{dercot1}
-\tr[a\deri^{2}\varphi]-\inte\varphi&=-2\sum_{k}\langle a\deri^{1}\partial_{k}u,\deri^{1}\partial_{k}u\rangle-2\sum_{k}\tr [a\deri^{2}\partial_{k}u]\partial_{k}u\notag\\
&\quad-\inte|\deri^{1}u|^{2}+\lambda M_{\varepsilon}[\tr[a\deri^{2}u]+\inte u].
\end{align}
From \eqref{p6} and \eqref{p13.0}, it follows that
\begin{align}
\lambda M_{\varepsilon}[\tr[a\deri^{2}u]+\inte u]&=\lambda M_{\varepsilon}\psi_{\varepsilon}(\cdot)+\lambda M_{\varepsilon}[\widetilde{\deri}_{1}u-h],\label{dercot2.0}
\end{align}
where $\widetilde{\deri}_{1}u\eqdef\langle b,\deri^{1}u\rangle+cu$. {Differentiating \eqref{p13.0} and by Lemma \ref{g1}.iii, we see that} 
\begin{align}
-\tr[a\deri^{2}\partial_{k}u]-\inte\partial_{k}u&=\tr[[\partial_{k}a]\deri^{2}u]+\widetilde{\inte}_{k}u\notag\\
&\quad+\partial_{k}[h-\langle b,\deri^{1}u\rangle-cu]-\psi'_{\varepsilon}(\cdot)\partial_{k}[|\deri^{1}u|^{2}-g^{2}].\label{dercot2.0.0}
\end{align}  
 By Lemma \ref{g1}.ii, it yields
\begin{equation}\label{der.1}	
-2\partial_{k}u\inte\partial_{k}u=[\partial_{k}u,\partial_{k}u]_{\inte}-\inte[\partial_{k}u]^{2}. 
\end{equation}
From here,  {multiplying  \eqref{dercot2.0.0} by $2\partial_{k}u$ and taking summation over all $k$'s,}
\begin{multline}\label{dercot2}
-2\sum_{k}\tr[a\deri^{2}\partial_{k}u]\partial_{k}u-\inte|\deri^{1}u|^{2}\\
=\widetilde{\deri}_{2}u +2\langle\deri^{1}u,\deri^{1}h\rangle-2\psi'_{\varepsilon}(\cdot)\langle\deri^{1}u,\deri^{1}[|\deri^{1}u|^{2}-g^{2}]\rangle-\sum_{k}[\partial_{k}u,\partial_{k}u]_{\inte},
\end{multline}
with   $\widetilde{\deri}_{2}u\eqdef 2[\sum_{k}\partial_{k}u[\tr[[\partial_{k}a]\deri^{2}u]+\widetilde{\inte}_{k}u]
-\langle\deri^{1}u,\deri^{1}[\langle b,\deri^{1}u\rangle+cu]\rangle]$. Applying \eqref{dercot2.0} and \eqref{dercot2} in \eqref{dercot1}, we get 
\begin{align}\label{in1}
-\tr[a\deri^{2}\varphi]-\inte\varphi&=-2\psi'_{\varepsilon}(\cdot)\langle\deri^{1}u,\deri^{1}[|\deri^{1}u|^{2}-g^{2}]\rangle+\lambda M_{\varepsilon}\psi_{\varepsilon}(\cdot)\notag\\
&\quad-2\sum_{k}\langle a\deri^{1}\partial_{k}u,\deri^{1}\partial_{k}u\rangle-\lambda M_{\varepsilon}h+2\langle\deri^{1}u,\deri^{1}h\rangle\notag\\
&\quad-\sum_{k}[\partial_{k}u,\partial_{k}u]_{\inte}+\widetilde{\deri}_{2}u +\lambda M_{\varepsilon}\widetilde{\deri}_{1}u.
\end{align}
Notice that
\begin{equation}\label{in.1}
\lambda M_{\varepsilon}\psi_{\varepsilon}(\cdot)\leq \lambda M_{\varepsilon}\psi'_{\varepsilon}(\cdot)[|\deri^{1}u|^{2}- g^{2}],
\end{equation}
since $\psi_{\varepsilon}$ is a convex function. From (A1), (A3) and since $h\geq0$ on $\overline{\set}$, it follows  {that}
\begin{equation}
-2\sum_{k}\langle a\deri^{1}\partial_{k}u,\deri^{1}\partial_{k}u\rangle-\lambda M_{\varepsilon} h+2\langle\deri^{1}h,\deri^{1}u\rangle\leq -2\theta|\deri^{2}u|^{2}+2d\Lambda|\deri^{1}u|.
\end{equation}
By (A4), Mean Value Theorem and the estimate  $0\leq u\leq C_{1}$, with $C_{1}$ as in Lemma \ref{Lb1}, we get 
\begin{align*}
{2\sum_{k}\partial_{k}u}\widetilde{\inte}_{k}u&\leq 2dK_{2}{|\deri^{1}u|}\Bigg[\int_{\{|z|\in(0,1)\}}\bigg[\int_{0}^{1}|\deri^{1}u(\cdot+tz)|\der t\bigg]|z|\nu(\der z)\notag\\
&\quad+\int_{\{|z|\geq1\}}[u(\cdot+z)+u]\nu(\der z)\Bigg]\leq 2dK_{2}C_{\nu}[M_{\varepsilon}+2C_{1}]{|\deri^{1}u|}, 
\end{align*}
where $C_{\nu}, K_{2}$ are as in \eqref{H4}, \eqref{lips4}, respectively. 
{Then, using the inequality above} and since $[\partial_{k}u,\partial_{k}u]_{\inte}\geq0$, we have 
\begin{multline}\label{in2}
-\sum_{k}[\partial_{k}u,\partial_{k}u]_{\inte}+\widetilde{\deri}_{2}u +\lambda M_{\varepsilon} \widetilde{\deri}_{1}u
\leq 4d^{3}\Lambda|\deri^{2}u|\,|\deri^{1}u|+2d^{2}\Lambda|\deri^{1}u|^{2}\\
+2dC_{1}[\Lambda+2K_{2}C_{\nu}]|\deri^{1}u|+dM_{\varepsilon}[\lambda\Lambda+2K_{2}C_{\nu}]|\deri^{1}u|+\lambda \Lambda M_{\varepsilon}C_{1}. 
\end{multline}
Therefore, applying  \eqref{in.1}--\eqref{in2} in \eqref{in1} and  noting that
$-\theta|\deri^{2}u|^{2}+2d^{3}\Lambda|\deri^{2}u|\,|\deri^{1}u|\leq \frac{[d^{3}\Lambda]^{2}}{{\theta}}|\deri^{1}u|^{2}$, we obtain the inequality \eqref{partu4}, where $C_{2}=C_{2}(d,\Lambda,\nu,s,\alpha')$. 
\end{proof}

\begin{lema}\label{gr1}
If $u^{\varepsilon}\in\hol^{3}(\overline{\set})\cap\hol^{2}(\overline{\set}_{\inted})$ is a solution to the NPIDD problem \eqref{p13.0},  {then} there exists a positive constant $C_{3}$ independent of $\varepsilon$, such that $|\deri^{1}u^{\varepsilon}|\leq C_{3}$ on $\overline{\set}$.
\end{lema}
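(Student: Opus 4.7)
The plan is a Bernstein-type argument based on the auxiliary function $\varphi = |\deri^1 u|^2 - \lambda M u$ of Lemma \ref{lemfrontera1.0}, with $M = \|\deri^1 u\|_{\hol^0(\overline{\set})}$ and $\lambda$ a positive constant to be chosen large depending only on the coefficients of $\Gamma$ and the dimension $d$. Since $u \in \hol^3(\overline{\set}) \cap \hol^2(\overline{\set}_\inte)$, $\varphi$ attains its maximum on the compact set $\overline{\set}$ at some point $x^*$. Because $u \leq C_1$ by Lemma \ref{Lb1}, once a bound $\varphi(x^*) \leq K$ is produced with $K$ depending only on the coefficients of $\Gamma$, we immediately obtain $|\deri^1 u(x)|^2 = \varphi(x) + \lambda M u(x) \leq K + \lambda M C_1$ on $\overline{\set}$, and taking the supremum yields $M^2 \leq K + \lambda M C_1$, which closes the estimate.

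If $x^* \in \partial \set$, the boundary condition $u(x^*) = 0$ together with Lemma \ref{Lb1} gives $\varphi(x^*) = |\deri^1 u(x^*)|^2 \leq C_2^2$, so we may take $K = C_2^2$. If instead $x^* \in \set$, we may further assume $M > \lambda C_1$ (otherwise the bound is already established), which forces $\varphi(x^*) \geq M^2 - \lambda M C_1 > 0$ and hence $\varphi(x^* + z) \leq \varphi(x^*)$ for all $z \in \sop M(x^*, \cdot)$, including those for which $x^* + z \in \overline{\set}_\inte \setminus \set$ where $\varphi = 0$. Then Bony's maximum principle for the integro-differential operator yields $\deri^1 \varphi(x^*) = 0$ and $-\tr[a \deri^2 \varphi(x^*)] - \inte \varphi(x^*) \geq 0$. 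The gradient identity $\deri^2 u(x^*)\, \deri^1 u(x^*) = \tfrac{\lambda M}{2}\deri^1 u(x^*)$ read off from $\deri^1\varphi(x^*) = 0$ implies $\textstyle\sum_{ik} 4\partial_k u \partial_i u \partial_{ki} u = 2\lambda M |\deri^1 u(x^*)|^2$, and inequality \eqref{partu4} at $x^*$ collapses to
\begin{equation*}
0 \leq C |\deri^1 u(x^*)|^2 + C[M+1]|\deri^1 u(x^*)| + \lambda CM - \psi'_\varepsilon(\cdot)\bigl[\lambda M |\deri^1 u(x^*)|^2 - C|\deri^1 u(x^*)| + \lambda M g(x^*)^2\bigr].
\end{equation*}

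To close the estimate we split on the sign of $|\deri^1 u(x^*)|^2 - g(x^*)^2$. If $|\deri^1 u(x^*)|^2 \leq g(x^*)^2$, then $\varphi(x^*) \leq \|g\|^2_{\hol^0(\overline{\set})}$ and we are done. Otherwise $\psi'_\varepsilon(\cdot) \geq 0$, and choosing $\lambda$ large enough (depending only on the constant $C$ of Lemma \ref{lemfrontera1.0}) guarantees that the bracket is non-negative, so that the $-\psi'_\varepsilon$ term is non-positive; combined with the convexity inequality $\psi_\varepsilon(r) \leq \psi'_\varepsilon(r) r$ applied to the NIDD equation $\psi_\varepsilon(|\deri^1 u|^2 - g^2) = h - \Gamma u$ at $x^*$, this produces an absolute bound on $|\deri^1 u(x^*)|$, and hence on $\varphi(x^*)$, depending only on the coefficients of $\Gamma$. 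The main technical obstacle is precisely this last step: the tuning of $\lambda$ and the handling of the $\psi'_\varepsilon(\cdot)$ term so that the resulting constant $C_3$ is genuinely independent of $M$ and $\varepsilon$. The integro-differential contributions $\widetilde{\inte}_k u$ and $[\partial_k u, \partial_k u]_\inte$ arising in the derivation of \eqref{partu4} have already been absorbed into $C$ through assumption \eqref{H4}, so no further estimate on the jump part is required here.
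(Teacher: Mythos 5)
Your outline reproduces the paper's strategy faithfully through the interior/boundary dichotomy, the gradient identity $\sum_{ik}4\partial_k u\partial_i u\partial_{ki}u = 2\lambda M|\deri^1 u(x^*)|^2$, Bony's maximum principle, and the reduction to \eqref{partu4}. The gap is in the final case split and how the $\psi'_\varepsilon$ term is used. You split on the sign of $|\deri^1 u(x^*)|^2 - g(x^*)^2$; the paper splits on $\psi'_\varepsilon(\cdot)\lessgtr 1$. Your split is insufficient: when $|\deri^1 u(x^*)|^2 - g(x^*)^2$ is positive but small, $\psi'_\varepsilon(\cdot)$ can be arbitrarily close to $0$, and then the negative term $-\psi'_\varepsilon(\cdot)[\,\cdot\,]$ in \eqref{partu4} is negligible, so nothing in your inequality bounds $|\deri^1 u(x^*)|$. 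Moreover, the argument you actually describe --- ``choose $\lambda$ large so the bracket is non-negative, so the $-\psi'_\varepsilon$ term is non-positive'' --- is backwards: if you make that term non-positive and then discard it, you are left with $0\leq C|\deri^1 u|^2 + C[M+1]|\deri^1 u| + \lambda CM$, which is vacuous since every summand is $\geq 0$. The negative term must be \emph{retained}, not discarded, and you need a lower bound on $\psi'_\varepsilon(\cdot)$ for it to absorb $C|\deri^1 u|^2$. Your appeal to the convexity inequality $\psi_\varepsilon(r)\leq\psi'_\varepsilon(r)r$ together with the NIDD equation is too vague to supply the missing lower bound.

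The correct closing move (which is what the paper does) is: if $\psi'_\varepsilon(|\deri^1 u(x^*)|^2 - g(x^*)^2) < 1$, then since $\psi'_\varepsilon\equiv\varepsilon^{-1}>1$ on $[2\varepsilon,\infty)$, the argument satisfies $|\deri^1 u(x^*)|^2 - g(x^*)^2 < 2\varepsilon$, giving $|\deri^1 u(x^*)|^2 \leq \|g^2\|_{\hol^0(\set)} + 2$ directly --- this is where the piecewise structure of $\psi_\varepsilon$ in \eqref{p12.1} is essential and your proposal never invokes it. If instead $\psi'_\varepsilon(\cdot)\geq 1$, then the bracket in \eqref{partu4} (which after inserting the gradient identity contains $\lambda M|\deri^1 u(x^*)|^2$) is scaled by at least $1$, so choosing $\lambda\geq \max\{1,C\}$ makes the quadratic coefficient $C-\lambda M\leq 0$ dominate and yields the bound on $|\deri^1 u(x^*)|$. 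You then feed this into the relation $M^2 \leq |\deri^1 u(x^*)|^2 + 2\lambda M C_1$ from Lemma \ref{Lb1}. You should replace your sign-of-$(|\deri^1 u|^2-g^2)$ dichotomy with the $\psi'_\varepsilon\lessgtr 1$ dichotomy and make explicit the use of $\psi_\varepsilon(r)=\frac{r-\varepsilon}{\varepsilon}$ for $r\geq 2\varepsilon$.
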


\begin{proof}
Let  $\varphi$ and $M_{\varepsilon}$ be as in Lemma \ref{lemfrontera1.0}, with  $\lambda\geq1$ a constant that shall be selected later on. Observe that if $M_{\varepsilon}\leq1$, we obtain a bound for $M_{\varepsilon}$ that is independent of $\varepsilon$.  We assume henceforth that $M_{\varepsilon}>1$. Let $x^{*}\in\overline{\set}$ be a point where $\varphi$ attains its maximum on $\set$. Then,
\begin{align}\label{partu5}
|\deri^{1}u(x)|^{2}\leq |\deri^{1}u(x^{*})|^{2}+\lambda M_{\varepsilon}[u(x)-u(x^{*})]\leq|\deri^{1}u(x^{*})|^{2}+\lambda M_{\varepsilon}C_{1},
\end{align}
for all $x\in \overline{\set}$. The last inequality in  \eqref{partu5}  is obtained from Lemma \ref{Lb1}.  If $x^{*}\in\partial \set$,  by  Lemma \ref{Lb1}, it is easy to  deduce  $\varphi(x^{*})=|\deri^{1}u(x^{*})|^{2}\leq dC_{1}^{2}$. Then, from  \eqref{partu5}, $|\deri^{1}u|^{2}\leq dC_{1}^{2}+\lambda M_{\varepsilon} C_{1}$ in $\overline{\set}$. Notice that for all $\epsilon$, there exists $x_{0}\in\overline{\set}$ such that $[M_{\varepsilon}-\epsilon]^{2}\leq|\deri^{1} u(x_{0})|^{2}$. Then,
\begin{align}\label{partu5.0}
[M_{\varepsilon}-\epsilon]^{2}\leq dC^{2}_{1}+\lambda M_{\varepsilon} C_{1}.
\end{align}
Letting $\epsilon\rightarrow0$ in  \eqref{partu5.0}, it follows that
$|\deri^{1}u|\leq M_{\varepsilon}\leq \frac{dC^{2}_{1}}{M_{\varepsilon}}+\lambda C_{1}\leq dC^{2}_{1}+\lambda C_{1}$, since $M_{\varepsilon}>1$. Let $x^{*}$ be in $\set$. We have $\tr[a(x^{*})\deri^{2}\varphi(x^{*})]\leq0$, $\varphi(x^{*})\geq\varphi(x^{*}+z)$ for $x^{*}+z\in\overline{\set}_{\inted}$ and $\partial_{i}\varphi(x^{*})=\partial_{i}|\deri^{1}u(x^{*})|^{2}-\lambda M_{\varepsilon}\partial_{i}u(x^{*})=0$. Then, $0\leq-\tr[a\deri^{2}\varphi]-\inte\varphi$ and $ 2\langle\deri^{1}u,\deri^{1}|\deri^{1}u|^{2}\rangle=2\lambda M_{\varepsilon}|\deri^{1}u|^{2}$ at $x^{*}$. From here, using Lemma \ref{lemfrontera1.0} and since $\psi'_{\varepsilon}(\cdot)\geq0$, it follows that
\begin{align}\label{partu6}
0&\leq C_{2}|\deri^{1}u|^{2}+C_{2} [1+M_{\varepsilon}[1+\lambda]]|\deri^{1}u|\notag\\
&\quad+\lambda C_{2} M_{\varepsilon}-\psi'_{\varepsilon}(\cdot)[\lambda M_{\varepsilon}|\deri^{1}u|^{2}-C_{2}|\deri^{1}u|+\lambda M_{\varepsilon} g^{2}],\ \text{at}\ x^{*}, 
\end{align}
where $C_{2}$ is as in Lemma \ref{lemfrontera1.0}. If $\psi'_{\varepsilon}(\cdot)<1<\frac{1}{\varepsilon}$, by definition of $\psi_{\varepsilon}$, given in  \eqref{p12.1}, we obtain that $\psi_{\varepsilon}(\cdot)\leq1$. 
It follows that 
$|\deri^{1}u(x^{*})|^{2}\leq 2+\Lambda^{2}$.
Then, by  \eqref{partu5}  and arguing as  in  \eqref{partu5.0}, we obtain $M_{\varepsilon}\leq 2+\Lambda^{2}+\lambda C_{1}$. If $\psi'_{\varepsilon}(\cdot)\geq1$, then, multiplying by $\frac{1}{M_{\varepsilon}\psi_{\varepsilon}'(\cdot)}$ in \eqref{partu6}, it can be verified that 
\begin{equation} \label{partu7}
0\leq [C_{2}-\lambda]|\deri^{1}u|^{2}+C_{2}[3+\lambda]|\deri^{1}u|+\lambda C_{2},\  \text{at}\ x^{*}.
\end{equation}
 {Notice that  {this} inequality is satisfied for any $
\lambda\geq1$ fixed, where the maximum point of $\varphi$, $x^{*}\in\set$, depends on $\lambda$. Then, taking $\lambda\geq\max\{1,C_{2}\}$ fixed, from \eqref{partu7}, it follows that $|\deri^{1}u(x^{*})|<K_{5}$, for some $K_{5}=K_{5}(d,\Lambda,\nu,s,\alpha',\lambda)$.} Using  \eqref{partu5} and an argument similar to \eqref{partu5.0}, we conclude that there exists $C_{3}=C_{3}(d,\Lambda,\nu,s,\alpha',\lambda)$ such that   $|\deri^{1}u|\leq M_{\varepsilon}\leq C_{3}$ on $\overline{\set}$.
\end{proof}

By the previous results seen here and using  \eqref{es.1}, we obtain the following estimate  of $u^{\varepsilon}$ on the space $(\hol^{1,\alpha'}(\overline{\set}),||\cdot||_{\hol^{1,\alpha'}(\overline{\set})})$.

\begin{lema}\label{gr2}
If $u^{\varepsilon}\in\hol^{3}(\overline{\set})\cap\hol^{2}(\overline{\set}_{\inted})$ is a solution to the NPIDD problem \eqref{p13.0}, there exists $C_{4}=C_{4}(\varepsilon,\Lambda,\nu,s,\alpha')$ such that
$||u^{\varepsilon}||_{\hol^{1,\alpha'}(\overline{\set})}\leq C_{4}$.
\end{lema}
\begin{proof}
By Remark \ref{dep.1} and Lemma \ref{gr1}, it yields $||u||_{\hol^{1,\alpha'}(\overline{\set})}\leq C\int_{\set}\der x[\Lambda+K_{6}[C_{3}^{2}+\Lambda^{2}+1]]$ for some $C=C(\Lambda,\nu,s,\alpha')$ and  $K_{6}=K_{6}(\varepsilon,\Lambda)$. Taking $C_{4}\eqdef C\int_{\set}\der x[\Lambda+K_{6}[C_{3}^{2}+\Lambda^{2}+1]]$, it follows that $||u||_{\hol^{1,\alpha'}(\overline{\set})}\leq C_{4}$ on $\overline{\set}$.
\end{proof}

\subsection{Proof of Proposition \ref{princ1.0}}\label{pp1}

In this  subsection, we present the proof  of the existence, uniqueness and regularity of the solution $u^{\varepsilon}$ to the NPIDD problem \eqref{p13.0}. 

\begin{proof}[Proof of Proposition \ref{princ1.0}]
Let $\varepsilon\in(0,1)$ be fixed. Observe that the following LPIDD problem 
\begin{equation}\label{p13.0.0}
\Gamma  \mathpzc{u}=h- \psi_{\varepsilon}(|\deri^{1} w|^{2}- g^{2}),\ \text{in}\ \set,\quad\text{s.t.}\ 
   \mathpzc{u}=0,\  \text{on}\ \overline{\set}_{\inted}\setminus\set  ,
\end{equation}
has a unique solution $\mathpzc{u}\in\hol^{2,\alpha'}(\overline{\set})$, for each $w\in\hol^{1,\alpha'}(\overline{\set})$, since $h- \psi_{\varepsilon}(|\deri^{1} w|^{2}- g^{2})\in\hol^{0,\alpha'}(\overline{\set})$, and (A1), (A3) and (A4) hold; see \cite[Thm. 3.1.12]{garroni}. Defining the map $T:\hol^{1,\alpha'}(\overline{\set})\longrightarrow\hol^{2,\alpha'}(\overline{\set})$ as 
$T[w]=\mathpzc{u}$, for each $w\in\hol^{1,\alpha'}(\overline{\set})$, where $\mathpzc{u}$ is the solution to the LPIDD problem \eqref{p13.0.0}, we see that $T$ is well defined. Notice that $T$ is continuous and maps bounded sets in $\hol^{1,\alpha'}(\overline{\set})$ into bounded sets in $\hol^{2,\alpha'}(\overline{\set})$ which are pre-compact in the H\"older space  $(\hol^{1,\alpha'}(\overline{\set}),||\cdot||_{\hol^{1,\alpha'}(\overline{\set})})$; see \cite[Thm. 16.2.2]{Cs1}. To use the Schaefer fixed point Theorem; see \cite[Thm. 4, p. 539]{evans2}, we need to verify that the set 
$\widetilde{\mathcal{A}}\eqdef\{w\in\hol^{1,\alpha'}(\overline{\set}): \rho T[w]=w,\ \text{for some}\ \rho\in[0,1]\}$, is bounded uniformly, i.e. $||w||_{\hol^{1,\alpha'}(\overline{\set})}\leq C$ for all $w\in\widetilde{\mathcal{A}}$, where $C$ is some positive constant which is independent of $w$ and $\rho$. Let $w$ be in $\widetilde{\mathcal{A}}$. Notice that if $\rho=0$,  {then} it follows immediately that $w=0$. So, assume $w\in\hol^{1,\alpha'}(\overline{\set})$ such that $ T[w]=\frac{w}{\rho}$, for some $\rho\in(0,1]$; or, in other words, $w\in\hol^{2,\alpha'}(\overline{\set})$, since the map $T$ is defined from $\hol^{1,\alpha'}(\overline{\set})$ to $\hol^{2,\alpha'}(\overline{\set})$, and  
\begin{equation}\label{p13.0.1}
\Gamma  w=\rho[h-\psi_{\varepsilon}(|\deri^{1}w|^{2}- g^{2})],\ \text{in}\ \set,\quad\text{s.t.}\ 
   w=0,\ \text{on}\ \overline{\set}_{\inted}\setminus\set.
\end{equation}
Taking
 $f\eqdef \rho[ h-\psi_{\varepsilon}(|\deri^{1}w|^{2}- g^{2})]+\inte w$,  from (A1), (A4), Lemma \ref{g1} and since $\rho[h-\psi_{\varepsilon}(|\deri^{1}w|^{2}- g^{2})]\in\hol^{1,\alpha'}(\overline{\set})$, we have that $f\in\hol^{1,\alpha'}(\overline{\set})$. Then, the linear Dirichlet problem 
\begin{equation}\label{p13.0.3}
\dif\tilde{v}=f,\ \text{in}\ \set,\quad\text{s.t.}\ 
 \tilde{v}=0, \ \text{on}\ \partial \set,
\end{equation}
has a unique solution $\tilde{v}\in\hol^{3,\alpha'}(\overline{\set})$, since (A3) holds  {and the boundary $\partial\set$ is of class $\hol^{3,\alpha'}$}; see \cite[Thms. 6.14, 9.19, pp. 107, 244, respectively]{gilb}. Recall that the elliptic differential operator $\dif$ is defined in \eqref{p6.1}. From \eqref{p6} and \eqref{p13.0.1}--\eqref{p13.0.3}, it follows that 
\begin{equation*}
  \dif w=\dif \tilde{v},\ \text{in}\ \set,\quad\text{s.t.}\ 
  w=\tilde{v},\ \text{on}\ \partial\set.
\end{equation*}
 {From here and using \cite[Thm. 6.14 p. 107]{gilb}, $w=\tilde{v}$ in $\set$, and hence $w\in\hol^{3,\alpha'}(\overline{\set})$}. Therefore $\widetilde{\mathcal{A}}\subset\hol^{3,\alpha'}(\overline{\set})$. Now,  applying  similar arguments, seen in proofs of Lemmas \ref{Lb1}, \ref{gr1} and \ref{gr2},  to \eqref{p13.0.1}, it can be verified that $0\leq w\leq C_{1}$, $|\deri^{1}w|\leq C_{3}$, on $\overline{\set}$, and $||w||_{\hol^{1,\alpha'}(\overline{\set})}\leq C_{4}$, where $C_{1}, C_{3}, C_{4}$ are positive constants as in  Lemmas \ref{Lb1}, \ref{gr1} and \ref{gr2}, respectively. Notice that these constants are independent of $\rho$ and $w$. This means that $\widetilde{\mathcal{A}}$ is bounded uniformly on $(\hol^{1,\alpha'}(\overline{\set}),||\cdot||_{\hol^{1,\alpha'}(\overline{\set})})$. Since $T$ is a continuous and compact mapping from the Banach space $(\hol^{1,\alpha'}(\overline{\set}),||\cdot||_{\hol^{1,\alpha'}(\overline{\set})})$ to itself and the set $\widetilde{\mathcal{A}}$ is bounded uniformly, we conclude, by the Schaefer fixed point Theorem, there exists a fixed point $u^{\varepsilon}\in\hol^{1,\alpha'}(\overline{\set})$ to the problem $T[u^{\varepsilon}]=u^{\varepsilon}$ which satisfies the NPIDD problem \eqref{p13.0}.   {In addition}, we have $u^{\varepsilon}=T[u^{\varepsilon}]\in\hol^{2,\alpha'}(\overline{\set})$ and by  similar arguments seen previously, it can be shown that  $u^{\varepsilon}$ is non-negative and belongs to $\hol^{3,\alpha'}(\overline{\set})$. The uniqueness of the solution $u^{\varepsilon}$ to the problem \eqref{p13.0}, is obtained from Lemma \ref{supsuper}. With this remark we finish the proof.  
\end{proof}

\section{Existence and uniqueness of the HJB equation}\label{pp2}

Since $u^{\varepsilon}$  satisfies Lemmas \ref{Lb1} and \ref{gr1}, for each $\varepsilon\in(0,1)$, and the constants that appear in these Lemmas are independent of $\varepsilon$, we only need  to show that $\psi_{\varepsilon}(|\deri^{1}u^{\varepsilon}|^{2}- g^{2})$ is locally bounded by a positive constant independent of $\varepsilon$; see Lemma \ref{cotaphi}. This estimate implies  that $u^{\varepsilon}$ is locally bounded uniformly in $\varepsilon$, i.e., $||u^{\varepsilon}||_{\sob^{2,p}(B_{\beta r})}\leq C$ where $B_{\beta r}\subset\set$ is an open ball with radius $\beta r$, where $\beta\in(0,1]$ and $r>0$;  see Lemmas \ref{lemafrontera3} and \ref{lemafrontera4}. From here, we extract a convergent sub-sequence $\{u^{\varepsilon_{\kappa}}\}_{\kappa\geq1}$ of $\{u^{\varepsilon}\}_{\varepsilon\in(0,1)}$, whose limit function is the solution to the HJB equation \eqref{p1}; see Subsection \ref{proofHJB1}.   

\subsection{Some local properties of the solution to the NPIDD problem}

Before showing that $\psi_{\varepsilon}(|\deri^{1}u^{\varepsilon}|^{2}- g^{2})\leq C$ is   locally bounded by  some  constant $C$ independent of $\varepsilon$, we need  to define an auxiliary function $\phi$, which   satisfies \eqref{gr3.2} on $B_{\beta' r}$, with $\beta'$ as in Remark \ref{R1}. In particular, \eqref{gr3.2} is true, when $\phi$ is evaluated at its maximum $x^{*}\in B_{\beta' r}\subset\set$, which helps us to prove Lemma \ref{cotaphi}.  

\begin{rem}\label{R1}
Let $\overline{\set}^{\prime}_{\inted}$ be the interior set of $\overline{\set}_{\inted}$. In Lemmas \ref{gr3}--\ref{lemafrontera4} and their proofs, we  consider cut-off functions $\xi\in\hol^{\infty}_{\comp}(\overline{\set}^{\prime}_{\inted})$  which satisfy $0\leq\xi\leq1$, $\xi=1$ on the open ball $B_{\beta r}\subset B_{\beta'r}\subset\set$ and $\xi=0$ on $\overline{\set}^{\prime}_{\inted} \setminus B_{\beta' r}$, with $r>0$, $\beta'= \frac{\beta+1}{2}$ and $\beta\in(0,1]$.  It is  {also} assumed that $||\xi||_{\hol^{2}(\overline{B_{\beta r}})}\leq K_{7}$, where $K_{7}>0$ is a constant independent of $\varepsilon$.
\end{rem}

\begin{lema}\label{gr3}
If 
$\phi\eqdef\xi\psi_{\varepsilon}(|\deri^{1}u^{\varepsilon}|^{2}- g^{2})$ on $\overline{\set}^{\prime}_{\inted}$, then
\begin{align}\label{gr3.2}
\tr[a\deri^{2}\phi]&\geq\psi'_{\varepsilon}(\cdot)[\theta\xi|\deri^{2}u^{\varepsilon}|^{2}-C_{5}|\deri^{2}u^{\varepsilon}|-C_{5}]\notag\\
&\quad-\xi\psi'_{\varepsilon}(\cdot)\inte|\deri^{1}u^{\varepsilon}|^{2}+2\psi'_{\varepsilon}(\cdot)\langle\deri^{1}\phi,\deri^{1}u^{\varepsilon}\rangle,\ \text{on}\ B_{\beta' r},
\end{align}
 for some positive constant $C_{5}$ independent of $\varepsilon$, where $\theta$ is given in (A3).
\end{lema}

\begin{proof}
Let  $\phi$ be as in  Lemma \ref{gr3}.  First and second derivatives of $\phi$ on $B_{\beta' r}\subset\set$, are given by 
\begin{equation}\label{p7}
\begin{split}
\partial_{i}\phi&=\psi_{\varepsilon}(\cdot)\partial_{i}\xi+\xi\psi'_{\varepsilon}(\cdot)[2\langle\deri^{1}u,\deri^{1}\partial_{i}u\rangle-\partial_{i}[ g^{2}]],\\
\partial_{ij}\phi&=\psi_{\varepsilon}(\cdot)\partial_{ij}\xi+\psi'_{\varepsilon}(\cdot)[\partial_{i}\xi\partial_{j}[|\deri^{1}u|^{2}- g^{2}]+\partial_{j}\xi\partial_{i}[|\deri^{1}u|^{2}- g^{2}]]\\
&\quad+\xi\psi'_{\varepsilon}(\cdot)[2[\langle\deri^{1}\partial_{i}u,\deri^{1}\partial_{j}u\rangle+\langle\deri^{1}u,\deri^{1}\partial_{ij}u\rangle]-\partial_{ij}[ g^{2}]]+4\xi\psi''_{\varepsilon}(\cdot)\bar{\eta}_{i}\bar{\eta}_{j},
\end{split}
\end{equation}
where $\bar{\eta}=(\bar{\eta}_{1},\dots,\bar{\eta}_{d})$ with $\bar{\eta}_{i}\eqdef\langle\deri^{1}u,\deri^{1}\partial_{i}u\rangle-\frac{\partial_{i}[ g^{2}]}{2}$. Then,
\begin{align}\label{p8}
\tr[a\deri^{2}\phi]&=2\xi\psi'_{\varepsilon}(\cdot)\sum_{k}\partial_{k}u\tr[a\deri^{2}\partial_{k}u]+4\xi\psi''_{\varepsilon}(\cdot)\langle a\bar{\eta},\bar{\eta}\rangle+\psi_{\varepsilon}(\cdot)\tr[a\deri^{2}\xi]\notag\\
&\quad+\xi\psi'_{\varepsilon}(\cdot)\biggl[2\sum_{ij}a_{ij}\langle\deri^{1}\partial_{i}u,\deri^{1}\partial_{j}u\rangle-\tr[a\deri^{2}[ g^{2}]]\biggr]\notag\\
&\quad+2\psi'_{\varepsilon}(\cdot)\biggr[2\sum_{ij} a_{ij}\partial_{i}\xi\langle\deri^{1}u,\deri^{1}\partial_{j}u\rangle-\langle a\deri^{1}\xi,\deri^{1}[ g^{2}]\rangle\biggl].
\end{align}
Using \eqref{dercot2.0.0}, \eqref{der.1} and \eqref{p7}, it can be verified 
\begin{align}\label{p9}
2\xi\psi'_{\varepsilon}(\cdot)\sum_{k}\partial_{k}u\tr[a\deri^{2}\partial_{k}u]&=\psi'_{\varepsilon}(\cdot)[\xi\widetilde{\deri}_{3}u+2\langle\deri^{1}\phi,\deri^{1}u\rangle-2\psi_{\varepsilon}(\cdot)\langle\deri^{1}\xi,\deri^{1}u\rangle]\notag\\
&\quad+\xi\psi'_{\varepsilon}(\cdot)\bigg[\sum_{k}[\partial_{k}u,\partial_{k}u]_{\inted}-\inted|\deri^{1}u|^{2}\bigg],
\end{align}
with $\widetilde{\deri}_{3}u\eqdef-2\sum_{k}\partial_{k}u[\tr[[\partial_{k}a]\deri^{2}u]+\widetilde{\inted}_{k}u]-2\langle\deri^{1}u,\deri^{1}[h-\langle b,\deri^{1}u\rangle-cu]\rangle$. Observe that 
\begin{equation}\label{p.9}
\xi\widetilde{\deri}_{3}u\geq -2d\Lambda C_{3}[d[d+1]|\deri^{2}u|+2dC_{3}+C_{1}+1]-2dK_{2}C_{\nu}C_{3}[C_{3}+2C_{1}],
\end{equation}
where $\Lambda,C_{\nu},  K_{2}, C_{1},C_{3},K_{7}$ are as in (A1), (A4), \eqref{lips4}, Lemmas \ref{Lb1}, \ref{gr1}, and Remark \ref{R1}, respectively. By \eqref{p6.1} and \eqref{p13.0}, it yields $\psi_{\varepsilon}(\cdot)=h+\tr[a\deri^{2}u]-\langle b,\deri^{1}u\rangle -cu+\inted u$ on $\set$. From here, 
we see that
\begin{align}\label{p9.0}
\psi_{\varepsilon}(\cdot)&\leq d^{2}\Lambda |\deri^{2}u|+\Lambda[1+dC_{3}]+C_{\nu}[C_{3}+2C_{1}].
\end{align}
Then,
\begin{align}\label{p.10.0}
-2\psi_{\varepsilon}(\cdot)\langle\deri^{1}\xi,\deri^{1}u\rangle&\geq -2dC_{3}K_{7}[d^{2}\Lambda |\deri^{2}u|+\Lambda[1+dC_{3}]+C_{\nu}[C_{3}+2C_{1}]].
\end{align}
Since $[\partial_{k}u,\partial_{k}u]_{\inted}\geq0$, we get 
\begin{align}
\xi\psi'_{\varepsilon}(\cdot)\bigg[\sum_{k}[\partial_{k}u,\partial_{k}u]_{\inted}-\inted|\deri^{1}u|^{2}\bigg]&\geq-\xi\psi'_{\varepsilon}(\cdot)\inted|\deri^{1}u|^{2},
\end{align}
 {Meanwhile}, by (A1) and (A3), it follows  {that}
\begin{equation}
\begin{split}
&4\xi\psi''_{\varepsilon}(\cdot)\langle a\bar{\eta},\bar{\eta}\rangle\geq4\xi\psi''_{\varepsilon}(\cdot)\theta|\bar{\eta}|^{2}\geq0,\\
&\xi\psi'_{\varepsilon}(\cdot)\biggl[2\sum_{k}\langle a\deri^{1}\partial_{k}u,\deri^{1}\partial_{k}u\rangle-\tr[a\deri^{2}[ g^{2}]]\biggr]\geq\psi'_{\varepsilon}(\cdot)[\xi\theta|\deri^{2}u|^{2}-4d^{2}\Lambda^{3}].
\end{split}
\end{equation}
Since $\psi_{\varepsilon}(r)\leq\psi'_{\varepsilon}(r)r$, for all $r\in\R$, and from Lemmas \ref{Lb1}, \ref{gr1} and Remark \ref{R1},  we get 
\begin{equation}\label{p10}
\begin{split}
&\psi_{\varepsilon}(\cdot)\tr[a\deri^{2}\xi]\geq-\psi_{\varepsilon}(\cdot)d^{2}\Lambda K_{7}\geq-\psi'_{\varepsilon}(\cdot)d^{2}K_{7}\Lambda C_{3}^{2},\\
&2\psi'_{\varepsilon}(\cdot)\biggr[2\sum_{ij} a_{ij}\partial_{i}\xi\langle\deri^{1}u,\deri^{1}\partial_{j}u\rangle-\langle a\deri^{1}\xi,\deri^{1}[ g^{2}]\rangle\biggl]\geq-4\psi'_{\varepsilon}(\cdot)d^{3}\Lambda K_{7}[C_{3}|\deri^{2}u|+\Lambda^{2}].
\end{split}
\end{equation}
Applying \eqref{p9}, \eqref{p.9}, \eqref{p.10.0}--\eqref{p10} in \eqref{p8}, we obtain the inequality given in \eqref{gr3.2}, with $C_{5}=C_{5}(d,\Lambda,\nu,s,\alpha',K_{7})$. 
\end{proof}

\begin{lema}\label{cotaphi}
Let $\phi$ be as in Lemma \ref{gr3}. There exists a positive constant  $C_{6}$ independent of $\varepsilon$ such that $\phi\leq C_{6}$ on $B_{\beta' r}\subset\set$, for $r>0$ and $\beta'$ as in Remark \ref{R1}.
  \end{lema}

\begin{proof}
 Taking $x^{*}\in\overline{B}_{\beta'r}$ as a point where $\phi$ attains its maximum on $B_{\beta'r}$, it  suffices to bound $\phi(x^{*})$ by a constant independent of $\varepsilon$.  If $x^{*}\in\partial B_{\beta'r}$, then $\phi(x)\leq\phi(x^{*})=0$. Let  $x^{*}$ be in $B_{\beta'r}$. Observe, if  $|\deri^{1}u(x^{*})|^{2}-g(x^{*})^{2}<2\varepsilon$, from \eqref{p12.1}, we  see that $\phi(x)\leq\phi(x^{*})=\xi(x^{*})\psi_{\varepsilon}(|\deri^{1}u(x^{*})|^{2}-g(x^{*})^{2})\leq1$ on $B_{\beta'r}$. Therefore,  we obtain the result of Lemma \ref{cotaphi}. Assume that $|\deri^{1}u(x^{*})|^{2}-g(x^{*})^{2}\geq2\varepsilon$. Since $x^{*}\in B_{\beta'r}$, we know that
\begin{equation}\label{cphi1}
\begin{split}
&\deri^{1} \phi(x^{*})=0,\ \tr[a(x^{*})\deri^{2}\phi(x^{*})]\leq0,\\ &\phi(x^{*}+z)-\phi(x^{*})\leq0, \ \text{for $z$ with $x^{*}+z\in \overline{\set}_{\inted}$}.
\end{split}
\end{equation}
Then, evaluating $x^{*}$ in \eqref{gr3.2} and  by $\psi'_{\varepsilon}(\cdot)=1/\varepsilon$ at $x^{*}$; see \eqref{p12.1}, we get
\begin{align}\label{cotI1}
0&\geq\frac{1}{\varepsilon}[\theta\xi|\deri^{2}u|^{2}-C_{5}|\deri^{2}u|-C_{5}]-\frac{\xi}{\varepsilon}\inted|\deri^{1}u|^{2},\ \text{at}\ x^{*},
\end{align}
where $C_{5}$ as in Lemma \ref{gr3}.  {Meanwhile}, note that 
\begin{equation}\label{cotI3}
-\frac{\xi}{\varepsilon}\inted|\deri^{1}u|^{2}=-\frac{1}{\varepsilon}[\xi\inted_{\mathcal{B}}|\deri^{1}u|^{2}+\xi\inted_{\mathcal{B}^{\comp}}|\deri^{1}u|^{2}],\ \text{at}\ x^{*},
\end{equation}
where $\inted_{\mathcal{C}}|\deri^{1}u|^{2}\eqdef\int_{\mathcal{C}}[|\deri^{1}u(\cdot+z)|^{2}-|\deri^{1}u|^{2}]s(\cdot,z)\nu(\der z)$, with $\mathcal{C}\subseteq\R^{d}_{*}$,  and $\mathcal{B}\eqdef\{z\in\R^{d}_{*}:|\deri^{1}u(\cdot+z)|^{2}-[g(\cdot+z)]^{2}\leq|\deri^{1}u|^{2}- g^{2},\ \text{at}\ x^{*}\}$. By Remark \ref{s0} and Lemma \ref{g1}.i, the operator $\inted_{\mathcal{C}}$ is well defined. Now, from (A1), (A4) and Mean Value Theorem, it yields 
\begin{multline}	\label{cotI4}
-\frac{\xi}{\varepsilon}\inted_{\mathcal{B}}|\deri^{1}u|^{2}\geq-\frac{1}{\varepsilon}\bigg[\int_{\mathcal{B}\cap\{|z|\in(0,1)\}}\bigg[\int_{0}^{1}|\deri^{1}[[g(\cdot+tz)]^{2}]|\der t\bigg]|z|s(\cdot,z)\nu(\der z)\\
+\int_{\mathcal{B}\cap\{|z|\geq1\}}|[g(\cdot+z)]^{2}- g^{2}|s(\cdot,z)\nu(\der z)\bigg] \geq-\frac{2C_{\nu}\Lambda^{2}}{\varepsilon}\big[1+d^{\frac{1}{2}}\big],\ \text{at}\  x^{*},
\end{multline}
with $\Lambda,C_{\nu}$ as in (A1), \eqref{H4}, respectively. By Lemma \ref{g1}.ii, it follows that
\begin{equation}
-\frac{\xi}{\varepsilon}\inted_{\mathcal{B}^{\comp}}|\deri^{1}u|^{2}
=\frac{1}{\varepsilon}\biggl[-\inte_{\mathcal{B}^{\comp}}[\xi|\deri^{1}u|^{2}]+|\deri^{1}u|^{2}\inte_{\mathcal{B}^{\comp}}\xi+\sum_{i}[\xi,[\partial_{i}u]^{2}]_{\inted_{\mathcal{B}^{\comp}}}
\biggr],\ \text{at}\ x^{*},
\end{equation} 
where $[\xi,[\partial_{i}u]^{2}]_{\inted_{\mathcal{B}^{\comp}}}
\eqdef\int_{\mathcal{B}^{\comp}}[\xi(\cdot+z)-\xi][[\partial_{i}u(\cdot+z)]^{2}-[\partial_{i}u]^{2}]s(\cdot,z)\nu(\der z)$. Proceeding in a similar way that in \eqref{cotI4}, and using Lemma \ref{gr1} and Remark \ref{R1}, it is easy to verify that 
\begin{equation}
|\deri^{1}u|^{2}\inte_{\mathcal{B}^{\comp}}\xi+\sum_{i}[\xi,[\partial_{i}u]^{2}]_{\inted_{\mathcal{B}^{\comp}}}\geq-3K_{7}C_{\nu}C_{3}^{2}\big[2+d^{\frac{1}{2}}\big],\  \text{at}\ x^{*}. 
\end{equation} 
If $z\in\mathcal{B}^{\comp}$, then, from \eqref{p12.1} and \eqref{cphi1}, it can be verified that 
\begin{equation*}
\frac{1}{\varepsilon}[\xi(\cdot+z)|\deri^{1}u(\cdot+z)|^{2}-\xi|\deri^{2}u|^{2}]\leq\frac{1}{\varepsilon}[\xi(\cdot+z) [g(\cdot+z)]^{2}-\xi  g^{2}]-[\xi(\cdot+z)-\xi],\ \text{at}\ x^{*}.  
\end{equation*}
Then, proceeding as before,
\begin{align}\label{cotI2}
-\frac{1}{\varepsilon}\inte_{\mathcal{B}^{\comp}}[\xi|\deri^{1}u|^{2}]\geq-\frac{\Lambda^{2}K_{7}C_{\nu}}{\varepsilon}\big[2+3d^{\frac{1}{2}}\big]-K_{7}C_{\nu}\big[2+d^{\frac{1}{2}}\big],\ \text{at}\ x^{*}.
\end{align}
Applying \eqref{cotI3}--\eqref{cotI2} in \eqref{cotI1}, we obtain 
$0\geq \theta\xi|\deri^{2}u|^{2}-K_{8}|\deri^{2}u|-K_{8}[2+\varepsilon]$ at $x^{*}$,
for some $K_{8}=K_{8}(d,\Lambda,\nu,s,\alpha')$. Then, $[|\deri^{2}u(x^{*})|-K_{9}][|\deri^{2}u(x^{*})|-K_{10}]\leq0$,
where
$K_{9}\eqdef\frac{K_{8}+[K_{8}^{2}+4\theta K_{8}\xi(x^{*})[2+\varepsilon]]^{\frac{1}{2}}}{2\theta\xi(x^{*})}$ and $K_{10}\eqdef\frac{K_{8}-[K_{8}^{2}+4\theta K_{8}\xi(x^{*})[2+\varepsilon]]^{\frac{1}{2}}}{2\theta\xi(x^{*})}$. Notice that $K_{10}<0<K_{9}$. 
This implies that
$|\deri^{2}u(x^{*})|\leq\frac{K_{8}+[K_{8}^{2}+12\theta K_{8}]^{\frac{1}{2}}}{2\theta\xi(x^{*})}$. From here and \eqref{p9.0}, 
\begin{align*}
\phi(x)&=\xi(x)\psi_{\varepsilon}(|\deri^{1}u^{\varepsilon}(x)|^{2}-g(x)^{2})\notag\\
&\leq\xi(x^{*})[d^{2}\Lambda|\deri^{2}u(x^{*})|+K_{11}]\leq\frac{d^{2}\Lambda[K_{8}+[K_{8}^{2}+12\theta K_{8}]^{\frac{1}{2}}]}{2\theta}+K_{11},
\end{align*}
with $K_{11}\eqdef\Lambda[1+dC_{3}]+C_{\nu}[C_{3}+2C_{1}]$.
We conclude that  $\phi\leq C_{6}$ on $B_{\beta'r}$ with some constant 
$C_{6}=C_{6}(d,\Lambda,\nu,s,\alpha',K_{7})$.
\end{proof}
From Lemmas \ref{Lb1}, \ref{gr1}, \ref{cotaphi},  the following estimate is obtained in $\Lp^{p}_{\loc}(\set)$.

\begin{lema}\label{lemafrontera3}
Let $p\in (1,\infty)$. There exists a positive constant $C_{7}$ independent of $\varepsilon$ such that $||\deri^{2}u^{\varepsilon}||_{\Lp^{p}(B_{\beta r})}\leq C_{7}$, for $\beta\in(0,1]$ and $r>0$.
\end{lema}

\begin{proof}
Taking $w\eqdef\xi u$, we obtain $||\deri^{2}u||_{\Lp^{p}(B_{\beta r})}\leq ||\deri^{2}w||_{\Lp^{p}(B_{\beta' r})}$, with $B_{\beta' r}\subset\set$, $p\in(1,\infty)$, $r>0$ and $\beta'$ as in Remark \ref{R1}.  {By calculating the} first and second derivatives of $w$ in $B_{\beta'r}$, $\partial_{i}w=u\partial_{i}\xi+\xi\partial_{i}u$,  $\partial_{ji}w=\partial_{j}u\partial_{i}\xi+u\partial_{ji}\xi+\partial_{i}u\partial_{j}\xi+\xi\partial_{ji}u$, and from  \eqref{p13.0}, we get
\begin{equation}\label{dirif1}
\dif w=f,\ \text{in}\ B_{\beta'r}, \quad\text{s.t.}\  
w=0,\ \text{on}\ \partial B_{\beta'r},
\end{equation}
where  $f\eqdef\xi[h+\inte u-\psi_{\varepsilon}(|\deri^{1} u|^{2}- g^{2})]-u[\tr[a\deri^{2}\xi]-\langle\deri^{1} \xi,b\rangle]-2\langle a\deri^{1}\xi,\deri^{1} u\rangle$. We know that for  the linear Dirichlet problem  \eqref{dirif1}  (see \cite[Lemma 3.1]{lenh}), $||\deri^{2}w||_{\Lp^{p}(B_{\beta' r})}\leq K_{12}||f||_{\Lp^{p}(B_{\beta' r})}$  for some  $K_{12}=K_{12}(d,\Lambda,p,\beta',r)$. Estimating the terms of $f$ with the norm $||\cdot||_{\Lp^{p}(B_{\beta' r})}$ and using (A1)--(A4) and Lemmas \ref{Lb1}, \ref{gr1}, \ref{cotaphi},  it follows that there exists $C_{7}=C_{7}(d,\Lambda,\nu,s,\alpha',p,\beta,r)$ such that $||\deri^{2}u||_{\Lp^{p}(B_{\beta r})}\leq ||\deri^{2}w||_{\Lp^{p}(B_{\beta' r})}\leq C_{7}$.
\end{proof}

By Lemmas \ref{Lb1}, \ref{gr1}, \ref{cotaphi} and \ref{lemafrontera3},  the following result  can be easily verified, and the proof is omitted.

\begin{lema}\label{lemafrontera4}
Let $p\in(1,\infty)$. There exists  a positive constant $C_{8}$ independent of $\varepsilon$ such that $||u^{\varepsilon}||_{\text{W}^{2,p}(B_{\beta r})}\leq C_{8}$, for $\beta\in(0,1]$ and $r>0$.
\end{lema}
\subsection{Proof of Theorem \ref{princ1.0.1}}\label{proofHJB1} 

This subsection is devoted to proving Theorem \ref{princ1.0.1}. Let $p\in(1,\infty)$ be fixed, by Lemmas \ref{Lb1}, \ref{gr1}, and \ref{cotaphi}--\ref{lemafrontera4}, we obtain that for each open ball $B_{\beta r}\subset\set$,  $\beta\in(0,1]$ and $r>0$, there exist positive constants $C_{9},C_{10}$ independent of $\varepsilon$ such that 
\begin{align}
||u^{\varepsilon}||_{\sob^{1,\infty}(\set)}&<C_{9}\quad\text{and}\quad ||u^{\varepsilon}||_{\sob^{2,p}(B_{\beta r})}<C_{10}.\label{sob1}
\end{align}
Taking $p\in(d,\infty)$ fixed, from \eqref{sob1} and the Sobolev embedding  Theorem, we have that for each open ball $B_{\beta r}\subset \set$, there exists a positive constant $C_{11}$ independent of $\varepsilon$ such that
\begin{equation}\label{hol.1.1.0.1}
||u^{\varepsilon}||_{\hol^{1,\alpha}(\overline{B}_{\beta r})}\leq C_{11},\ \text{with}\ \alpha=1-\frac{d}{p}.
\end{equation}
 Using Arzel\`a-Ascoli Theorem, the reflexivity of $\Lp^{p}_{\loc}(\set)$; see \cite[Thm. 7.25, p. 158 and Thm. 2.46, p. 49, respectively]{rudin, adams},  and \eqref{sob1}--\eqref{hol.1.1.0.1}, we get that there exists a sub-sequence $\{u^{\varepsilon_{\kappa}}\}_{\kappa\geq1}$ of $\{u^{\varepsilon}\}_{\varepsilon\in(0,1)}$,  and $\tilde{u}\in\hol^{0,1}(\overline{\set})\cap\sob^{2,p}_{\loc}(\set)$ such that $u^{\varepsilon_{\kappa}}\underset{\varepsilon_{\kappa}\rightarrow0}{\longrightarrow}\tilde{u}$ in $\hol(\overline{\set})$, $\partial_{i}u^{\varepsilon_{\kappa}}\underset{\varepsilon_{\kappa}\rightarrow0}{\longrightarrow}\partial_{i}\tilde{u}$ $\text{in}\ \hol_{\loc}(\set)$, $\partial_{ij}u^{\varepsilon_{\kappa}}\underset{\varepsilon_{\kappa}\rightarrow0}{\longrightarrow}\partial_{ij}\tilde{u}$,  weakly $ \Lp^{p}_{\loc}(\set)$. Now, define
\begin{equation*}
u(x)\eqdef
\begin{cases}
\tilde{u}(x),&\text{if}\ x\in\set,\\
0,& \text{if}\  x\in\overline{\set}_{\inted}\setminus\set.
\end{cases}
\end{equation*}
It can be verified that $u$ is a continuous function on $\overline{\set}_{\inted}$, which satisfies $u\in\hol^{0,1}(\overline{\set})\cap\sob^{2,p}_{\loc}(\set)$ and 
\begin{equation}\label{conver.1.0}
\begin{split}
&u^{\varepsilon_{\kappa}}\underset{\varepsilon_{\kappa}\rightarrow0}{\longrightarrow} u,\ \text{in}\ \hol(\overline{\set}_{\inted}),\ \partial_{i}u^{\varepsilon_{\kappa}}\underset{\varepsilon_{\kappa}\rightarrow0}{\longrightarrow}\partial_{i} u,\ \text{in}\ \hol_{\loc}(\set),\\ &\partial_{ij}u^{\varepsilon_{\kappa}}\underset{\varepsilon_{\kappa}\rightarrow0}{\longrightarrow}\partial_{ij}u,\ \text{weakly  in}\ \Lp^{p}_{\loc}(\set).
\end{split}
\end{equation}
Since \eqref{p13.0} and \eqref{conver.1.0} hold, we only need to verify  \eqref{conver1.1}. Hence, we can conclude that the limit function $u$ is the solution to the HJB equation \eqref{p1}.

\begin{lema}
Let $\{u^{\varepsilon_{\kappa}}\}_{\kappa\geq0}$ and $u$ be the sub-sequence and the limit function that satisfy \eqref{conver.1.0}. Then,
\begin{equation}\label{conver1.1}
\int_{B_{r}}\varsigma\inted u^{\varepsilon_{\kappa}}\der x\underset{\varepsilon_{\kappa}\rightarrow0}{\longrightarrow}\int_{B_{r}}\varsigma\inted u\,\der x,\ \text{for any $\varsigma\in\hol^{\infty}_{\comp}(\set)$ with $\sop[\varsigma]\subset B_{r}\subset\set$.}
\end{equation} 
  
\end{lema}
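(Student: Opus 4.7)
The plan is to split the L\'evy integral into a near part over $\{|z|<1\}$ and a far part over $\{|z|\geq 1\}$, establish uniform integrable domination of the integrand in $\kappa$, apply the Dominated Convergence Theorem to obtain pointwise convergence $\inted u^{\varepsilon_\kappa}(x) \to \inted u(x)$ on $B_r$, and then conclude the integral convergence via bounded convergence on the bounded set $B_r$.

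First, I observe that by Lemma \ref{gr1} each $u^{\varepsilon_\kappa}$ is Lipschitz on $\overline{\set}$ with constant $C_3$ independent of $\kappa$, and $u^{\varepsilon_\kappa}=0$ on $\overline{\set}_{\inted}\setminus\set$ with $u^{\varepsilon_\kappa}\in\hol^{0}(\overline{\set}_{\inted})$. A short segment argument (splitting any path through $\partial\set$, where $u^{\varepsilon_\kappa}=0$) upgrades this to uniform Lipschitz continuity on $\overline{\set}^{1}$ with the same constant $C_3$. Hence for $x\in B_r\subset\set$ and $|z|<1$, $x+z\in\set^{1}$ and
\begin{equation*}
|[u^{\varepsilon_\kappa}(x+z)-u^{\varepsilon_\kappa}(x)]s(x,z)|\leq C_3|z|,
\end{equation*}
while for $|z|\geq 1$, $|[u^{\varepsilon_\kappa}(x+z)-u^{\varepsilon_\kappa}(x)]s(x,z)|\leq 2C_1$ by Lemma \ref{Lb1}. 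Both bounds are $\nu$-integrable on their respective regions by (H\ref{h3}) and are independent of $\kappa$. The same bounds hold for the limit $u$ since it inherits the same Lipschitz constant and sup-norm bound under \eqref{conver.1.0}.

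Using the uniform convergence $u^{\varepsilon_\kappa}\to u$ on $\overline{\set}_{\inted}$ from \eqref{conver.1.0}, the integrand $[u^{\varepsilon_\kappa}(x+z)-u^{\varepsilon_\kappa}(x)]s(x,z)$ converges, for each fixed $(x,z)$, to $[u(x+z)-u(x)]s(x,z)$. Combined with the $\kappa$-uniform domination above, the Dominated Convergence Theorem applied separately on $\{|z|<1\}$ and $\{|z|\geq 1\}$ yields $\inted u^{\varepsilon_\kappa}(x)\longrightarrow \inted u(x)$ pointwise for every $x\in B_r$.

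Finally, the uniform bound
\begin{equation*}
|\phi(x)\inted u^{\varepsilon_\kappa}(x)|\leq \|\phi\|_{\hol^{0}(B_r)}\Bigl[C_3\int_{\{|z|<1\}}|z|\nu(\der z)+2C_1\int_{\{|z|\geq 1\}}\nu(\der z)\Bigr]
\end{equation*}
is an $L^{1}(B_r)$ function (since $B_r$ is bounded), and the integrands $\phi\,\inted u^{\varepsilon_\kappa}$ converge pointwise to $\phi\,\inted u$. A second application of the Dominated Convergence Theorem gives \eqref{conver1.1}. The main delicate point is the extension of the uniform Lipschitz bound from $\overline{\set}$ to $\overline{\set}^{1}$, which is what makes the near-field dominating function $C_3|z|$ work; everything else is a routine application of DCT on a $\sigma$-finite measure (the L\'evy measure on $\R^{d}_{*}$) together with uniform convergence.
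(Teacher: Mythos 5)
Your proof is correct, but it takes a genuinely different route from the paper's. The paper estimates $\bigl|\int_{B_r}\phi\,\inted[u^{\varepsilon_\kappa}-u]\,\der x\bigr|$ directly by splitting $\R^{d}_{*}$ into the three ranges $\{|z|<r_0\}$, $\{r_0\leq|z|<1\}$, $\{|z|\geq1\}$ with $r_0<\dist(\sop[\phi],\partial B_r)$: on the inner range it uses the fundamental theorem of calculus to rewrite the difference quotient, bringing in $\|\deri^{1}[u^{\varepsilon_\kappa}-u]\|_{\hol^{0}(B_r)}$, and hence it genuinely invokes the locally uniform convergence of gradients from the second line of \eqref{conver.1.0}; the two outer ranges use only the sup-norm convergence of $u^{\varepsilon_\kappa}$ to $u$ and the finiteness of $\nu$ away from the origin (with a harmless factor $\tfrac{1}{r_0}$). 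This gives an explicit quantitative bound tending to zero. You instead eliminate the gradient-convergence input entirely: the $\kappa$-uniform Lipschitz constant $C_3$ from Lemma \ref{gr1}, extended across $\partial\set$ via the zero boundary condition and the convexity of $\set$, together with the sup bound $C_1$ from Lemma \ref{Lb1}, supplies a single $\nu$-integrable dominating function $C_3|z|\uno_{\{|z|<1\}}+2C_1\uno_{\{|z|\geq1\}}$ that is independent of $\kappa$ and of $x\in B_r$; then two applications of the Dominated Convergence Theorem (first in $z$ for fixed $x$, then in $x$) yield \eqref{conver1.1}. Both proofs are sound. The paper's is quantitative and needs no Lipschitz extension beyond $\overline{\set}$; yours is qualitative but uses strictly less from \eqref{conver.1.0} (only the first line), which is a clean economy. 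The step you flag as delicate — extending the Lipschitz bound from $\overline{\set}$ to $\overline{\set}^{1}$ — is indeed the one place that needs care, and it goes through by cases: for $x,y\in\overline{\set}^{1}$ with both in $\overline{\set}$ one uses convexity of $\set$ and the gradient bound; for one on each side of $\partial\set$ the segment (by convexity) crosses $\partial\set$ at a point where $u^{\varepsilon_\kappa}=0$, so $|u^{\varepsilon_\kappa}(x)-u^{\varepsilon_\kappa}(y)|\leq C_3|x-y|$ still holds; for both outside $\set$ one has $u^{\varepsilon_\kappa}(x)=u^{\varepsilon_\kappa}(y)=0$ (extending by zero wherever $u^{\varepsilon_\kappa}$ is not already defined), so the difference vanishes. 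Spelling this out would make your write-up fully rigorous.
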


\begin{proof}
Let $\varsigma\in \hol^{\infty}_{\comp}(\set)$ and $\sop[\varsigma]\subset B_{r}\subset\set$, with $0<r_{0}<\dist(\sop[\varsigma],\partial B_{r})\wedge1$. Then,
\begin{align}\label{conver1.2}
&\biggl|\int_{B_{r}}\varsigma(x)\inted [u^{\varepsilon_{\kappa}}-u](x)\der x\biggr|\notag\\
&\leq\int_{\sop[\varsigma]}|\varsigma(x)|\int_{\{|z|<r_{0}\}}\bigg[\int_{0}^{1}|\deri^{1}(u^{\varepsilon_{\kappa}}-u)(x+tz)|\der t\bigg]|z|s(x,z)\nu(\der z)\der x\notag\\
&\quad+\dfrac{1}{r_{0}}\int_{\sop[\varsigma]}|\varsigma(x)|\int_{\{r_{0}\leq|z|<1\}}|[u^{\varepsilon_{\kappa}}-u](x+z)-[u^{\varepsilon_{\kappa}}-u](x)|\,|z|s(x,z)\nu(\der z)\der x\notag\\
&\quad+\int_{\sop[\varsigma]}|\varsigma(x)|\int_{\{|z|\geq1\}}|[u^{\varepsilon_{\kappa}}-u](x+z)-[u^{\varepsilon_{\kappa}}-u](x)|s(x,z)\nu(\der z)\der x\notag\\
&\leq C_{\nu}||\varsigma||_{\Lp^{1}({B_{r}})}\bigg[||\deri^{1}[u^{\varepsilon_{\kappa}}-u]||_{\hol(B_{r})}+2\biggl[\frac{1}{r_{0}}+1\biggr]||u^{\varepsilon_{\kappa}}-u||_{\hol(\overline{\set}_{\inted})}\bigg].
\end{align}
From \eqref{conver.1.0} and letting $\varepsilon_{\kappa}\rightarrow0$ in \eqref{conver1.2}, it follows that \eqref{conver1.1}. With that, we finish the proof.
\end{proof}

We proceed to show the existence and uniqueness of the solution to the HJB equation \eqref{p1}.

\begin{proof}[Proof of Theorem \ref{princ1.0.1}. Existence]
	Let $p\in(d,\infty)$ be fixed, $\{u^{\varepsilon_{\kappa}}\}_{\kappa\geq0}$ and $u$ be the sub-sequence and the limit function, respectively, that satisfy \eqref{conver.1.0} and \eqref{conver1.1}. Recall that $u\in\hol^{0,1}(\overline{\set})\cap\sob^{2,p}_{\loc}(\set)$ and $u^{\varepsilon_{\kappa}}\in\hol^{3,\alpha'}(\overline{\set})$ is the unique solution to the NPIDD problem \eqref{p13.0} when $\varepsilon=\varepsilon_{\kappa}$. Then,  \eqref{p13.0}, \eqref{conver.1.0} and \eqref{conver1.1} imply $\int_{B_{ r}}\varsigma\Gamma u\der x\leq \int_{B_{ r}}\varsigma h\der x$ for each non-negative function $\varsigma$ in $\hol^{\infty}_{\comp}(B_{r})$, where $\sop[\varsigma]\subset B_{r}\subset \set $. From here, it follows that
	$\Gamma u\leq h$ a.e. in $\set$.  { {Meanwhile}, since $\psi_{\varepsilon}(|\deri^{1} u^{\varepsilon_{\kappa}}|^{2}- g^{2})$ is locally bounded (uniformly in $\varepsilon$); see Lemma \ref{cotaphi},  it follows that for each $x \in\set$, there exists an $\varepsilon' $ such that for all $\varepsilon_{\kappa}\leq\varepsilon' $,   $|\deri^{1} u^{\varepsilon_{\kappa}}(x)|\leq g(x)$. From here and that $|\deri^{1}[u^{\varepsilon_{\kappa}}-u](x)|\underset{\varepsilon_{\kappa}\rightarrow0}{\longrightarrow}0$, it yields    $|\deri^{1} u|\leq g$ in $\set$.} Suppose that $|\deri^{1} u(x^{*})|<g(x^{*})$, for some $x^{*}\in \set $. Then, by the continuity of  $\deri^{1} u$, there exists a small open ball $B_{r}\subset \set $ such that $x^{*}\in B_{r}$ and    
	$|\deri^{1} u|<g$ in $ B_{r}$. Since $||\deri^{1}[ u^{\varepsilon_{\kappa}}-u]||_{\hol(\overline{B_{r}})}\underset{\varepsilon_{\kappa}\rightarrow0}{\longrightarrow}0$, we obtain that there exists $\varepsilon_{\kappa_{0}}$ such that for each $\varepsilon_{\kappa}\leq \varepsilon_{\kappa_{0}}$,  $|\deri^{1} u^{\varepsilon_{\kappa}}|<g$ in $B_{r}$. Then, from \eqref{p13.0} and the definition of $\psi_{\varepsilon}$, it follows that for each $\varepsilon_{\kappa}\leq \varepsilon_{\kappa_{0}}$,  $\Gamma u^{\varepsilon_{\kappa}}= h$ in $B_{r}$. Then, $\int_{B_{  r}}[\Gamma u^{\varepsilon_{\kappa}}]\varsigma\der x= \int_{B_{ r}}h\varsigma\der x$, for any non-negative function $\varsigma$ in $\hol^{\infty}_{\comp}(B_{ r})$, with $\sop[\varsigma]\subset B_{r}\subset \set $. From here and  using \eqref{conver.1.0}--\eqref{conver1.1}, we get  $\int_{B_{ r}}\varsigma \Gamma u \der x= \int_{B_{ r}}\varsigma h\der x$. Therefore,
	$\Gamma u= h$, a.e. in $B_{  r}$. 
	By the arguments seen previously, we conclude that $u$ is a solution to the HJB equation  \eqref{p1}  a.e. in $\set $.
\end{proof}

\begin{proof}[Proof of Theorem \ref{princ1.0.1}. Uniqueness]
Let  $p\in(d,\infty)$ be fixed. Suppose there exist $u_{1},u_{2}\in\hol^{0,1}(\overline{\set })\cap\sob^{2,p}_{\loc}(\set )$, two solutions to the HJB equation  \eqref{p1}. Let $x^{*}\in\overline{\set }$ be the point where $u_{1}-u_{2}$ attains its maximum. If $x^{*}\in\partial \set $, it is easy to see 
$[u_{1}-u_{2}](x)\leq[u_{1}-u_{2}](x^{*})=0$ for all $x\in\overline{\set}$. Let us assume that $x^{*}\in \set$. In this case one wishes to prove that $[u_{1}-u_{2}](x^{*})\leq0$, which we demonstrate  by contradiction. Suppose $[u_{1}-u_{2}](x^{*})>0$ and take $f\eqdef[1-\rho]u_{1}-u_{2}$ on $\overline{\set}$  such that $f(x^{*})>0$, for some $\rho>0$ small enough. Using $f=0$ on $\overline{\set}_{\inted}\setminus\set   $, it follows that $f(x^{*}_{1})>0$, where  $x^{*}_{1}\in \set $ is the point  where  $f$ attains its maximum. Besides, we have
$\deri^{1}f(x^{*}_{1})=[1-\rho]\deri^{1}u_{1}(x^{*}_{1})-\deri^{1}u_{2}(x^{*}_{1})=0$ and $f(x^{*}_{1}+z)\leq f(x^{*}_{1})$, for $z\in\R^{d}_{*}$ with $x^{*}_{1}+z\in \overline{\set}_{\inted}$. Then, $\inted f(x^{*}_{1})\leq 0$. Since $\deri^{1}f(x^{*}_{1})=0$, $|\deri^{1}u_{1}(x^{*}_{1})|\leq g(x^{*}_{1})$ and $1-\rho<1$, we get  $|\deri^{1}u_{2}(x^{*}_{1})|=[1-\rho]|\deri^{1}u_{1}(x^{*}_{1})|<g(x^{*}_{1})$. This implies that  there exists $\mathcal{V}_{x^{*}_{1}}$ a neighborhood of $x_{1}^{*}$ such that  $\Gamma u_{2}=h$ and $\Gamma u_{1}\leq h$ in $\mathcal{V}_{x_{1}^{*}}$. Then, $\Gamma f\leq-\rho h$  in $\mathcal{V}_{x_{1}^{*}}$, and hence $\tr[a\deri^{2}f]\geq \langle b,\deri^{1}f\rangle+cf-\inted f+\rho h,\ \text{in}\ \mathcal{V}_{x_{1}^{*}}$.  {By} using Bony's maximum principle (see \cite{lions}), it yields 
$$0\geq\limess_{x\rightarrow x_{1}^{*}}\tr[a(x)\deri^{2}f(x)]\geq c(x^{*}_{1})f(x_{1}^{*})-\inted f(x_{1}^{*})+\rho h(x_{1}^{*}),$$
which is a contradiction since $c(x_{1}^{*})f(x_{1}^{*})>0$, $-\inted f(x_{1}^{*})\geq0$ and $\rho h(x^{*}_{1})\geq0$. The application of Bony's maximum principle is permitted here because $u_{1},u_{2}\in \sob^{2,p}_{\loc}(\set )$ and $d< p<\infty$. Therefore, it yields $[u_{1}-u_{2}](x)\leq[u_{1}-u_{2}](x^{*})\leq0$ for all $x\in\overline{\set}$. Taking $u_{2}-u_{1}$ and proceeding in the same way as before, it follows that $u_{2}-u_{1}\leq 0$  in $ \set $, and hence  we conclude that the solution $u$ to the HJB equation \eqref{p1} is unique.
\end{proof}

\section{Penalized control problem and proof of Proposition \ref{veri1}}\label{HJBPro} 

This section is devoted to verifying that the value function $V$ and $u$ agree on $\set$, which are the value function defined in \eqref{vf1} and the solution to the HJB equation \eqref{esd5}, respectively. For this purpose, we   introduce a class of penalized controls that belong to $\mathcal{U}$. Recall that $\mathcal{U}$ is the set of admissible controls $(n,\zeta)$ that satisfy \eqref{cont.1}. Take the penalized controls set $\mathcal{U}^{\varepsilon}$ by
\begin{equation*}
\mathcal{U}^{\varepsilon}\eqdef\{(n,\zeta)\in\mathcal{U}: \zeta_{\mathpzc{t}}\ \text{is absolutely continuous,}\ 0\leq\dot{\zeta}_{\mathpzc{t}}\leq 2C_{3}/\varepsilon\},\ \text{with}\ \varepsilon\in(0,1)\ \text{fixed}, 
\end{equation*} 
where $C_{3}$ is a positive constant as in Lemma \ref{gr1}, which is independent of $\varepsilon$. Then, for each $(n,\zeta)\in\mathcal{U}^{\varepsilon}$ and $\tilde{x}\in\set$,  the process $X^{n,\zeta}=\{X^{n,\zeta}_{\mathpzc{t}}:\mathpzc{t}\geq0\}$ evolves in the following way
\begin{equation}\label{esd3.1.0.0}
 X^{n,\zeta}_{\mathpzc{t}}=\tilde{x}-\int_{0}^{\mathpzc{t}}[ \tilde{b} (X^{n,\zeta}_{\mathpzc{s}})+n_{\mathpzc{s}}\dot{\zeta}_{\mathpzc{s}}]\der \mathpzc{s}+\int_{0}^{\mathpzc{t}}\sigma(X^{n,\zeta}_{\mathpzc{s}})\der  W_{\mathpzc{s}}+\int_{0}^{\mathpzc{t}}\der  J_{\mathpzc{s}},\ \text{with}\ \mathpzc{t}\geq 0, 
\end{equation}
where $W$ is a $d$-dimensional SBM as in Subsection \ref{Prob.1} and $ J$ is the jump process given by \eqref{esd2.1}.  Notice that $\Delta X^{n,\zeta}=\Delta  J$.  Recall that here $ \tilde{b}:\R^{d}\longrightarrow\R^{d}$, $\sigma:\R^{d}\longrightarrow\R^{d\times d}$,  and $s:\overline{\set}\times\R^{d}\longrightarrow[0,1]$ satisfy (A1)--(A5). Then, the SDE \eqref{esd3.1.0.0} has a unique c\`adl\`ag adapted solution $X^{n,\zeta}$; see \cite{dade}. The penalized cost related to this class of controls is defined by 
\begin{equation*}
\mathcal{V}_{n,\zeta}(\tilde{x})=\E_{\tilde{x}}\biggl[\int_{0}^{\tau^{n,\zeta}}\expo^{-q\mathpzc{s}}[  h (X^{n,\zeta}_{\mathpzc{s}})+l_{\varepsilon}(X^{n,\zeta}_{\mathpzc{s}},\dot{\zeta}_{\mathpzc{s}}n_{\mathpzc{s}})]\der \mathpzc{s}\biggr],\ \text{for }\ (n,\zeta)\in\mathcal{U}^{\varepsilon},
\end{equation*}
where $\tau^{n,\zeta}\eqdef\inf\{ \mathpzc{t}>0:X_{\mathpzc{t}}^{n,\zeta}\notin\set\}$,  $ h :\R^{d}\longrightarrow\R$ is continuous and non-negative, and $l_{\varepsilon}(x,y)\eqdef\sup_{\gamma\in\R^{d}}\{\langle \gamma,y \rangle-H_{\varepsilon}(x,\gamma)\}$ is the  Legendre transform of $H_{\varepsilon}(x,\gamma)\eqdef\psi_{\varepsilon}(|\gamma|^{2}-g(x)^{2})$, where $ g:\R^{d}\longrightarrow\R$ is continuous and non-negative. Notice that, for each $x\in\R^{d}$ fixed,  $H_{\varepsilon}(x,\gamma)$ is a $\hol^{2}$ and convex function with respect to the variable $\gamma\in\R^{d}$, since $\psi_{\varepsilon}\in\hol^{\infty}(\R)$ is convex function; see \eqref{p12.1}. The value function for this problem is given by
\begin{equation}\label{Vfp1}
V^{\varepsilon}(\tilde{x})\eqdef\inf_{(n,\zeta)\in\mathcal{U}^{\varepsilon}}\mathcal{V}_{n,\zeta}(\tilde{x}).
\end{equation}
A heuristic derivation from dynamic programming principle (see \cite[Ch. VIII]{flem}) shows that the NPIDD problem corresponding to the value function $V^{\varepsilon}$ is of the form
\begin{equation}\label{p13.0.1.0}
[q-\Gamma_{1}]  u^{\varepsilon}+ \sup_{y\in\R^{d}}\{\langle \deri^{1}u^{\varepsilon},y\rangle-l_{\varepsilon}(\cdot,y)\}= h ,\ \text{in}\ \set,\quad\text{s.t.}\ 
   u^{\varepsilon}=0,\ \text{on}\ \overline{\set}_{\inted}\setminus\set ,
\end{equation}
where $\Gamma_{1}$ is as in \eqref{esd3.0}.  Since $H_{\varepsilon}(x,\gamma)$ is $\hol^{2}$ with respect to the variable $\gamma$, it follows that $H_{\varepsilon}(x,\gamma)=\sup_{y\in\R^{d}}\{\langle \gamma,y\rangle-l_{\varepsilon}(x,y)\}$. Then, the NPIDD problem \eqref{p13.0.1.0} can be written as
\begin{equation}\label{NPIDD.1}
[q-\Gamma_{1}]  u^{\varepsilon}+ \psi_{\varepsilon}(|\deri^{1} u^{\varepsilon}|^{2}- g   ^{2})=  h  ,\ \text{in}\ \set,\quad\text{s.t.}\ 
   u^{\varepsilon}=0,\ \text{on}\ \overline{\set}_{\inted}\setminus\set .
\end{equation}
Assuming from now on that  $a_{ij}=\frac{1}{2}(\sigma\sigma^{\trans})_{ij}$, $ b _{i}$, $ h $, $ g   $, $  s  $ satisfy (A1)--(A5), an immediate consequence of Proposition \ref{princ1.0} is the following corollary.

\begin{coro}\label{NPIDD.1.2}
The NPIDD problem \eqref{NPIDD.1} has a unique  non-negative solution $u^{\varepsilon}$ in $\hol^{3,\alpha'}(\overline{\set})$, for each $\varepsilon\in(0,1)$. 
\end{coro}

\begin{rem}
 Without loss of generality  we assume that $\psi_{\varepsilon}$ is non-decreasing as $\varepsilon\downarrow0$; see \cite{zhu}.
\end{rem}

\begin{coro}\label{nc1}
 Let $u^{\varepsilon}$ be the unique non-negative solution to the NPIDD problem, for each $\varepsilon\in(0,1)$. Then, $u^{\varepsilon}$ is non-increasing as $\varepsilon\downarrow 0$.
\end{coro}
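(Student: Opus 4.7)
The plan is to apply the comparison principle of Lemma \ref{supsuper} — noting that \eqref{NIDD.1} is a special case of \eqref{p13.0} with coefficients $a = \tfrac{1}{2}\sigma\sigma^{\trans}$, $b = \widetilde{b}_{1}$, $c = q$, $s \equiv 1$, $h = \widetilde{h}$, $g = \widetilde{g}$ (all of which satisfy (H\ref{h1})--(H\ref{h3}) by the standing assumption of this section) — together with the monotonicity of the penalization in $\varepsilon$ afforded by Remark \ref{subs.1}.

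Fix $0 < \varepsilon_{1} < \varepsilon_{2} < 1$; the goal is to show $v^{\varepsilon_{1}} \leq v^{\varepsilon_{2}}$ on $\overline{\set}$. By Remark \ref{subs.1}, $\psi_{\varepsilon_{1}}(r) \geq \psi_{\varepsilon_{2}}(r)$ for every $r \in \R$. Evaluating this inequality at $r = |\deri^{1} v^{\varepsilon_{2}}|^{2} - \widetilde{g}^{2}$ and using that $v^{\varepsilon_{2}}$ solves \eqref{NIDD.1} with index $\varepsilon_{2}$ gives
\begin{equation*}
[q-\Gamma_{1}] v^{\varepsilon_{2}} + \psi_{\varepsilon_{1}}(|\deri^{1} v^{\varepsilon_{2}}|^{2} - \widetilde{g}^{2}) \geq [q-\Gamma_{1}] v^{\varepsilon_{2}} + \psi_{\varepsilon_{2}}(|\deri^{1} v^{\varepsilon_{2}}|^{2} - \widetilde{g}^{2}) = \widetilde{h}\quad \text{in } \set,
\end{equation*}
with $v^{\varepsilon_{2}} = 0$ on $\set^{\comp}$. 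Hence, in the sense of Definition \ref{princmax1}, $v^{\varepsilon_{2}}$ is a sub-solution to the NIDD problem with penalization $\psi_{\varepsilon_{1}}$, while $v^{\varepsilon_{1}}$ — being the exact solution of this same problem — is in particular a super-solution of it.

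The regularity required by Lemma \ref{supsuper} is delivered by Corollary \ref{NIDD.1.2}, which places both $v^{\varepsilon_{1}}$ and $v^{\varepsilon_{2}}$ in $\hol^{3,1}(\overline{\set}) \subset \hol^{2}(\overline{\set}) \cap \hol^{0}(\overline{\set}_{\inted})$. Applying that lemma therefore yields $v^{\varepsilon_{1}} - v^{\varepsilon_{2}} \leq 0$ on $\overline{\set}$, which is precisely the asserted monotonicity. There is no genuine obstacle: once the displayed inequality is written down, the conclusion is an immediate consequence of the comparison principle for \eqref{p13.0}, and the only substantive input beyond the existence theory already established is the pointwise monotonicity of $\psi_{\varepsilon}$ from Remark \ref{subs.1}.
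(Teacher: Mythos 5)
Your proof is correct and follows essentially the same route as the paper's: both rely on Remark \ref{subs.1} to compare the penalization functions pointwise, plug this into the NIDD equation to exhibit one $v^{\varepsilon}$ as a super/sub-solution of the other's problem, and then invoke the comparison principle of Lemma \ref{supsuper}. The only difference is cosmetic --- the paper labels its indices so that the smaller-$\varepsilon$ solution becomes a super-solution of the larger-$\varepsilon$ problem, whereas you run the labeling the other way and show the larger-$\varepsilon$ solution is a sub-solution of the smaller-$\varepsilon$ problem --- but both yield the same monotonicity in one line.
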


\begin{proof}
Let $u^{\varepsilon_{1}}, u^{\varepsilon_{2}}$ be the unique solutions to the NPIDD problem \eqref{p13.0} when $\varepsilon=\varepsilon_{1},\varepsilon_{2}$, respectively, with $\varepsilon_{2}\leq\varepsilon_{1}$. Since $\psi_{\varepsilon_{2}}\geq\psi_{\varepsilon_{1}}$ and $u^{\varepsilon_{2}}$ is the unique solution to \eqref{p13.0} when $\varepsilon=\varepsilon_{2}$, we see that
\begin{equation*}
[q-\Gamma_{1}]u^{\varepsilon_{2}}+\psi_{\varepsilon_{1}}(|\deri^{1}u^{\varepsilon_{2}}|^{2}- g   ^{2})\leq h,\ \text{in}\ \set,\quad\text{s.t.}\ 
u^{\varepsilon_{2}}=0,\ \text{on}\ \overline{\set}_{\inted}\setminus\set .
\end{equation*}
From Lemma \ref{supsuper}, it follows that $u^{\varepsilon_{2}}\leq u^{\varepsilon_{1}}$ on $\overline{\set}$. Therefore, $u^{\varepsilon}$ is non-increasing as $\varepsilon\downarrow 0$.
\end{proof}
Now we construct our optimal stochastic control candidate $(n^{\varepsilon,*},\zeta^{\varepsilon,*})$ for the problem \eqref{Vfp1}.
Consider the following SDE
\begin{align}\label{esd3.1.0}
X^{\varepsilon,*}_{\mathpzc{t}\wedge\tau^{*}_{\varepsilon}}&=\tilde{x}+\int_{0}^{\mathpzc{t}\wedge\tau^{*}_{\varepsilon}}\sigma(X^{\varepsilon,*}_{\mathpzc{s}})\der  W_{\mathpzc{s}}+\int_{0}^{\mathpzc{t}\wedge\tau^{*}_{\varepsilon}}\der  J_{\mathpzc{s}}\notag\\
&\quad-\int_{0}^{\mathpzc{t}\wedge\tau^{*}_{\varepsilon}}[ \tilde{b} (X^{\varepsilon,*}_{\mathpzc{s}})+2\psi'_{\varepsilon}(|\deri^{1} u^{\varepsilon}(X^{\varepsilon,*}_{\mathpzc{s}})|^{2}- g   (X^{\varepsilon,*}_{\mathpzc{s}})^{2})\deri^{1}u^{\varepsilon}(X^{\varepsilon,*}_{\mathpzc{s}})]\der \mathpzc{s},
\end{align}
with $\tilde{x}\in \set$, $\mathpzc{t}\geq0$ and $\tau^{*}_{\varepsilon}\eqdef\inf\{\mathpzc{t}>0:X^{\varepsilon,*}_{\mathpzc{t}}\notin\set\}$. Observe that $\psi'_{\varepsilon}(|\deri^{1} u^{\varepsilon}|^{2}-g^{2})\deri^{1}u^{\varepsilon}$ satisfies \eqref{jum.1}, since it is a bounded Lipschitz continuous function on $\overline{\set}$. Then,  the SDE \eqref{esd3.1.0} has a unique c\`adl\`ag adapted solution $X^{\varepsilon,*}$; see \cite{KK2014}. Defining the  control process $(n^{\varepsilon,*},\zeta^{\varepsilon,*})$ by   
\begin{equation}\label{opt.1}
n^{\varepsilon,*}_{\mathpzc{t}}=
\begin{cases}
\frac{\deri^{1}u^{\varepsilon}(X^{\varepsilon,*}_{\mathpzc{t}})}{|\deri^{1}u^{\varepsilon}(X^{\varepsilon,*}_{\mathpzc{t}})|}, &\text{if}\ |\deri^{1}u^{\varepsilon}(X^{\varepsilon,*}_{\mathpzc{t}})|\neq0,\\
\gamma_{0},& \text{if}\ |\deri^{1}u^{\varepsilon}(X^{\varepsilon,*}_{\mathpzc{t}})|=0, 
\end{cases}
\end{equation}
with $\gamma_{0}\in\R^{d}$ a unit vector fixed, and $\zeta^{\varepsilon,*}_{\mathpzc{t}}=\int_{0}^{\mathpzc{t}}\dot{\zeta}^{\varepsilon,*}_{\mathpzc{s}}\der \mathpzc{s}$, with
\begin{align}\label{opt.2}
\dot{\zeta}^{\varepsilon,*}_{\mathpzc{s}}= 2\psi'_{\varepsilon}(|\deri^{1} u^{\varepsilon}(X^{\varepsilon,*}_{\mathpzc{s}})|^{2}- g   (X^{\varepsilon,*}_{\mathpzc{s}})^{2})|\deri^{1} u^{\varepsilon}(X^{\varepsilon,*}_{\mathpzc{s}})|,
\end{align}
we see that for  $\mathpzc{t}\in[0,\tau^{*}_{\varepsilon})$, $n^{\varepsilon,*}_{\mathpzc{t}}\dot{\zeta}^{\varepsilon,*}_{\mathpzc{t}}=2\psi'_{\varepsilon}(|\deri^{1} u^{\varepsilon}(X^{\varepsilon,*}_{\mathpzc{t}})|^{2}- g   (X^{\varepsilon,*}_{\mathpzc{t}})^{2})\deri^{1}u^{\varepsilon}(X^{\varepsilon,*}_{\mathpzc{t}})$, $\Delta\zeta^{\varepsilon,*}_{\mathpzc{t}}=0$, $|n^{\varepsilon,*}_{\mathpzc{t}}|=1$ and, by \eqref{p12.1} and Lemma \ref{gr1},  $\dot{\zeta}^{\varepsilon,*}_{\mathpzc{t}}\leq\frac{2C_{3}}{\varepsilon}$. On the event $\{\tau^{*}_{\varepsilon}=\infty\}$, the control process $(n^{\varepsilon,*},\zeta^{\varepsilon,*})$ belongs to $\mathcal{U}^{\varepsilon}$. On the event $\{\tau^{*}_{\varepsilon}<\infty \}$, since $u^{\varepsilon}\in\hol^{3,\alpha'}(\overline{\set})$, $u^{\varepsilon}=0$ on $\overline{\set}_{\inted}\setminus\set $, and $X^{\varepsilon,*}_{\tau^{*}_{\varepsilon}}\in\overline{\set}_{\inted}\setminus\set $,  we take $\dot{\zeta}^{\varepsilon,*}_{\mathpzc{t}}\equiv 0$ and  $n^{\varepsilon,*}_{\mathpzc{t}}\eqdef\gamma_{0}$, for $\mathpzc{t}>\tau^{*}_{\varepsilon}$. In this way, we have that $(n^{\varepsilon,*},\zeta^{\varepsilon,*})\in\mathcal{U}^{\varepsilon}$. 

\begin{lema}[Verification Lemma for penalized control problem]\label{convexu1.0}
Let  $\varepsilon\in(0,1)$ be fixed. Then,
\begin{enumerate}
\item[(i)] For each $(n,\zeta)\in\mathcal{U}^{\varepsilon}$, $u^{\varepsilon}\leq\mathcal{V}_{n,\zeta}$ on $\overline{\set}$. 

\item[(ii)] Let $X^{\varepsilon,*}, (n^{\varepsilon,*},\zeta^{\varepsilon,*})$ be the solution process to the SDE \eqref{esd3.1.0} and the control process given by \eqref{opt.1}--\eqref{opt.2}, respectively. Then, $u^{\varepsilon}=\mathcal{V}_{n^{\varepsilon,*},\zeta^{\varepsilon,*}}=V^{\varepsilon}$ on $\overline{\set}$.
\end{enumerate}
\end{lema}
From now on, for simplicity of notation, we replace $X^{n,\zeta}$  by $X$ in the proofs of the results.
\begin{proof}[Proof of Lemma \ref{convexu1.0}]
Let $\varepsilon\in(0,1)$  be fixed, $X=\{X_{\mathpzc{t}}:\mathpzc{t}\geq0\}$ be the process which evolves  as in \eqref{esd3.1.0.0}, with $(n,\zeta)\in\mathcal{U}^{\varepsilon}$ and $\tilde{x}\in \set$ an initial state.  Notice that $u^{\varepsilon}$ is in $\hol^{3}(\overline{\set})$. Then, integration by parts and It\^o's formula imply (see \cite[Cor. 2 and Thm. 33, pp. 68 and 81, respectively]{pro}) 
\begin{align}\label{expand.1}
u^{\varepsilon}(\tilde{x})&=
\expo^{-q[\mathpzc{t}\wedge\tau]}u^{\varepsilon}(X_{\mathpzc{t}\wedge\tau})-\int_{0}^{\mathpzc{t}\wedge\tau}\expo^{-q\mathpzc{s}}\langle\deri^{1}u^{\varepsilon}(X_{\mathpzc{s}}),\der  J_{\mathpzc{s}}\rangle\notag\\
&\quad+\int_{0}^{\mathpzc{t}\wedge\tau}\expo^{-q\mathpzc{s}}[qu^{\varepsilon}(X_{\mathpzc{s}})+\langle \deri^{1}u^{\varepsilon}(X_{\mathpzc{s}}),\tilde{b}(X_{\mathpzc{s}})\rangle-\tr[a(X_{\mathpzc{s}})\deri^{2}u^{\varepsilon}(X_{\mathpzc{s}})]]\der \mathpzc{s}\notag\\
&\quad-\int_{0}^{\mathpzc{t}\wedge\tau}\expo^{-q\mathpzc{s}}\langle\deri^{1} u^{\varepsilon}(X_{\mathpzc{s}}),\sigma(X_{\mathpzc{s}})\der  W_{\mathpzc{s}}\rangle+\int_{0}^{\mathpzc{t}\wedge\tau}\expo^{-q\mathpzc{s}}\langle\deri^{1} u^{\varepsilon}(X_{\mathpzc{s}-}),n_{\mathpzc{s}}\rangle\der\zeta_{\mathpzc{s}}\notag\\
&\quad-\sum_{0\leq\mathpzc{s}\leq\mathpzc{t}\wedge\tau}\expo^{-q\mathpzc{s}}[u^{\varepsilon}(X_{\mathpzc{s}-}+\Delta X_{\mathpzc{s}})-u^{\varepsilon}(X_{\mathpzc{s}-})-\langle\deri^{1}u^{\varepsilon}(X_{\mathpzc{s}-}),\Delta X_{\mathpzc{s}}\rangle],
\end{align}
where $a_{ij}=\frac{1}{2}(\sigma\sigma^{\trans})_{ij}$ and $\tau=\inf\{ \mathpzc{t}>0:X_{\mathpzc{t}}\notin\set\}$. 
 {Meanwhile}, since $\Delta\zeta\equiv0$, it can be verified 
\begin{align}\label{expand.1.2}
&-\sum_{0\leq\mathpzc{s}\leq\mathpzc{t}\wedge\tau}\expo^{-q\mathpzc{s}}[u^{\varepsilon}(X_{\mathpzc{s}-}+\Delta X_{\mathpzc{s}})-u^{\varepsilon}(X_{\mathpzc{s}-})-\langle\deri^{1}u^{\varepsilon}(X_{\mathpzc{s}-}),\Delta X_{\mathpzc{s}}\rangle]\notag\\
&\quad=-\int_{0}^{\mathpzc{t}\wedge\tau}\int_{\mathcal{S}}\expo^{-q\mathpzc{s}}[u^{\varepsilon}(X_{\mathpzc{s}-}+z)-u^{\varepsilon}(X_{\mathpzc{s}-})-\langle\deri^{1}u^{\varepsilon}(X_{\mathpzc{s}-}),z\rangle]\uno_{\{\rho\in[0,s(X_{\mathpzc{s}},z)]\}}N(\der\rho,\der z,\der\mathpzc{s})\notag\\
&\quad=-\widetilde{\mathcal{M}}^{\varepsilon}_{\mathpzc{t}\wedge\tau}+\int_{0}^{\mathpzc{t}\wedge\tau}\expo^{-q\mathpzc{s}}\langle\deri^{1}u^{\varepsilon}(X_{\mathpzc{s}-}),\der  J_{\mathpzc{s}}\rangle-\int_{0}^{\mathpzc{t}\wedge\tau}\expo^{-q\mathpzc{s}}\inte u^{\varepsilon}(X_{\mathpzc{s}})\der\mathpzc{s}\notag\\
&\qquad+\int_{0}^{\mathpzc{t}\wedge\tau}\int_{\{|z|\in(0,1)\}}\expo^{-q\mathpzc{s}}\langle\deri^{1}u^{\varepsilon}(X_{\mathpzc{s}}),z\rangle s(X_{\mathpzc{s}},z)\nu(\der z)\der\mathpzc{s},
\end{align}
where
\begin{equation}\label{M.1}
\widetilde{\mathcal{M}}^{\varepsilon}_{\mathpzc{t}\wedge\tau}\eqdef\int_{0}^{\mathpzc{t}\wedge\tau}\int_{\mathcal{S}}\expo^{-q\mathpzc{s}}[u^{\varepsilon}(X_{\mathpzc{s}-}+z)-u^{\varepsilon}(X_{\mathpzc{s}-})] \uno_{\{\rho\in[0,s (X_{\mathpzc{s}-},z)]  \}}\widetilde{N}(\der \rho,\der z,\der \mathpzc{s}).
\end{equation}
Recall that $\mathcal{S}=[0,1]\times\R^{d}$ and  $\widetilde{N}(\der\rho,\der z,\der \mathpzc{t})= N(\der\rho,\der z,\der \mathpzc{t})-\eta(\der \rho,\der z)\der \mathpzc{t}$ is the  compensated Poisson random  measure with intensity $\eta(\der\rho,\der z)\der\mathpzc{t}=\der\rho\nu(\der z)\der\mathpzc{t}$. Then, from \eqref{expand.1}, \eqref{expand.1.2} and noting that $\der\zeta_{\mathpzc{s}}=\dot{\zeta_{\mathpzc{s}}}\der\mathpzc{s}$ and 
\begin{align*}
\int_{0}^{\mathpzc{t}\wedge\tau}\expo^{-q\mathpzc{s}}\langle\deri^{1}u^{\varepsilon}(X_{\mathpzc{s}}), b(X_{\mathpzc{s}})\rangle\der\mathpzc{s} &= \int_{0}^{\mathpzc{t}\wedge\tau}\expo^{-q\mathpzc{s}}\bigg[\langle \deri^{1}u^{\varepsilon}(X_{\mathpzc{s}}),\tilde{b}(X_{\mathpzc{s}})\rangle\notag\\ &\quad+\int_{\{|z|\in(0,1)\}}\langle\deri^{1}u^{\varepsilon}(X_{\mathpzc{s}}),z\rangle s(X_{\mathpzc{s}},z)\nu(\der z)\bigg]\der\mathpzc{s},
\end{align*}
it follows that
\begin{align}\label{expand1.0.0}
u^{\varepsilon}(\tilde{x})&= 
\expo^{-q[\mathpzc{t}\wedge\tau]} u^{\varepsilon}(X_{\mathpzc{t}\wedge\tau})-\mathcal{M}^{\varepsilon}_{\mathpzc{t}\wedge\tau}+\int_{0}^{\mathpzc{t}\wedge\tau}\expo^{-q\mathpzc{s}} [[q-\Gamma_{1}] u^{\varepsilon}(X_{\mathpzc{s}})+\langle\deri^{1} u^{\varepsilon}(X_{\mathpzc{s}}),\dot{\zeta}_{\mathpzc{s}}n_{\mathpzc{s}}\rangle]\der \mathpzc{s},
\end{align}
with $\Gamma_{1}$ as in \eqref{esd3.0},   
\begin{equation}\label{M.2}
\mathcal{M}^{\varepsilon}_{\mathpzc{t}\wedge\tau}\eqdef\widetilde{\mathcal{M}}^{\varepsilon}_{\mathpzc{t}\wedge\tau}+\overline{\mathcal{M}}^{\varepsilon}_{\mathpzc{t}\wedge\tau}\quad\text{and}\quad
\overline{\mathcal{M}}^{\varepsilon}_{\mathpzc{t}\wedge\tau}\eqdef\int_{0}^{\mathpzc{t}\wedge\tau}\expo^{-q\mathpzc{s}}\langle\deri^{1} u^{\varepsilon}(X_{\mathpzc{s}}),\sigma(X_{\mathpzc{s}})\der  W_{\mathpzc{s}}\rangle.
\end{equation}
Observe that $\E_{\tilde{x}}[\int_{0}^{t}\int_{\mathcal{S}}\tilde{f}(X_{\mathpzc{s}-},z)N(\der\rho,\der z,\der\mathpzc{s})]=\int_{0}^{t}\int_{\mathcal{S}}\E_{\tilde{x}}[\tilde{f}(X_{\mathpzc{s}-},z)]\eta(\der \rho,\der z)\der\mathpzc{s}$. From here and taking $\tilde{f}(X_{\mathpzc{s}-},z)=\expo^{-q\mathpzc{s}}[u^{\varepsilon}(X_{\mathpzc{s}-}+z)-u^{\varepsilon}(X_{\mathpzc{s}-})] \uno_{\{\rho\in[0, s (X_{\mathpzc{s}-},z)]\}}$, it can be verified that $\widetilde{\mathcal{M}}^{\varepsilon}=\{\widetilde{ \mathcal{M}}^{\varepsilon}_{\mathpzc{t}\wedge\tau}:\mathpzc{t}\geq0\}$ is a martingale. Moreover, $\widetilde{\mathcal{M}}^{\varepsilon}$ is square integrable, since
\begin{align*}
\E_{\tilde{x}}[[\widetilde{\mathcal{M}}^{\varepsilon}_{\mathpzc{t}\wedge\tau}]^{2}]&=\E_{\tilde{x}}\bigg[\int_{0}^{\mathpzc{t}\wedge\tau}\int_{\mathcal{S}}[\tilde{f}(X_{\mathpzc{s}-},z)]^{2}\eta(\der\rho,\der z)\der\mathpzc{s}\bigg]\notag\\
&=\E_{\tilde{x}}\bigg[\int_{0}^{\mathpzc{t}\wedge\tau}\int_{\R^{d}_{*}}\expo^{-2q\mathpzc{s}}[u^{\varepsilon}(X_{\mathpzc{s}-}+z)-u^{\varepsilon}(X_{\mathpzc{s}-})]^{2}s(X_{\mathpzc{s}-},z)\nu(\der z)\der\mathpzc{s}\bigg]\notag\\
&\leq\mathpzc{t}\bigg[C_{3}^{2}\int_{\{|z|\in(0,1)\}}|z|^{2}\nu(\der z)+4C_{1}^{2}\int_{\{|z|\geq1\}}\nu(\der z)\bigg]<\infty,\ \text{for}\ \mathpzc{t}\geq0\ \text{fixed}, 
\end{align*}
where $C_{1}$ and $C_{3}$ are as in Lemmas \ref{Lb1} and \ref{gr1}, respectively.  {Meanwhile}, It\^o's  isometry and the continuity of $\sigma$ and $u^{\varepsilon}$ on $\overline{\set}$, imply that 
\begin{align*}
\E_{\tilde{x}}[[\overline{\mathcal{M}}^{\varepsilon}_{\mathpzc{t}\wedge\tau}]^{2}]&\leq2\sum_{ij}\E_{\tilde{x}}\bigg[\int_{0}^{\mathpzc{t}\wedge\tau}[\expo^{-q\mathpzc{s}}\partial_{i}u^{\varepsilon}(X_{\mathpzc{s}-})\sigma_{ij}(X_{\mathpzc{s}-})]^{2}\der \mathpzc{s}\bigg]\leq 2\mathpzc{t}C_{3}^{2}\sum_{ij}\sup_{x\in\overline{\set}}\{\sigma_{ij}(x)^{2}\}<\infty,\ 
\end{align*}
for $\mathpzc{t}\geq0$ fixed. This implies that $\overline{\mathcal{M}}^{\varepsilon}=\{\overline{ \mathcal{M}}^{\varepsilon}_{\mathpzc{t}\wedge\tau}:\mathpzc{t}\geq0\}$ is a square integrable martingale. Therefore, the process $\mathcal{M}^{\varepsilon}=\{ \mathcal{M}^{\varepsilon}_{\mathpzc{t}\wedge\tau}:\mathpzc{t}\geq0\}$ is also a square integrable  martingale, with $\mathcal{M}^{\varepsilon}_{0}=0$.   Notice that, by Doob's stopping theorem, $\E_{\tilde{x}}[ \mathcal{M}^{\varepsilon}_{\mathpzc{t}\wedge\tau}]=\E_{\tilde{x}}[ \mathcal{M}^{\varepsilon}_{0}]=0$.    Taking the expected value in  \eqref{expand1.0.0}, it follows that
\begin{align}\label{exp.1}
u^{\varepsilon}(\tilde{x})&=\E_{\tilde{x}}\biggl[\expo^{-q[\mathpzc{t}\wedge\tau]} u^{\varepsilon}(X_{t\wedge\tau})+\int_{0}^{\mathpzc{t}\wedge\tau}\expo^{-q\mathpzc{s}} [[q-\Gamma_{1}] u^{\varepsilon}(X_{\mathpzc{s}})+\langle\deri^{1} u^{\varepsilon}(X_{\mathpzc{s}}),\dot{\zeta}_{\mathpzc{s}}n_{\mathpzc{s}}\rangle]\der \mathpzc{s}\biggr].
\end{align}
From \eqref{NPIDD.1} and inequality  $\langle\gamma,y\rangle\leq\psi_{\varepsilon}(|\gamma|^{2}-g   (x)^{2})+l_{\varepsilon}(x,y)$, we have 
\begin{align}\label{expand1.0.1}
u^{\varepsilon}(\tilde{x})&\leq\E_{\tilde{x}}\biggl[\expo^{-q[\mathpzc{t}\wedge\tau]} u^{\varepsilon}(X_{\mathpzc{t}\wedge\tau})+\int_{0}^{\tau}\expo^{-q\mathpzc{s}}[ h (X_{\mathpzc{s}})+l_{\varepsilon}(X_{\mathpzc{s}},\dot{\zeta}_{\mathpzc{s}}n_{\mathpzc{s}})]\der \mathpzc{s}\biggr],
\end{align}
since $h+l_{\varepsilon}\geq0$. Observe that 
\begin{align*}
 \E_{\tilde{x}}[\expo^{-q[\mathpzc{t}\wedge\tau]} u^{\varepsilon}(X_{\mathpzc{t}\wedge\tau})]&=\E_{\tilde{x}}[\expo^{-q[\mathpzc{t}\wedge\tau]} u^{\varepsilon}(X_{\mathpzc{t}\wedge\tau})\uno_{\{\tau<\infty\}}]+\E_{\tilde{x}}[\expo^{-q\mathpzc{t}} u^{\varepsilon}(X_{\mathpzc{t}})\uno_{\{\tau=\infty\}}].
\end{align*}
On the event $\{\tau<\infty\}$, we have  $\lim_{t\rightarrow\infty}\expo^{-q[\mathpzc{t}\wedge\tau]} u^{\varepsilon}(X_{\mathpzc{t}\wedge\tau})=\expo^{-q\tau} u^{\varepsilon}(X_{\tau})=0$, since $u^{\varepsilon}=0$ on $\overline{\set}_{\inted}\setminus\set$, and by Lemma \ref{Lb1}, $0\leq \expo^{-q[\mathpzc{t}\wedge\tau]} u^{\varepsilon}(X_{\mathpzc{t}\wedge\tau})\le C_{1}$ for all $t\geq0$. Then, by Dominated Convergence Theorem, we see 
$\E_{\tilde{x}}[\expo^{-q[\mathpzc{t}\wedge\tau]}u^{\varepsilon}(X_{\mathpzc{t}\wedge\tau})\uno_{\{\tau<\infty\}}]\underset{\mathpzc{t}\rightarrow\infty}{\longrightarrow}0$. Now, on $\{\tau=\infty\}$, we observe that $\expo^{-q\mathpzc{t}}\underset{\mathpzc{t}\rightarrow\infty}{\longrightarrow}0$ and $X_{\mathpzc{t}}\in \set$, for all $ \mathpzc{t}>0$. Since $u^{\varepsilon}$ is a bounded continuous function on $\overline{\set}$, we have that $\E_{\tilde{x}}[\expo^{-q\mathpzc{t}} u^{\varepsilon}(X_{\mathpzc{t}})\uno_{\{\tau=\infty\}}]\leq C_{1}\expo^{-q\mathpzc{t}}\underset{\mathpzc{t}\rightarrow\infty}{\longrightarrow}0$. Then, 
\begin{equation}\label{Eqc.2}
\E_{\tilde{x}}[\expo^{-q[\mathpzc{t}\wedge\tau]} u^{\varepsilon}(X_{\mathpzc{t}\wedge\tau})]\underset{\mathpzc{t}\rightarrow\infty}{\longrightarrow}0.
\end{equation}
Therefore, from here and letting $\mathpzc{t}\rightarrow\infty$ in \eqref{expand1.0.1}, it yields $u^{\varepsilon}\leq\mathcal{V}_{n,\zeta}$ on $\overline{\set}$. Let $X^{\varepsilon,*}$ be the solution process to the SDE \eqref{esd3.1.0}, with control $(n^{\varepsilon,*},\zeta^{\varepsilon,*})$ given in \eqref{opt.1}--\eqref{opt.2}. Proceeding in a similar way that in \eqref{exp.1} and noting that the supremum  of $l_{\varepsilon}(x,\eta)$ is attained if $\gamma$ is related to $\eta$ by $\eta=2\psi'_{\varepsilon}(|\gamma|^{2}-g(x)^{2})\gamma$, i.e., 
$$l_{\varepsilon}(x,2\psi'_{\varepsilon}(|\gamma|^{2}- g   (x)^{2})\gamma)=2\psi'_{\varepsilon}(|\gamma|^{2}- g   (x)^{2})|\gamma|^{2}-\psi_{\varepsilon}(|\gamma|^{2}- g   (x)^{2}),$$ 
it follows that
\begin{align}\label{Eqc.1}
u^{\varepsilon}(\tilde{x})&=\E_{\tilde{x}}\biggl[\expo^{-q[\mathpzc{t}\wedge\tau^{*}_{\varepsilon}]} u^{\varepsilon}(X^{\varepsilon,*}_{\mathpzc{t}\wedge\tau^{*}_{\varepsilon}})+\int_{0}^{\mathpzc{t}\wedge\tau^{*}_{\varepsilon}}\expo^{-q\mathpzc{s}} [ h (X^{\varepsilon,*}_{\mathpzc{s}})+l_{\varepsilon}(X^{\varepsilon,*}_{\mathpzc{s}},n^{\varepsilon,*}_{\mathpzc{s}}\dot{\zeta}^{\varepsilon,*}_{\mathpzc{s}})]\der \mathpzc{s}\biggr],
\end{align}
with $\tau^{*}_{\varepsilon}=\inf\{\mathpzc{t}:X^{\varepsilon,*}_{\mathpzc{t}}\notin\set\}$. Notice that 
\begin{equation*}
\int_{0}^{\mathpzc{t}\wedge\tau^{*}_{\varepsilon}}\expo^{-q\mathpzc{s}} [ h (X^{\varepsilon,*}_{\mathpzc{s}})+l_{\varepsilon}(X^{\varepsilon,*}_{\mathpzc{s}},n^{\varepsilon,*}_{\mathpzc{s}}\dot{\zeta}^{\varepsilon,*}_{\mathpzc{s}})]\der \mathpzc{s}\uparrow\int_{0}^{\tau^{*}_{\varepsilon}}\expo^{-q\mathpzc{s}} [ h (X^{\varepsilon,*}_{\mathpzc{s}})+l_{\varepsilon}(X^{\varepsilon,*}_{\mathpzc{s}},n^{\varepsilon,*}_{\mathpzc{s}}\dot{\zeta}^{\varepsilon,*}_{\mathpzc{s}})]\der \mathpzc{s},
\end{equation*}
as $\mathpzc{t}\rightarrow\infty$,  since $h+l_{\varepsilon}\geq0$. Then, by Monotone Convergence Theorem, 
\begin{equation}\label{Eqc.3}
\E_{\tilde{x}}\biggl[\int_{0}^{\mathpzc{t}\wedge\tau^{*}_{\varepsilon}}\expo^{-q\mathpzc{s}} [ h (X^{\varepsilon,*}_{\mathpzc{s}})+l_{\varepsilon}(X^{\varepsilon,*}_{\mathpzc{s}},n^{\varepsilon,*}_{\mathpzc{s}}\dot{\zeta}^{\varepsilon,*}_{\mathpzc{s}})]\der \mathpzc{s}\biggr]\underset{\mathpzc{t}\rightarrow\infty}{\longrightarrow}\mathcal{V}_{n^{\varepsilon,*},\zeta^{\varepsilon,*}}(\tilde{x}).
\end{equation}
Letting $\mathpzc{t}\rightarrow\infty$ in \eqref{Eqc.1} and using \eqref{Eqc.2}, \eqref{Eqc.3}, we conclude 
$u^{\varepsilon}=\mathcal{V}_{n^{\varepsilon,*},\zeta^{\varepsilon,*}}= V^{\varepsilon}$ on $\overline{\set}$.
\end{proof}

To finalize, we present the proof of the main result given in Subsection \ref{Prob.1}.
\begin{proof}[Proof of Proposition \ref{veri1}] 
By Subsection \ref{proofHJB1} and Corollaries \ref{NPIDD.1.2}, \ref{nc1}, we have that there exists a non-increasing sub-sequence $\{u^{\varepsilon_{\kappa}}\}_{\kappa\geq0}$ of $\{u^{\varepsilon}\}_{\varepsilon\in(0,1)}$  such that for each $\kappa\geq0$, $u^{\varepsilon_{\kappa}}$ is the unique non-negative solution to the NPIDD problem \eqref{NPIDD.1.2}, with $\varepsilon=\varepsilon_{\kappa}$, and
\begin{equation*}
u^{\varepsilon_{\kappa}}\underset{\varepsilon_{\kappa}\rightarrow0}{\longrightarrow}u\ \text{in}\ \hol(\overline{\set}_{\inted}),\ \partial_{i}u^{\varepsilon_{\kappa}}\underset{\varepsilon_{\kappa}\rightarrow0}{\longrightarrow}\partial_{i}u\ \text{in}\ \hol_{\loc}(\set),\ \partial_{ij}u^{\varepsilon_{\kappa}}\underset{\varepsilon_{\kappa}\rightarrow0}{\longrightarrow}\partial_{ij}u\  \text{weakly  in}\ \Lp^{p}_{\loc}(\set),
  \end{equation*}
where $p\in(d,\infty)$ is fixed and $u$ is the unique non-negative solution to the HJB equation \eqref{esd5}. Also, from Lemma \ref{convexu1.0}, we know that $u^{\varepsilon_{\kappa}}=\mathcal{V}_{n^{\varepsilon_{\kappa},*},\zeta^{\varepsilon_{\kappa},*}}= V^{\varepsilon_{\kappa}}$ on $\overline{\set}$, with $(n^{\varepsilon_{\kappa},*},\zeta^{\varepsilon_{\kappa},*})$ as in \eqref{opt.1}--\eqref{opt.2}. Notice that $l_{\varepsilon}(x,\beta\gamma)\geq\langle\beta\gamma, g   (x)\gamma\rangle-\psi_{\varepsilon}(| g   (x)\gamma|^{2}-[ g   (x)]^{2})=\beta g   (x)$, with $\beta\in\R$ and $\gamma\in\R^{d}$ a unit vector. Then, from here and considering $X^{\varepsilon_{\kappa},*}$ as in \eqref{esd3.1.0}, it follows that 
\begin{align}\label{NPIDD1.0}
V(\tilde{x})\leq V_{n^{\varepsilon_{\kappa},*},\zeta^{\varepsilon_{\kappa},*}}(\tilde{x})
&=\E_{\tilde{x}}\biggl[\int_{0}^{\tau^{*}_{\varepsilon_{\kappa}}}\expo^{-q\mathpzc{t}}[  h (X^{\varepsilon_{\kappa},*}_{\mathpzc{t}})+\dot{\zeta}^{\varepsilon_{\kappa},*}_{\mathpzc{t}} g   (X^{\varepsilon_{\kappa},*}_{\mathpzc{t}})]\der \mathpzc{t}\biggr]\notag\\
&\leq \E_{\tilde{x}}\biggl[\int_{0}^{\tau^{*}_{\varepsilon_{\kappa}}}\expo^{-q\mathpzc{t}}[  h (X_{\mathpzc{t}}^{\varepsilon_{\kappa},*})+l_{\varepsilon}(X^{\varepsilon_{\kappa},*}_{\mathpzc{t}},\dot{\zeta}^{\varepsilon_{\kappa},*}_{\mathpzc{t}}n^{\varepsilon_{\kappa},*}_{\mathpzc{t}})]\der \mathpzc{t}\biggr]=u^{\varepsilon_{\kappa}}(\tilde{x}),
\end{align}
where $\tau^{*}_{\varepsilon}=\inf\{ \mathpzc{t}>0:X^{\varepsilon,*}_{\mathpzc{t}}\notin\set\}$. Recall that $V_{n^{\varepsilon_{\kappa},*},\zeta^{\varepsilon_{\kappa},*}}$ is the cost function given in \eqref{esd1.1} corresponding to the control $(n^{\varepsilon_{\kappa},*},\zeta^{\varepsilon_{\kappa},*})$, and note that, for this control, the second term in the RHS  of \eqref{esd.1} is zero, since $\zeta^{\varepsilon_{\kappa},*}$  has the continuous part only.  Letting $\varepsilon_{\kappa}\rightarrow0$ in \eqref{NPIDD1.0}, it yields $V\leq u$ on $\overline{\set}$.  Let $X$ be the process that evolves as in  \eqref{esd3.1} and $\tau=\inf\{t>0: X_{\mathpzc{t}}\notin\set\}$, with  $(n,\zeta)\in\mathcal{U}$.  Define $\tau_{m}=$
$\inf\{ \mathpzc{t}>0:X_{\mathpzc{t}}\notin\set_{m}\}$ and $\set_{m}\eqdef\{x\in\set:\dist(x,\partial\set )>1/m\}$, where $m$ is a positive integer  {large enough}. Replacing $\varepsilon$, $\tau$ by $\varepsilon_{\kappa}$, $\tau_{m}$ in \eqref{expand.1}, respectively, using integration by parts and It\^o's formula for $\expo^{-q[\mathpzc{t}\wedge\tau_{m}]}u^{\varepsilon_{\kappa}}(X_{\mathpzc{t}\wedge\tau_{m}})$, it can be verified that  \eqref{expand.1} holds for this case. Notice that  
\begin{align}\label{Ju.1}
\int_{0}^{\mathpzc{t}\wedge\tau_{m}}\expo^{-q\mathpzc{s}}\langle\deri^{1} u^{\varepsilon_{\kappa}}(X_{\mathpzc{s}-}),n_{\mathpzc{s}}\rangle\der\zeta_{\mathpzc{s}}&=\int_{0}^{\mathpzc{t}\wedge\tau_{m}}\expo^{-q\mathpzc{s}}\langle\deri^{1} u^{\varepsilon_{\kappa}}(X_{\mathpzc{s}-}),n_{\mathpzc{s}}\rangle\der\zeta^{\comp}_{\mathpzc{s}}\notag\\
&\quad+\sum_{0\leq\mathpzc{s}\leq\mathpzc{t}\wedge\tau_{m}}\expo^{-q\mathpzc{s}}\langle\deri^{1} u^{\varepsilon_{\kappa}}(X_{\mathpzc{s}-}),n_{\mathpzc{s}}\rangle\Delta\zeta_{\mathpzc{s}},
\end{align}
where $\zeta^{\comp}$ denotes the continuous part of $\zeta$.  {Meanwhile}, since $\Delta X_{\mathpzc{t}}=\Delta  J_{\mathpzc{t}}-n_{\mathpzc{t}}\Delta\zeta_{\mathpzc{t}}$, it can be verified 
\begin{align}\label{Ju.2}
&-\sum_{0\leq\mathpzc{s}\leq\mathpzc{t}\wedge\tau_{m}}\expo^{-q\mathpzc{s}}[u^{\varepsilon_{\kappa}}(X_{\mathpzc{s}-}+\Delta  J_{\mathpzc{s}}-n_{\mathpzc{s}}\Delta\zeta_{\mathpzc{s}})- u^{\varepsilon_{\kappa}}(X_{\mathpzc{s}-}+\Delta J_{\mathpzc{s}})\notag\\
&\qquad\quad\qquad\qquad+u^{\varepsilon_{\kappa}}(X_{\mathpzc{s}-}+\Delta J_{\mathpzc{s}})-u^{\varepsilon_{\kappa}}(X_{\mathpzc{s}-})-\langle\deri^{1}u^{\varepsilon_{\kappa}}(X_{\mathpzc{s}-}),[\Delta  J_{\mathpzc{s}}-n_{\mathpzc{s}}\Delta\zeta_{\mathpzc{s}}]\rangle]\notag\\
&\quad=-\int_{0}^{\mathpzc{t}\wedge\tau_{m}}\int_{\mathcal{S}}\expo^{-q\mathpzc{s}}[u^{\varepsilon_{\kappa}}(X_{\mathpzc{s}-}+z)-u^{\varepsilon_{\kappa}}(X_{\mathpzc{s}-})-\langle\deri^{1}u^{\varepsilon_{\kappa}}(X_{\mathpzc{s}-}),z\rangle]\uno_{\{\rho\in[0,s(X_{\mathpzc{s}},z)]\}}N(\der\rho,\der z,\der\mathpzc{s})\notag\\
&\qquad-\sum_{0\leq\mathpzc{s}\leq\mathpzc{t}\wedge\tau_{m}}\expo^{-q\mathpzc{s}}[\mathcal{D}[u^{\varepsilon_{\kappa}}]_{\mathpzc{s}}+\langle\deri^{1}u^{\varepsilon_{\kappa}}(X_{\mathpzc{s}-}),n_{\mathpzc{s}}\rangle\Delta\zeta_{\mathpzc{s}}]\uno_{\{\Delta\zeta_{\mathpzc{s}}\neq0\}}\notag\\
&\quad=-\widetilde{\mathcal{M}}^{\varepsilon_{\kappa}}_{\mathpzc{t}\wedge\tau_{m}}+\int_{0}^{\mathpzc{t}\wedge\tau_{m}}\expo^{-q\mathpzc{s}}\langle\deri^{1}u^{\varepsilon_{\kappa}}(X_{\mathpzc{s}-}),\der  J_{\mathpzc{s}}\rangle-\int_{0}^{\mathpzc{t}\wedge\tau_{m}}\expo^{-q\mathpzc{s}}\inte u^{\varepsilon_{\kappa}}(X_{\mathpzc{s}})\der\mathpzc{s}\notag\\
&\qquad+\int_{0}^{\mathpzc{t}\wedge\tau_{m}}\int_{ \{|z|\in(0,1)\}}\expo^{-q\mathpzc{s}}\langle\deri^{1}u^{\varepsilon_{\kappa}}(X_{\mathpzc{s}}),z\rangle s(X_{\mathpzc{s}},z)\nu(\der z)\der\mathpzc{s}\notag\\
&\qquad-\sum_{0\leq\mathpzc{s}\leq\mathpzc{t}\wedge\tau_{m}}\expo^{-q\mathpzc{s}}[\mathcal{D}[u^{\varepsilon_{\kappa}}]_{\mathpzc{s}}+\langle\deri^{1}u^{\varepsilon_{\kappa}}(X_{\mathpzc{s}-}),n_{\mathpzc{s}}\rangle\Delta\zeta_{\mathpzc{s}}]\uno_{\{\Delta\zeta_{\mathpzc{s}}\neq0\}},
\end{align}
with $\widetilde{\mathcal{M}}^{\varepsilon_{\kappa}}$ as in \eqref{M.1} and 
$\mathcal{D}[u^{\varepsilon_{\kappa}}]_{\mathpzc{s}}\eqdef [u^{\varepsilon_{\kappa}}(X_{\mathpzc{s}-}+\Delta  J_{\mathpzc{s}}-n_{\mathpzc{s}}\Delta\zeta_{\mathpzc{s}})-u^{\varepsilon_{\kappa}}(X_{\mathpzc{s}-}+\Delta  J_{\mathpzc{s}})]\uno_{\{\Delta\zeta\neq0\}}.$
Applying \eqref{Ju.1}--\eqref{Ju.2} in \eqref{expand.1}, it is easy to verify that 
\begin{align}\label{expand}
  u^{\varepsilon_{\kappa}}(\tilde{x})&=
\expo^{-q[\mathpzc{t}\wedge\tau_{m}]}u^{\varepsilon_{\kappa}}(X_{\mathpzc{t}\wedge\tau_{m}})+\int_{0}^{\mathpzc{t}\wedge\tau_{m}}\expo^{-q\mathpzc{s}}[q-\Gamma_{1}]u^{\varepsilon_{\kappa}}(X_{\mathpzc{s}})\der \mathpzc{s}\notag\\
  &\quad-\mathcal{M}^{\varepsilon_{\kappa}}_{\mathpzc{t}\wedge\tau_{m}}+\int_{0}^{\mathpzc{t}\wedge\tau_{m}}\expo^{-q\mathpzc{s}}\langle\deri^{1} u^{\varepsilon_{\kappa}}(X_{\mathpzc{s}-}),n_{\mathpzc{s}}\rangle\der\zeta_{\mathpzc{s}}^{\comp}-\sum_{0\leq\mathpzc{s}\leq\mathpzc{t}\wedge\tau_{m}}\expo^{-q\mathpzc{s}}\mathcal{D}[u^{\varepsilon_{\kappa}}]_{\mathpzc{s}},
\end{align}
where $q>0$, $\Gamma_{1} $ is as in \eqref{esd3.0},  and  $\mathcal{M}^{\varepsilon_{\kappa}}$ is the square integrable martingale given by \eqref{M.2}. From \eqref{NPIDD.1}, $[q-\Gamma_{1}]u^{\varepsilon_{\kappa}}(X_{\mathpzc{s}})\leq  h (X_{\mathpzc{s}})$, for all $\mathpzc{s}\in[0,\mathpzc{t}\wedge\tau_{m})$. Then, taking expected value in \eqref{expand},
\begin{align}\label{expand1}
u^{\varepsilon_{\kappa}}(\tilde{x})&\leq
\E_{\tilde{x}}\bigg[\expo^{-q[\mathpzc{t}\wedge\tau_{m}]}u^{\varepsilon_{\kappa}}(X_{\mathpzc{t}\wedge\tau_{m}})\notag\\
&\quad+\int_{0}^{\mathpzc{t}\wedge\tau_{m}}\expo^{-q\mathpzc{s}}[ h (X_{\mathpzc{s}})\der \mathpzc{s}+|\deri^{1} u^{\varepsilon_{\kappa}}(X_{\mathpzc{s}})|\der\zeta_{\mathpzc{s}}^{\comp}]-\sum_{0\leq\mathpzc{s}\leq\mathpzc{t}\wedge\tau_{m}}\expo^{-q\mathpzc{s}}\mathcal{D}[u^{\varepsilon_{\kappa}}]_{\mathpzc{s}}\bigg].
\end{align}
Define $g_{1}(\mathpzc{t}\wedge\tau_{m},X_{\mathpzc{t}\wedge\tau_{m}})=\sum_{0\leq\mathpzc{s}\leq\mathpzc{t}\wedge\tau_{m}}\expo^{-q\mathpzc{s}}\mathcal{D}[u]_{\mathpzc{s}}$. Then, letting $\varepsilon_{\kappa}\rightarrow0$ in \eqref{expand1}, by  Dominated Convergence Theorem, and using $u^{\varepsilon_{\kappa}}$,  $|\deri^{1}u^{\varepsilon_{\kappa}}|$ are uniformly bounded by $C_{1}$, $C_{3}$ on $\overline{\set}_{\inted}$ and $\overline{\set}_{m}$, respectively,  $u^{\varepsilon_{\kappa}}(X_{\mathpzc{s}})\underset{\varepsilon_{\kappa}\rightarrow0}{\longrightarrow}u(X_{\mathpzc{s}})$, $|\deri^{1}u^{\varepsilon_{\kappa}}(X_{\mathpzc{s}})|\underset{\varepsilon_{\kappa}\rightarrow0}{\longrightarrow}|\deri^{1}u(X_{\mathpzc{s}})|$, $\mathcal{D}[u^{\varepsilon_{\kappa}}]_{\mathpzc{s}}\underset{\varepsilon_{\kappa}\rightarrow0}{\longrightarrow}\mathcal{D}[u]_{\mathpzc{s}}$ for $\mathpzc{s}\leq \mathpzc{t}\wedge\tau_{m}$, and  $|\deri^{1}u|\leq g   $ on $\set$, it follows that
\begin{align}\label{expand1.1}
u(\tilde{x})&\leq
\E_{\tilde{x}}\bigg[\expo^{-q[\mathpzc{t}\wedge\tau_{m}]}u(X_{\mathpzc{t}\wedge\tau_{m}})+\int_{0}^{\mathpzc{t}\wedge\tau_{m}}\expo^{-q\mathpzc{s}}[ h (X_{\mathpzc{s}})\der \mathpzc{s}+ g   (X_{\mathpzc{s}})\der\zeta_{\mathpzc{s}}^{\comp}]\bigg]-\E_{\tilde{x}}[g_{1}(\mathpzc{t}\wedge\tau_{m},X_{\mathpzc{t}\wedge\tau_{m}})].
\end{align}
By similar arguments used in \eqref{Eqc.2} and \eqref{Eqc.3}, and noting that $\tau_{m}\uparrow\tau$ as $m\rightarrow\infty$, $\Pro_{\tilde{x}}$-a.s., it can be verified that 
\begin{multline}\label{expand1.2}
\lim_{t\rightarrow\infty}\lim_{m\rightarrow\infty}\E_{\tilde{x}}\bigg[\expo^{-q[\mathpzc{t}\wedge\tau_{m}]}u(X_{\mathpzc{t}\wedge\tau_{m}})+\int_{0}^{\mathpzc{t}\wedge\tau_{m}}\expo^{-q\mathpzc{s}}[ h (X_{\mathpzc{s}})\der \mathpzc{s}+ g   (X_{\mathpzc{s}})\der\zeta_{\mathpzc{s}}^{\comp}]\bigg]\\
=\E_{\tilde{x}}\bigg[\int_{0}^{\tau}\expo^{-q\mathpzc{s}}[ h (X_{\mathpzc{s}})\der \mathpzc{s}+ g   (X_{\mathpzc{s}})\der\zeta_{\mathpzc{s}}^{\comp}]\bigg].
\end{multline}
On the event $\{\tau=\infty\}$, we have that for each $\mathpzc{s}>0$ such that $\Delta \zeta _{\mathpzc{s}}\neq0$,   $X_{\mathpzc{s}-}+\Delta  J_{\mathpzc{s}}-n_{\mathpzc{s}}\Delta\zeta_{\mathpzc{s}}\in\set$.  {Meanwhile},  $X_{\mathpzc{s}-}+\Delta  J_{\mathpzc{s}}\in \set$ or $X_{\mathpzc{s}-}+\Delta  J_{\mathpzc{s}}\in\set_{\inted}\setminus\set$. If $X_{\mathpzc{s}-}+\Delta  J_{\mathpzc{s}}\in \set$, by Mean Value Theorem, 
\begin{align}\label{esd14}
-\mathcal{D}[u]_{\mathpzc{s}}&\leq\Delta\zeta_{\mathpzc{s}}\int_{0}^{1}|\deri^{1} u( X_{\mathpzc{s}-}+\Delta  J_{\mathpzc{s}}-\lambda n_{\mathpzc{s}}\Delta\zeta_{\mathpzc{s}})|\der\lambda\leq\Delta\zeta_{\mathpzc{s}}\int_{0}^{1} g   ( X_{\mathpzc{s}-}+\Delta  J_{\mathpzc{s}}-\lambda  n_{\mathpzc{s}}\Delta\zeta_{\mathpzc{s}})\der\lambda,
\end{align}
since $|\deri^{1}u|\leq  g$ in $\set$. If $X_{\mathpzc{s}-}+\Delta  J_{\mathpzc{s}}\in\set_{\inted}\setminus\set$,  { we have that the line segment between  $X_{\mathpzc{s}-}+\Delta  J_{\mathpzc{s}}$ and $X_{\mathpzc{s}-}+\Delta  J_{\mathpzc{s}}-n_{\mathpzc{s}}\Delta\zeta_{\mathpzc{s}}$, that is described by $X_{\mathpzc{s}-}+\Delta J_{\mathpzc{s}}-\lambda n_{\mathpzc{s}}\Delta \zeta_{\mathpzc{s}}$, with $\lambda\in[0,1]$,  intersects $\partial\set$ in a unique point $X^{*}_{\mathpzc{s}}\eqdef X_{\mathpzc{s}-}+\Delta J_{\mathpzc{s}}-\lambda^{*} n_{\mathpzc{s}}\Delta \zeta_{\mathpzc{s}}$, for some $\lambda^{*}\in(0,1)$, since $\set$ is convex.  Then, noting that $X_{\mathpzc{s}-}+\Delta  J_{\mathpzc{s}}-n_{\mathpzc{s}}\Delta\zeta_{\mathpzc{s}}-X^{*}_{\mathpzc{s}}=-[1-\lambda^{*}]n_{\mathpzc{s}}\Delta\zeta_{\mathpzc{s}}$, and using again Mean Value Theorem and the fact that $u(X^{*}_{\mathpzc{s}})=0$,} 
\begin{align}\label{expand9.1}
{-u(X_{\mathpzc{s}-}+\Delta  J_{\mathpzc{s}}-n_{\mathpzc{s}}\Delta\zeta_{\mathpzc{s}})}&={-[u(X_{\mathpzc{s}-}+\Delta  J_{\mathpzc{s}}-n_{\mathpzc{s}}\Delta\zeta_{\mathpzc{s}})-u(X^{*}_{\mathpzc{s}})]}\notag\\
&\leq[1-\lambda^{*}]\Delta\zeta_{\mathpzc{s}}\int_{0}^{1} g   (X^{*}_{\mathpzc{s}}-\lambda[1-\lambda^ {*}]n_{\mathpzc{s}}\Delta\zeta_{\mathpzc{s}})\der\lambda.
\end{align}
Meanwhile, observe that
\begin{align*}
{\int_{\lambda^{*}}^{1}g(X_{\mathpzc{s}-}+\Delta  J_{\mathpzc{s}}-\lambda  n_{\mathpzc{s}}\Delta\zeta_{\mathpzc{s}})\der\lambda=[1-\lambda^{*}]\int_{0}^{1}g(X^{*}_{\mathpzc{s}}-\lambda[1-\lambda^ {*}]n_{\mathpzc{s}}\Delta\zeta_{\mathpzc{s}})\der\lambda.}
\end{align*}
Then, from here,  it is easy to verify 
\begin{equation}\label{expand9.2}
[1-\lambda^{*}]\int_{0}^{1}g(X^{*}_{\mathpzc{s}}-\lambda[1-\lambda^ {*}]n_{\mathpzc{s}}\Delta\zeta_{\mathpzc{s}})\der\lambda\leq \int_{0}^{1} g   ( X_{\mathpzc{s}-}+\Delta  J_{\mathpzc{s}}-\lambda  n_{\mathpzc{s}}\Delta\zeta_{\mathpzc{s}})\der\lambda.
\end{equation}
Therefore, by \eqref{expand9.1}--\eqref{expand9.2} and that $u(X_{\mathpzc{s}-}+\Delta  J_{\mathpzc{s}})=0$, it yields \eqref{esd14}. From here and by Monotone Convergence Theorem, we have
\begin{align}\label{ineq.1.1}
\lim_{t\rightarrow\infty}&\lim_{m\rightarrow\infty}\E_{\tilde{x}}[-g_{1}(\mathpzc{t}\wedge\tau_{m},X_{\mathpzc{t}\wedge\tau_{m}})\uno_{\{\tau=\infty\}}]\notag\\
 &\leq\lim_{t\rightarrow\infty}\lim_{m\rightarrow\infty}\E_{\tilde{x}}\biggr[\uno_{\{\tau=\infty\}}\sum_{0\leq\mathpzc{s}\leq\mathpzc{t}\wedge\tau_{m}}\expo^{-q\mathpzc{s}}\Delta\zeta_{\mathpzc{s}}\int_{0}^{1} g   ( X_{\mathpzc{s}-}+\Delta  J_{\mathpzc{s}}-\lambda n_{\mathpzc{s}}\Delta\zeta_{\mathpzc{s}})\der\lambda\biggl]\notag\\
 &\leq\E_{\tilde{x}}\biggr[\uno_{\{\tau=\infty\}}\sum_{\mathpzc{s}\geq0}\expo^{-q\mathpzc{s}}\Delta\zeta_{\mathpzc{s}}\int_{0}^{1} g   ( X_{\mathpzc{s}-}+\Delta  J_{\mathpzc{s}}-\lambda n_{\mathpzc{s}}\Delta\zeta_{\mathpzc{s}})\der\lambda\biggl].
\end{align}
Now,  on $\{\tau<\infty\}$, {for $\mathpzc{s}<\tau$ we can use the same arguments as for the case $\{\tau=\infty\}$; for $\mathpzc{s}=\tau$ we have $X_{\tau-}+\Delta  J_{\tau}- n_{\tau}\Delta\zeta_{\tau}\in\overline{\set}_{\inted}\setminus\set$ and either $X_{\tau-}+\Delta  J_{\tau}\in\set$ or $X_{\tau-}+\Delta  J_{\tau}\in\overline{\set}_{\inted}\setminus\set$, then similar arguments as before apply}. Then,  {by using the} Monotone Convergence Theorem, we have 
\begin{multline}\label{expand10}
\lim_{t\rightarrow\infty}\lim_{m\rightarrow\infty}\E_{\tilde{x}}[-g_{1}(\mathpzc{t}\wedge\tau_{m},X_{\mathpzc{t}\wedge\tau_{m}})\uno_{\{\tau<\infty\}}]\\
\leq\E_{\tilde{x}}\biggr[\uno_{\{\tau<\infty\}}\sum_{0\leq\mathpzc{s}\leq\tau}\expo^{-q\mathpzc{s}}\Delta\zeta_{\mathpzc{s}}\int_{0}^{1} g   ( X_{\mathpzc{s}-}+\Delta  J_{\mathpzc{s}}-\lambda n_{\mathpzc{s}}\Delta\zeta_{\mathpzc{s}})\der\lambda\biggl].
\end{multline}
Therefore, letting $m\rightarrow\infty$ and $t\rightarrow\infty$ in \eqref{expand1.1}, and using \eqref{esd.1}, \eqref{expand1.2}, \eqref{ineq.1.1} and \eqref{expand10}, it yields $u\leq V_{n,\zeta}$ on $\overline{\set}$. From here and \eqref{vf1},  $u\leq V$ on $\overline{\set}$. By the  arguments seen previously, we conclude that $u=V$ on $\overline{\set}$.
\end{proof}

\subsection{About penalized optimal controls}
As  {discussed} previously, the value function $V$, given in \eqref{vf1}, satisfies the HJB \eqref{esd5}. This means that the domain set $\set$ is divided into two parts. The first part, {defined as} $\mathcal{E}\subseteq\set$, is where $V$  satisfies the elliptic integro-differential equation $[q-\Gamma_{1}] V=h$, which suggests that the optimal control corresponding to this problem will not be exercised on $\mathcal{E}$. Otherwise the `optimal control' will exercise a force and a direction  at $x\in\set\setminus\mathcal{E}$  in such a way that the process $ X^{n,\zeta}_{\mathpzc{t}}$ will be pushed back to some point $y \in\partial\mathcal{E}$.

To construct an optimal strategy to the problem \eqref{vf1}, it is  {necessary} to verify that $\partial\mathcal{E}$ is at least of class $\hol^{1}$, which is not easy to get; this  is  {currently}  the topic of a work in progress by the authors.  In the literature, we can find problems of this type  {that have been} successfully solved in some cases; see, e.g., \cite{app,A1999,BKY2014,DZ1998,GZ2015,JJZ2008,kruk,soner}.   

Another way to address the problem \eqref{vf1} is by means of $\varepsilon$-penalized optimal controls which have been  constructed in \eqref{opt.1}--\eqref{opt.2}. From Lemma \ref{convexu1.0} and proof of Lemma \ref{veri1}, we know that $V^{\varepsilon_{\kappa}}\downarrow V$ as $\varepsilon_{\kappa}\downarrow0$ on $\set$, and 
$V\leq V_{n^{\varepsilon_{\kappa},*},\zeta^{\varepsilon_{\kappa},*}}\leq V^{\varepsilon_{\kappa}}\ \text{on $\overline{\set}$}$,
 where $(n^{\varepsilon_{\kappa},*},\zeta^{\varepsilon_{\kappa},*})$ as in \eqref{opt.1}--\eqref{opt.2}, and  $V_{n^{\varepsilon_{\kappa},*},\zeta^{\varepsilon_{\kappa},*}},V, V^{\varepsilon_{\kappa}}$ are given by \eqref{esd1.1}, \eqref{vf1} and \eqref{Vfp1}, respectively. Taking $\varepsilon_{\kappa}$ small enough, we have that the control $(n^{\varepsilon_{\kappa},*},\zeta^{\varepsilon_{\kappa},*})$ is exercised as follows:  if the controlled process $X^{ \varepsilon_{\kappa},*}$ satisfies $|\deri^{1}V^{\varepsilon_{\kappa}}(X^{\varepsilon_{\kappa},*}_{\mathpzc{t}})|\leq g(X^{\varepsilon_{\kappa},*}_{\mathpzc{t}})$ with $\mathpzc{t}\in[0,\tau^{*}_{\varepsilon_{\kappa}}]$ and $\tau^{*}_{\varepsilon_{\kappa}}=\inf\{\mathpzc{t}>0:X^{\varepsilon_{\kappa},*}_{\mathpzc{t}}\notin\set\}$,  then $\zeta^{\varepsilon_{\kappa},*}_{\mathpzc{t}}\equiv0$ and $X^{\varepsilon_{\kappa},*}_{\mathpzc{t}}$ will stay in $\mathcal{E}$. If $0<|\deri^{1}V^{\varepsilon_{\kappa}}(X^{\varepsilon_{\kappa},*}_{\mathpzc{t}})|^{2}-g(X^{\varepsilon_{\kappa},*}_{\mathpzc{t}})^{2}<2\varepsilon_{\kappa}$ with $\mathpzc{t}\in[0,\tau^{*}_{\varepsilon_{\kappa}}]$, the process $X^{\varepsilon_{\kappa},*}_{\mathpzc{t}}$ will be crossing  $\partial\mathcal{E}$ persistently. Otherwise, $(n^{\varepsilon_{\kappa},*}_{\mathpzc{t}},\zeta^{\varepsilon_{\kappa},*}_{\mathpzc{t}})$   will exercise  a force $\frac{2}{\varepsilon}|\deri^{1}V^{\varepsilon_{\kappa}}(X^{\varepsilon_{\kappa},*}_{\mathpzc{t}})|$ and a direction $-\frac{\deri^{1}V^{\varepsilon_{\kappa}}(X^{\varepsilon_{\kappa},*}_{\mathpzc{t}})}{|\deri^{1}V^{\varepsilon_{\kappa}}(X^{\varepsilon_{\kappa},*}_{\mathpzc{t}})|}$ at $X^{\varepsilon_{\kappa},*}_{\mathpzc{t}}$ in such a way that it will be pushed back to  $\partial\mathcal{E}$.  

To finalize this section,  Zhu \cite{zhu} also solved a similar  problem by means of $\varepsilon$-penalized optimal controls   when the state process is a multidimensional diffusion process.

\section{Conclusions  and some further work}
In this paper   we have guaranteed, under Assumptions (A1)--(A4),  the existence and uniqueness for the strong (in the a.e. sense) and classical solutions to the HJB and NPIDD equations presented in \eqref{p1} and \eqref{p13.0}, respectively. It should be noted that one of main contributions of this work is Assumption (A4),  {which} permits the L\'evy measure $\nu$ to be  infinite on $\R^{d}_{*}$. This assumption also played an important role in the proofs of Lemmas \ref{lemfrontera1.0} and \ref{cotaphi}. 

Another main result achieved in this paper   is  {the establishment of a }  the strong relationship between the value functions $V$, $V^{\varepsilon}$  given in \eqref{vf1}, \eqref{Vfp1},  and the solutions $u$, $u^{\varepsilon}$ to the equations \eqref{esd5}, \eqref{NPIDD.1}, respectively. Although, the optimal control process for the singular stochastic problem   \eqref{vf1} was not given, and this is still an open problem, we constructed a family of $\varepsilon$-optimal  absolutely continuous control processes $\{(n^{\varepsilon_{\kappa},*},\zeta^{\varepsilon_{\kappa},*})\}_{\kappa\geq1}$; see \eqref{opt.1}--\eqref{opt.2}, such that the limit of their value functions $V^{\varepsilon_{\kappa}}$ (as $\varepsilon_{\kappa}\rightarrow0$) agrees with the value function $V$.

There are some extensions to be considered and directions for future research:

\begin{enumerate}[(i)]
	\item One {of the natural extensions of this work would be} to study the HJB and NPIDD equations \eqref{p1} and \eqref{p13.0}, respectively, when the integral operator $\inted w$ has the form $\int_{\R^{d}_{*}}[w(\cdot+z)-w-\langle\deri^{1}w,z\rangle\uno_{\{|z|\in(0,1)\}}]s(\cdot,z)\nu(\der z),$ and the L\'evy measure $\nu$ has unbounded variation, i.e., $\int_{\R^{d}_{*}}[|z|^{2}\wedge1]\nu(\der z)<\infty$. In this case, the main difficulty {lies in obtaining} results  similar to Lemmas \ref{lemfrontera1.0} and \ref{cotaphi}  {because} we must have an \textit{\`a priori} estimate  of $\int_{\{|z|\in(0,1)\}}\big[\int_{0}^{1} |\deri^{2}u^{\varepsilon}(\cdot+tz)|\der t\big]|z|^{2}s(\cdot,z)\nu(\der z)$ independent of $\varepsilon$.
	
	\item Another extension is to generalize the gradient constraint that appears in \eqref{p1}, i.e., to study the HJB equation presented  in works as \cite{hynd2} or \cite{Hynd3}, when the operator is a partial integro-differential operator as in \eqref{p6.1}.  
	
	\item In parallel to this research,  the stochastic control problems in different branches of applied probability (insurances, inventories, etc.),   {which are} closely related to these  HJB equations,  {may} be analyzed.  
\end{enumerate}

\subsection*{Acknowledgement}
The authors would like to thank  the anonymous reviewers for their comments and suggestions, which improved the quality of this paper.

\end{document}